\title{Rectification of Weak Product Algebras over an Operad in $\Cat$ and $\Top$ and Applications}
\author[Fiedorowicz]{Z. Fiedorowicz}
\address{Department of Mathematics, The Ohio State University\\ Columbus, OH 43210-1174, USA}
\email{fiedorow@math.ohio-state.edu}
\urladdr{http://www.math.ohio-state.edu/people/fiedorow/view}
\author[Stelzer]{M. Stelzer}
\address{Universit\"at Osnabr\"uck, Fachbereich Mathematik/Informatik\\ Albrechtstr. 28a, 49069 Osnabr\"uck, Germany}
\email{mstelzer@mathematik.uni-osnabrueck.de}
\author[Vogt]{R.M.~Vogt}
\address{Universit\"at Osnabr\"uck, Fachbereich Mathematik/Informatik\\ Albrechtstr. 28a, 49069 Osnabr\"uck, Germany}
\email{rainer@mathematik.uni-osnabrueck.de}
\urladdr{http://www.mathematik.uni-osnabrueck.de/staff/phpages/vogtr.rdf.shtml}
\newtheorem{prop}{Proposition}[section]
\newtheorem{theo}[prop]{Theorem}
\newtheorem{lem}[prop]{Lemma}
\newtheorem{coro}[prop]{Corollary}
\theoremstyle{definition}
\newtheorem{defi}[prop]{Definition}
\newtheorem{leer}[prop]{}
\newtheorem{rema}[prop]{Remark}
\def\scO{\mathcal{O}}
\def\hscO{\widehat{\mathcal{O}}}
\def\scP{\mathcal{P}}
\def\scC{\mathcal{C}}
\def\scD{\mathcal{D}}
\def\scM{\mathcal{M}}
\def\scI{\mathcal{T}}
\def\scA{\mathcal{A}}
\def\scB{\mathcal{B}}
\def\scD{\mathcal{D}}
\def\scF{\mathcal{F}}
\def\scL{\mathcal{L}}
\def\scK{\mathcal{K}}
\def\scS{\mathcal{S}}
\def\scT{\mathcal{T}}
\def\scU{\mathcal{U}}
\def\scV{\mathcal{V}}
\def\mT{\mathbb{T}}
\def\tmT{\widetilde{\mathbb{T}}}
\def\mN{\mathbb{N}}
\def\hocolim{\textrm{hocolim}}
\def\Id{{\textrm{Id}}}
\def\id{{\textrm{id}}}
\def\op{{\textrm{op}}}
\def\ob{{\textrm{ob}}}
\def\proj{\textrm{proj}}
\def\product{\textrm{product}}
\def\Cat{\mathcal{C}\!at}
\def\Top{\mathcal{T}\!op}
\def\SSets{\mathcal{SS}ets}
\def\Sets{\mathcal{S}\!ets}
\def\Inj{\textrm{Inj}}
\def\In{\textrm{In}}
\def\we{\textrm{we}}
\def\const{\textrm{const}}
\def\top{\textrm{top}}
\def\cat{\textrm{cat}}
\def\Aut{\mathop{\rm Aut}}
\def\Coll{\mathop{\mathcal{C}oll}}
\def\Opr{\mathop{\mathcal{O}pr}}
\def\lambdaov{\overline{\lambda}}
\begin{document}
\begin{abstract}    
We develop an alternative to the May-Thomason construction
used to compare operad based infinite loop machines to that of Segal,
which relies on weak products. Our construction has the advantage that
it can be carried out in $Cat$, whereas their construction gives rise to
simplicial categories. As an application we show that a simplicial algebra
over a $\Sigma$-free $Cat$ operad $\scO$ is functorially weakly equivalent to a
$Cat$ algebra over $\scO$. When combined with the
results of a previous paper, this allows us to conclude that 
up to weak equivalences the category of $\scO$-categories is equivalent to the category
of $B\scO$-spaces, where $B:\Cat\to\Top$ is the classifying space functor. 
In particular,
$n$-fold loop
spaces (and more generally $E_n$ spaces) are functorially weakly equivalent
to classifying spaces of $n$-fold monoidal categories.  Another application
is a change of operads construction within $Cat$.
\end{abstract}

\maketitle

\vspace{2ex}
\section{Introduction}

In \cite{MT} May and Thomason compared infinite loop machines based on spaces with an operad
acting on them and the Segal machine which involves weakening the notion of Cartesian product
to that of a product up to equivalence.  In the process they introduced a hybrid notion of an
algebra over a category of operators and created a rectification construction to pass from this
to an equivalent space with an operad action.  Their rectification is a 2-sided bar construction,
which is simplicial in nature.
Schw\"anzl and Vogt gave an alternative comparison of the two infinite  loop space machines in \cite{SV},
 which is based on the fact that for a strong deformation retract $A\subset X$ the space of strong deformation 
retractions $X\to X$ is contractible. Both approaches do not directly translate to $\Cat$, the category 
of small categories, with realization equivalences as weak equivalences: The May-Thomason construction
would convert categories into simplicial categories, and there is no apparent candidate to replace 
the space of strong deformation retractions in the Schw\"anzl-Vogt construction.

Similarly, the change of operads construction used in \cite{May1}, if applied to operads in $\Cat$,
ends up in simplicial categories.

In this paper we offer a comparatively simple third rectification which has the advantages that 
it can be carried out in $Cat$ and that a change of operads functor based on it
 stays in $\Cat$.

Our main motivation for this paper is to realize a program started in \cite{BFSV}, where a
notion of $n$-fold monoidal category was introduced whose structure is codified by a $\Sigma$-free
operad $\scM_n$ in $\Cat$. The classifying space functor $B:\Cat\to \Top$ maps $\scM_n$ to a
topological operad $B\scM_n$, and it was shown in \cite{BFSV} that there is a topological operad $\scD$
and equivalences of operads
$$B\scM_n\leftarrow \scD \rightarrow \scC_n$$
where $\scC_n$ is the little $n$-cubes operad. A change of operads construction for topological operads
then implies that the classifying space $B\scA$ of any $n$-fold monoidal category $\scA$ is weakly equivalent to a 
$\scC_n$-space and hence to an n-fold loop space up to group completion.
It was conjectured that any $n$-fold loop space can be obtained up to equivalence in this way. 

More generally, let $\scO$ and  $\scP$ be $\Sigma$-free operads in $\Cat$ respectively $\Top$,
and let $\scO\mbox{-}\Cat$ and $\scP\mbox{-}\Top$ be their associated categories of algebras.
Taking $\mathcal{P}=B\mathcal{O}$, one might be tempted to conjecture that
the classifying space functor induces an equivalence of categories
$$
\scO\mbox{-}\Cat[\we^{-1}]\simeq B\scO\mbox{-}\Top[\we^{-1}]
$$
where $\we\subset B\scO\mbox{-}\Top$ is the class of all homomorphisms whose underlying maps are weak
homotopy equivalences and $\we\subset \scO\mbox{-}\Cat$ is the class of all homomorphisms
which are mapped to weak equivalences in $B\scO\mbox{-}\Top$. To ensure the existence of the localized
categories $B\scO\mbox{-}\Top[\we^{-1}]$ and $\scO\mbox{-}\Cat[\we^{-1}]$ we can use Grothendieck's language
of universes \cite[Appendix]{Grot}, where they exist in some higher universe.

A partial step towards a proof
was accomplished in \cite{FV}, where it was shown that the classifying space functor
followed by the topological realization functor induces an equivalence of categories
$$
\scO\mbox{-}\scS\Cat[\we^{-1}]\simeq B\scO\mbox{-}\Top[\we^{-1}]
$$ 
where $\scO\mbox{-}\scS\Cat$ is the category of simplicial $\scO$-algebras in $\Cat$ and the weak
equivalences in $\scO\mbox{-}\scS\Cat$ are those homomorphisms which are mapped to weak equivalences
in $B\scO\mbox{-}\Top$.
In particular, each $E_n$-space is up to
equivalence the classifying space of a simplicial $n$-fold monoidal category. As far as $E_n$-spaces
 are concerned the full program
was finally realized in \cite{FSV}, where a homotopy colimit construction for categories of algebras
over a $\Sigma$-free operad in $\Cat$ provided a passage from simplicial
$\scO$-algebras to $\scO$-algebras. If the morphisms of the operad $\scO$ satisfy a certain factorization
condition this passage induces an equivalence of categories
$$
\scO\mbox{-}\scS\Cat[\we^{-1}]\simeq \scO\mbox{-}\Cat[\we^{-1}]
$$
and the operads codifying $n$-fold monoidal categories, strictly associative braided monoidal categories, and permutative
categories satisfy this condition. For these operads it was also shown that there is an equivalence of categories
$$(*)\qquad\qquad\qquad\qquad\qquad
\scO\mbox{-}\Cat\widetilde{[\we^{-1}]}\simeq B\scO\mbox{-}\Top[\we^{-1}]
\qquad\qquad\qquad\qquad\qquad$$
in the foundational setting of G\"odel-Bernays, where $\scO\mbox{-}\Cat\widetilde{[\we^{-1}]}$ is a localization of 
$\scO\mbox{-}\Cat$ up to equivalence (for a definition see \cite[Def. 7.3]{FSV}).

The main application of the construction developed in this paper is the full proof of the above conjecture in the foundational
setting of G\"odel-Bernays with no restrictions on the operad $\scO$ in $\Cat$ apart from $\Sigma$-freeness. 
For the existence of the genuine localizations we utilize an observation
of Schlichtkrull and Solberg \cite[Prop. A.1]{SS}, and we thank them for communicating this to us.
As far as $E_n$-spaces are concerned, the present paper offers an alternative simpler proof, because it avoids the 
comparatively complicated homotopy colimit construction in $\scO\mbox{-}\Cat$, which is of independent interest. In particular,
it considerably simplifies the part of the proof of the main result of Thomason in \cite{Thom2} (the special case of $(*)$ 
for the operad encoding permutative categories), which relies on the homotopy colimit construction of \cite{Thom1}.

The genesis of this paper stems from a previous paper of two of the authors, \cite[\S 4]{FGV}, where a similar
problem involved the rectification of a weak monoidal structure on a category, without passing to simplicial
categories. It was observed there that the classical $M$-construction of \cite[Theorem 1.26]{BV}, used for
this kind of rectification in $\Top$, could be carried out in $\Cat$.  This led us to seek a modification of this
construction for the purpose of rectifying weak product algebras in $\Cat$.

This paper is organized as follows: In Section 2 we recall some basic notions of operads and their associated
categories of operators.  In Section 3 we recall free operad constructions and the language of trees, which
underlie our rectification constructions.  In Section 4 we construct a modification of the $M$-construction in
$\Top$ which allows weak product algebras over an operad as inputs.  In Sections 5 and 6 we recast our
modified $M$-construction as a homotopy colimit of a diagram in $\Top$.  Building upon work of Thomason \cite{T},
we then show that the Grothendieck construction on the same diagram in $\Cat$ provides the requisite rectification
of weak product algebras over an operad in $\Cat$.  The remaining sections are then devoted to various applications
of our rectification construction.

We would like to thank the referee for a careful reading of this paper and some helpful suggestions.  The first author
also wishes to acknowledge the support of the University of Osnabr\"uck in the preparation of this paper.
\section{Operads and their categories of operators}\label{operads}
For the reader's convenience we recall the notions of an operad and its associated category of operators.

Let $\scS$ be either the category $\Cat$ of small categories, or the category $\Sets$ of sets,
or the category $\SSets$ of simplicial sets, or the category $\Top$ of 
(not necessarily Hausdorff) $k$-spaces. Then $\scS$ is a self-enriched symmetric monoidal category with the product as structure
functor and the terminal object $\ast$ as unit.  In what follows, for an object $X$ in $\scS$, it will be convenient to refer to \textit{elements}
in $X$.  If $X$ is a topological space, this will mean a point in $X$.  If $X$ is a simplicial set, this will mean a simplex in $X$.  If
$X$ is a category, then this will mean either an object or morphism in $X$. We will also use the following notions of equivalence
in $\scS$.  In $\Top$ an equivalence will mean a strict homotopy equivalence.  An equivalence between simplicial sets will mean
a simplicial map whose geometric realization is a homotopy equivalence.  Lastly in $\Cat$, we will call a functor $F:\scC\to\scD$ an equivalence if it induces a homotopy equivalence on the geometric realizations of the nerves.

\begin{defi}\label{operads1}
An \textit{operad} $\scO$ in $\scS$ is a collection $\{\scO (k)\}_{k\ge 0}$ of
objects in $\scS$ equipped with symmetric group actions
$\scO (k)\times \Sigma_k\to\scO (k)$, composition maps
$$
\scO (k)\times(\scO (j_1)\times\ldots\times \scO (j_k))
\to \scO (j_1+\ldots+j_k),
$$
and a unit $\id\in\scO(1)$ satisfying the appropriate equivariance,
associativity and unitality conditions - see \cite{May1} for details.\\
An operad in $\Top$ is called well-pointed if $\{\id\}\subset \scO(1)$ is
a closed cofibration.
\end{defi}

\begin{rema}\label{operads2}
We often find it
helpful to think of an operad in the following equivalent way. An operad $\mathcal{O}$ in 
$\scS$ is an $\scS$-enriched symmetric monoidal category $(\scO,\oplus,0)$ such that 
\begin{enumerate}
\item[(i)] $ob\:\scO=\mathbb{N}$ and $m\oplus n=m+n$.
\item[(ii)] $\oplus$ is a strictly associative $\scS$-functor with strict unit 0
\item[(iii)] $\coprod\limits_{r_1+\ldots+r_n=k}\mathcal{O}(r_1,1)
\times\ldots\times \mathcal{O}(r_n,1)\times_{\Sigma_{r_1}
\times\ldots\times\Sigma_{r_n}} \Sigma_k\to\mathcal{O}(k,n)$
$$
\xymatrix{
((f_1,\ldots,f_n),\tau) \ar@{|->}[rr] 
&& (f_1\oplus\ldots\oplus f_n)\circ\tau
}$$
is an isomorphism in $\scS$. (Note in particular that $\scO(n,0)=\emptyset$ for $n>0$.  By contrast, there are no a priori
restrictions on $\scO(0,1)$.)
\end{enumerate}
In the topological case ``well-pointed'' translates to $\{ \id \}\subset \mathcal{O}(1,1)$
is a closed cofibration.

Each such category determines an operad in the sense of Definition \ref{operads1} by taking the 
collection  $\{\mathcal{O}(k,1)\}_{k\ge 0}$.  Conversely, each operad determines such a category
by property (iii).

The symmetric monoidal category associated to the trivial operad $\scC om$, which parametrizes
commutative monoid structures, can be identified with a skeletal category of unbased finite sets $\scF$.
Here we identify the natural number $n$ with the set $\{1,2,\dots,n\}$, which may be viewed as an object
in any of our categories $\scS$.  In particular we identify $0$ with the empty set.
For any operad $\scO$, the natural map $\scO\to \scC om$ induces a symmetric monoidal functor
$\epsilon:\scO\to\scF$. This functor induces an equivalence on $\scS$-enriched morphism sets
$\amalg_{m,n}\scO(m,n)\to\amalg_{m,n}\scF(m,n)$ for any $E_\infty$ operad $\scO$.  More generally
for any morphism $\phi: m\to n$ in $\scF$ and any operad $\scO$, $\epsilon^{-1}(\phi)$ is isomorphic
to the product $\prod_{i=1}^n\scO(|\phi^{-1}(i)|)$, where $|S|$ denotes the cardinality of the set $S$.
\end{rema}

\begin{defi}\label{operads3}
 Let $\mathcal{O}$ and $\mathcal{P}$ be  operads in $\scS$.
\begin{enumerate}
\item[(1)] $\mathcal{O}$ is called $\Sigma$\textit{-free} if the $\Sigma_n$-action on $\mathcal{O}(n)$
is free for each $n$ in the cases $\scS=\Cat$, $\Sets$, or $\SSets$. If $\scS=\Top$ we require that
$\mathcal{O}(n)\to\mathcal{O}(n)/\Sigma_n$ is a numerable principal
$\Sigma_n$-bundle for each $n$.
\item[(2)] An \textit{operad map} $\mathcal{O}\to\mathcal{P}$ is a collection of
equivariant maps $\mathcal{O}(n)\to\mathcal{P}(n)$ in $\scS$, compatible with the
operad structure.
\item[(3)] An $\mathcal{O}$\textit{-structure} on an object $X$ in $\scS$ is an operad map
$\mathcal{O}\to\mathcal{E}_X$ into the endomorphism operad $\mathcal{E}_X$
of $X$, which is defined by $\mathcal{E}_X(n)=\scS(X^n,X)$ with the obvious $\Sigma_n$-action and the
obvious composition maps and unit. We say that $\mathcal{O}$ \textit{acts on} $X$, or that $X$ is an
$\mathcal{O}$-algebra; if $\scS=\Top$ we also call $X$ an $\mathcal{O}$-space. \\
If we interpret an operad as a symmetric monoidal category as in Remark \ref{operads2} an $\mathcal{O}$-algebra
is the same as a strict symmetric monoidal $\scS$-functor $\scO\to\scS$ taking $n$ to $X^n$.  Here strict
monoidal means that we use the canonical isomorphisms in $\scS$ to identify $X^{m+n}$ with $X^m\times X^n$.
\item[(4)] An operad map is called a \textit{weak equivalence} if each map
$\mathcal{O}(n)\to\mathcal{P}(n)$ is an equivariant homotopy equivalence (in $\Cat$ or $\SSets$ this means that
each map is an equivariant homotopy equivalence after applying the classifying space functor, respectively the topological
realization functor). 
\item[(5)] Two operads are called \textit{equivalent} if there is a finite chain of weak
equivalences connecting them.
\end{enumerate}
\end{defi}
We denote the category of $\scO$-algebras in $\scS$ by $\scO\mbox{-}\scS$.

\begin{leer}\label{operads4}
Let $\scO$ be an operad in $\scS$, interpreted as in Remark \ref{operads2}. As is shown in \cite[Chapter II]{BV},
the symmetric monoidal category $\scO$ can be enlarged into
an $\scS$-enriched category with products $\Theta_\scO$, such that
$n=1\times 1\times\dots\times 1$.  This category $\Theta_\scO$ is called the \textit{theory} associated to
$\scO$ and is determined up to isomorphism by the requirement that an $\scO$-structure on an object $X$ extend uniquely to a product preserving functor $\widetilde{X}:\Theta_\scO\to\scS$.  The category
$\Theta_\scO$ contains $\scO$ and $\Pi$, the category of projections, as subcategories, and $\scO\cap\Pi=\Sigma_*$, the subcategory of bijections.
We define the \textit{category of operators} $\hscO$ as the subcategory of $\Theta_\scO$ generated by $\scO$
and $\Pi$, and note that the symmetric monoidal structure on $\Theta_\scO$ restricts to $\hscO$.
For $X$ an $\scO$-algebra, the functor $\widetilde{X}:\Theta_\scO\to\scS$ restricts to a strict symmetric
monoidal functor $\widehat{X}:\hscO\to\scC$.

A more explicit description of $\hscO$ can be obtained as follows.  First observe that for any set $S$, a
projection $S^l\to S^k$, corresponds to an injection $k\to l$ of finite sets.  Thus the category of projections $\Pi$
can be identified with $\mbox{Inj}^{op}$, the opposite of the category of injections in $\scF$.
Then
\[\hscO(l,n)=\coprod\limits_{0\leq k\leq l} \scO(k,n)\times_{\Sigma_k}\Inj (k,l).\]
In particular $\hscO(l,0)$ consists of a single morphism, the nullary projection.
Composition of $(f,\sigma)\in \scO(k,n)\times_{\Sigma_k} \Inj(k,l)$ with 
$(g_1\oplus\cdots\oplus g_l,\tau)\in \scO(p,l)\times_{\Sigma_p} \Inj(p,q)$, where $g_i\in \scO(r_i,1)$ and $p=r_1 +\cdots + r_l$,
defined by
\[ (f,\sigma)\circ (g_1\oplus\cdots\oplus g_l,\tau) =
(f\circ (g_{\sigma(1)}\oplus\cdots\oplus g_{\sigma(k)}), \tau\circ \sigma(r_1,\ldots ,r_l))
\]
where $\sigma(r_1,\ldots ,r_l): \underline{r}=\underline{r_{\sigma(1)}}+\cdots +\underline{r_{\sigma(k)}}
\to \underline{p}$ is the following block injection: $\underline{r}$ is the ordered disjoint union
$\underline{r}=\underline{r_{\sigma(1)}}\amalg \cdots \amalg \underline{r_{\sigma(k)}}$ and 
$\underline{p}$ is the ordered disjoint union $\underline{r_1}\amalg \cdots \amalg \underline{r_l}$; the
block injection $\sigma(r_1,\ldots ,r_l)$ maps the block $\underline{r_{\sigma(i)}}$ identically onto
the corresponding block in $\underline{p}$. For a comparison of this description of $\hscO$ with that given
in May-Thomason \cite{MT}, the reader is referred to the proof of Lemma 5.7 in that paper.
\end{leer}

We will often denote the morphisms $(\id_k,\sigma)\in \hscO(l,k)$ by $\sigma^\ast$.

\begin{rema}\label{operads4a}
 \begin{enumerate}
 \item If $\scO= \scC om$, then $\hscO$ can be identified with $\scF_*$, the skeletal category of
\textit{based} finite sets, with objects $n_+=\{0,1,2,\dots,n\}$.  The inclusion $\scO\subset\hscO$ can be identified with the functor $\scF\to\scF_*$, which adjoins a disjoint basepoint $0$ to the finite set $n=\{1,2,\dots,n\}$.
The theory $\Theta_{\scC om}$
can be identified with the category whose objects are the natural numbers with morphisms $m\to n$ being
$n\times m$ matrices with entries in the natural numbers, with composition given by multiplication of matrices.
We can then identify $\hscO\cong\scF_*$ with the subcategory of $\Theta_{\scC om}$ whose morphisms are
matrices with entries in $\{0,1\}$, with at most one non-zero entry in each column.
\item The unique map of operads $\scO\to \scC om$ induces  functors $\hat{\epsilon}:\hscO\to\scF_*$ and
$\Theta_\epsilon:\Theta_\scO\to\Theta_{\scC om}$, and there is a pullback diagram of $\scS$ enriched categories
\[\xymatrix{
\quad\hscO\quad\ar@{^{(}->}[rr]\ar[d]^{\hat{\epsilon}} &&\quad\Theta_\scO\quad\ar[d]^{\Theta_\epsilon}\\
\quad\scF_*\quad\ar@{^{(}->}[rr] &&\quad\Theta_{\scC om}\quad
}\]
For any $E_\infty$ operad $\scO$, $\hat{\epsilon}$ induces an equivalence on $\scS$-enriched morphism sets
$\amalg_{m,n}\hscO(m,n)\to \amalg_{m,n}\scF_*(m,n)$.  More generally
for any morphism $\phi: m\to n$ in $\scF_*$ and any operad $\scO$, $\epsilon^{-1}(\phi)$ is isomorphic
to the product $\prod_{i=1}^n\scO(|\phi^{-1}(i)|)$  Moreover $\scF_*$ is the largest subcategory of
$\Theta_{\scC om}$ containing $\scF$ with this property.  For other morphisms in $\Theta_{\scC om}$ the inverse image under $\Theta_\epsilon$ is the quotient of such a product by a stabilizing group of permutations.  For
instance
\[\Theta_\epsilon^{-1}\left(\left(\begin{array}{cc}a &b\\c &d\end{array}\right):2\longrightarrow 2\right)\cong
\left(\scO(a+b)/\Sigma_a\times\Sigma_b\right)\times\left(\scO(c+d)/\Sigma_c\times\Sigma_d\right).\]
 \end{enumerate}
\end{rema}

\begin{defi}\label{operads5}
An $\hscO$-diagram in $\scS$ is an $\scS$-enriched functor $G: \hscO\to \scS$. Such a diagram is
called \textit{special} if the injections $\iota_k:\underline{1} \to \underline{n}$,
sending $1$ to $k$ define a homotopy equivalence $(\iota_1^\ast,\ldots,\iota_n^\ast): G(n)\to G(1)^n$ for each $n$, i.e. $G$ is a weakly symmetric monoidal functor.\\
We denote the category of $\hscO$-diagrams in $\scS$ by $\scS^{\hscO}$.
\end{defi}

As we noted above in \ref{operads4}, there is an obvious functor
$$
\widehat{(-)}: \scO\mbox{-}\scS\to \scS^{\hscO},$$
given by extending a symmetric monoidal functor $X:\scO\to\scS$ to a product preserving functor $\widetilde{X}:\Theta_\scO\to\scS$, and then restricting this extension to $\widehat{X}:\hscO\to\scS$.  Explicitly $\widehat{X}:\hscO\to\scS$ is
defined by $\widehat{X}(n)=X^n$, $\widehat{X}((f,id))=X(f)$, and $\widehat{X}(\sigma^\ast):X^l\to X^k$ being the projection
$$(x_1,\ldots ,x_l)\mapsto (x_{\sigma(1)},\ldots ,x_{\sigma(k)}).$$ 
 By construction, $\widehat{X}$ is special. 
 
 We recall that the classifying space functor $B:\Cat\to \Top$ is the composite of the nerve functor $N_\ast$ and the
topological realization
$$
B:\Cat\xrightarrow{N_\ast}\SSets\xrightarrow{|-|}\Top.
$$
The classifying space functor preserves products, which implies

\begin{lem}\label{operads6a} Let $\scO$ be an operad in $\Cat$, let $X$ be an $\scO$-algebra, and let $G:\hscO\to \Cat$ be an
 $\hscO$-diagram. Then
 \begin{enumerate}
  \item $B\scO$ is an operad in $\Top$ and $BX$ is a $B\scO$-space.
  \item $B(\hscO) \cong \widehat{B\scO}$ and $BG: \widehat{B\scO}\to \Top$ is a $\widehat{B\scO}$-diagram. If $G$ is special, so is $BG$.
  \item $\widehat{BX}\cong B\widehat{X}$.
 \end{enumerate}
\end{lem}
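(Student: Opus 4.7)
The plan is to derive all three parts from the single fact, noted just before the statement, that the classifying space functor $B\colon\Cat\to\Top$ preserves finite products (up to canonical natural isomorphism), together with the elementary observation that $B$ also preserves coproducts (since $N_\ast$ takes a disjoint union of categories to a disjoint union of simplicial sets, and geometric realization preserves coproducts).

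For part (1), I would apply $B$ to each piece of the operad data for $\scO$. The $\Sigma_k$-action $\scO(k)\times\Sigma_k\to\scO(k)$, the unit $\ast\to\scO(1)$, and the composition
\[
\scO(k)\times\scO(j_1)\times\cdots\times\scO(j_k)\to\scO(j_1+\cdots+j_k)
\]
become, after inserting the canonical iso $B(\scC\times\scD)\cong B\scC\times B\scD$ and using $B(\ast)=\ast$, maps of the same shape in $\Top$. Since the operad axioms are equalities of morphisms in $\Cat$, applying the functor $B$ preserves them, so $B\scO$ is an operad in $\Top$. The same reasoning applied to the structure map $\scO(k)\times X^k\to X$ gives $B\scO(k)\times(BX)^k\to BX$ and shows that $BX$ is a $B\scO$-space.

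For part (2), I would use the explicit formula
\[
\hscO(l,n)=\coprod_{0\le k\le l}\scO(k,n)\times_{\Sigma_k}\Inj(k,l)
\]
from \ref{operads4}. Since $\Inj(k,l)$ is a discrete set on which $\Sigma_k$ acts freely, the object $\scO(k,n)\times_{\Sigma_k}\Inj(k,l)$ is canonically the coproduct of $|\Inj(k,l)/\Sigma_k|$ copies of $\scO(k,n)$. Applying $B$ and using coproduct preservation gives the same formula with $B\scO$ in place of $\scO$, which is exactly $\widehat{B\scO}(l,n)$. Naturality of the product-preserving isomorphism then makes these bijections compatible with composition and identities, yielding the isomorphism of $\Top$-enriched categories $B(\hscO)\cong\widehat{B\scO}$. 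Under this identification, $BG=B\circ G$ is a $\Top$-enriched functor $\widehat{B\scO}\to\Top$, i.e. a $\widehat{B\scO}$-diagram. If $G$ is special, then by definition of equivalence in $\Cat$ the map $(\iota_1^\ast,\ldots,\iota_n^\ast)\colon G(n)\to G(1)^n$ becomes a homotopy equivalence after applying $B$, and composing with the canonical homeomorphism $B(G(1)^n)\cong (BG(1))^n$ gives the speciality of $BG$.

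For part (3), by construction $\widehat{X}(n)=X^n$ in $\Cat$, so $B\widehat{X}(n)=B(X^n)\cong(BX)^n=\widehat{BX}(n)$, again via the canonical product-preserving isomorphism. The functor $\widehat{X}$ sends $(f,\id)$ to $X(f)$ and $\sigma^\ast$ to the coordinate projection; applying $B$ and chasing the natural iso shows that $B\widehat{X}$ has exactly the same effect on morphisms as $\widehat{BX}$, so the isomorphism is one of $\widehat{B\scO}$-diagrams.

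The proof is essentially bookkeeping: there is no conceptual obstacle, only the mild nuisance of verifying that the canonical product-preservation iso for $B$ is natural enough to commute with the operadic composition, symmetric group actions, and the coproduct decomposition of $\hscO$. I would handle this once by invoking the monoidal naturality of $B$ and then let it propagate through all three parts.
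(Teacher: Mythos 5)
Your proof is correct and is exactly the argument the paper intends: the lemma is stated there without proof as an immediate consequence of the sentence ``The classifying space functor preserves products,'' and your write-up simply fills in that bookkeeping (using the freeness of the $\Sigma_k$-action on the discrete set $\Inj(k,l)$ to split $\scO(k,n)\times_{\Sigma_k}\Inj(k,l)$ into a coproduct is a clean way to handle part (2) without even invoking Lemma \ref{operads6}). No gaps.
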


If we want to determine the homotopy types of our categorical constructions,
 we usually have to assume that all operads we consider are $\Sigma$-free.  The
reason we need to make this assumption is that our constructions will require us to take
quotients, by permutation groups, of categories which are products of various
$\scO(k)$ categories together with other categories.  Under this hypotheses the
classifying spaces of the resulting quotient categories will be homeomorphic to the
quotients of the classifying spaces of the product categories, due to the fact that the classifying
space functor preserves finite products and the following elementary
result.

\begin{lem}\label{operads6} Let a discrete group $\Gamma$ act freely on a small category $\scC$. Then
$B\scC$ is a free $\Gamma$-space and
$$B(\scC/\Gamma)\cong (B\scC)/\Gamma.$$
\end{lem}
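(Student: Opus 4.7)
The plan is to factor the classifying space functor as $B = |-| \circ N_\ast$ and verify both claims for each factor separately. First I would unpack what a free $\Gamma$-action on $\scC$ means: $\Gamma$ acts freely on both the set of objects and the set of morphisms of $\scC$, and the structure maps (source, target, identity, composition) are $\Gamma$-equivariant.

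Next I would examine the induced action on the nerve. The $n$-simplices $N_n\scC$ consist of strings of $n$ composable morphisms (or objects when $n=0$), and $\Gamma$ acts diagonally. If $\gamma$ fixes such a string then it fixes each constituent morphism (or object), hence $\gamma = e$ by Step 1. Therefore $\Gamma$ acts freely on $N_n\scC$ for every $n$, so $N_\ast\scC$ is a free simplicial $\Gamma$-set. The second key point is that the nerve commutes with the quotient, i.e.\ $N_\ast(\scC/\Gamma)\cong (N_\ast\scC)/\Gamma$. This is because the quotient category $\scC/\Gamma$ has as objects and morphisms the $\Gamma$-orbits on $\ob\scC$ and $\mor\scC$, and freeness of the action guarantees that composition on orbits is unambiguously defined; the compatibility of nerves with this quotient is then immediate from the diagonal description of simplices.

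Finally I would invoke the standard fact that geometric realization, being a left adjoint, commutes with colimits, and in particular with quotients by discrete group actions: $|X/\Gamma|\cong |X|/\Gamma$ for any simplicial set $X$ with a $\Gamma$-action. Combining these gives
\[
B(\scC/\Gamma) \;=\; |N_\ast(\scC/\Gamma)|\;\cong\; |(N_\ast\scC)/\Gamma|\;\cong\; |N_\ast\scC|/\Gamma \;=\; (B\scC)/\Gamma.
\]
For the freeness assertion, since $\Gamma$ acts freely on each set of non-degenerate simplices of $N_\ast\scC$, the standard CW-description of geometric realization (where each point has a unique representative coming from a non-degenerate simplex and an interior point of the standard simplex) shows that the induced action on $B\scC$ is free.

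The proof is essentially bookkeeping; the only substantive point to pin down is the claim that the nerve commutes with the quotient, which in turn rests on the free action ensuring the quotient category is well-defined and that no degeneracy or face map introduces new identifications beyond the $\Gamma$-orbit relation. I do not expect any serious obstacle.
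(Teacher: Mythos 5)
Your proposal is correct and follows essentially the same route as the paper: both reduce the statement to the identification $N_\ast(\scC/\Gamma)\cong (N_\ast\scC)/\Gamma$ and then let geometric realization commute with the quotient. The one point you flag as needing care is exactly what the paper spells out — every composable string in $\scC/\Gamma$ lifts uniquely to $\scC$ once a lift of the initial vertex is chosen, which is what freeness on objects buys you.
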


\begin{proof} Since $B\scC$ is a $\Gamma$-CW complex with a free $\Gamma$ action, $B\scC\to (B\scC)/\Gamma$ is a numerable
principal bundle. We have $ob(\scC/\Gamma)=ob(\scC)/\Gamma$ and $mor(\scC/\Gamma)=mor(\scC)/\Gamma$.  Composition in
$\scC/\Gamma$ is defined by lifting to $\scC$: given composable morphisms $[f]:[A]\to[B]$ and $[g]:[B]\to[C]$
in $\scC/\Gamma$, choose a representative object $A$ in $\scC$.  Then there are unique morphisms $f:A\to B$
and $g:B\to C$ in $\scC$ representing $[f]$ and $[g]$ and $[g][f]=[gf]$.  Hence any simplex in the nerve
of $\scC/\Gamma$ has a unique lift to the nerve of $\scC$ once we choose a lift of the initial vertex. It follows
that the nerve of $\scC/\Gamma$ is the quotient of the nerve of $\scC$ by the action of $\Gamma$, which implies
the result.
\end{proof}

This result fails to hold if the action of $\Gamma$ on $\scC$ is not free. For instance if
$H$ is a group regarded as a category with one object, $\scC=H\times H$ and $\Gamma=\mathbf{Z}/2$
acts on $\scC$ by permuting the factors, then $B(\scC/\Gamma)\cong BA$, where $A$ is the abelianization of $H$.
 This is clearly different from 
$B(H\times H)/\Gamma=(BH\times BH)/\left(\mathbf{Z}/2\right)$, particularly if $H$ is perfect.

\section{Free operads}\label{free}
Since our constructions start with free operads we recall their construction for the convenience of the reader and to fix notations. 
We follow the expositions \cite[Section 5.8]{Berger} and \cite[Section 3]{Berger2}, because they are the most convenient one for our purposes.
We recommend \cite[Part I, \S2]{MT} for background on the language of trees.

Recall that a \textit{collection} $\mathcal{K}$ in one of our categories $\mathcal{S}$ is an $\mathbb{N}$-indexed family of objects 
$\mathcal{K}(n)$ with a right $\Sigma_n$-action. Let $\Opr(\mathcal{S})$ and $\Coll(\mathcal{S})$ denote the categories of operads 
and collections in $\mathcal{S}$. Then there is the obvious forgetful functor
$$
R:\Opr(\mathcal{S})\to\Coll(\mathcal{S})
$$
and we are interested in its left adjoint
$$
L:\Coll(\mathcal{S})\to\Opr(\mathcal{S})
$$
the \textit{free operad functor}.

Let $\mathbb{T}$ denote the groupoid of planar trees and non-planar isomorphisms.
 Its objects are finite directed rooted planar trees [cf. \cite[pp. 85-87]{L} for a 
formal definition].  A tree can have three types of edges: \textit{internal edges} with a node on each end, \textit{input edges} with a node 
only at the end, and one outgoing edge, called the \textit{root}, with a node only at its beginning. Each node $\nu$ has a 
finite totally ordered poset $\underline{\In(\nu)}$ of incoming edges, also called \textit{inputs} of $\nu$, 
and exactly one outgoing edge, called its \textit{output}. 
The cardinality $\In(\nu)$ of $\underline{\In(\nu)}$ is called the \textit{valence} of $\nu$. We allow 
\textit{stumps}, i.e. nodes of valence 0, 
and the \textit{trivial tree} consisting of a single edge. The poset of input edges of a tree $T$ is 
denoted by $\underline{\In(T)}$, and its cardinality by $\In(T)$. We say that a subtree $T'$ of a tree $T$ is a \textit{subtree above a node}
$\nu$ of $T$ if $T'$ consists of an incoming edge of $\nu$ and all nodes and edges of $T$ lying above that edge. Note that if $T'$ is
such a subtree, then $\underline{\In(T')}$ forms a (possibly empty) subinterval of $\underline{\In(T)}$.

\begin{defi}\label{ind_perm}
A morphism $\phi:T\to T'$ in $\mathbb{T}$ is an isomorphism of trees after forgetting their planar structures. 
So $\phi$ preserves inputs and hence induces
a bijection $\In(\phi): \underline{\In(T)}\to \underline{\In(T')}$.
If $in_1,\ldots,in_n$ are the inputs of $T$ and
$in'_1,\ldots,in'_n$ are the inputs of $T'$ counted from left to right, then $\phi$ has an associated 
permutation $\phi^\Sigma\in \Sigma_n$ defined by $\phi^\Sigma(k)=l$ if
$\phi(in_l)=in'_k$.  Note that $\phi\mapsto\phi^\Sigma$ is covariant: $\left(\psi\phi\right)^\Sigma=\psi^\Sigma\phi^\Sigma$.
\end{defi}

Let $\Theta_n$ denote the tree with exactly one node and $n$ inputs. Any tree $T$ with a root node of valence $n$ decomposes 
uniquely into $n$ trees $T_1,\ldots,T_n$ whose outputs are grafted onto the inputs of $\Theta_n$ (see picture below).
	$$
  \parbox{1cm}{$T=$}
  \parbox{4cm}{
  \begin{tikzpicture}[
  every node/.style={fill=white,circle,inner sep=1pt},
  level distance=10mm,sibling distance=10mm]
  \node[white] {}[grow=north]
  child { node [draw,fill=black]  {}
  child { node {$T_n$} }
  child[missing] 
  child { node {$T_2$} }
  child { node {$T_1$} }
  };
  \draw(0.3,0.9)  node[right] {$\Theta_n $};
  \draw(0.1,2)  node[right] {$\ldots$};
  \end{tikzpicture}
  }
  $$
  We denote this grafting operation by
$$
T=\Theta_n\circ(T_1\oplus\ldots\oplus T_n).
$$

Any isomorphism $\phi:T\to T'$ has a similar decomposition
$$
\phi=\sigma\circ(\phi_1\oplus\ldots\oplus\phi_n)
$$
into isomorphisms $\sigma:\Theta_n\to\Theta_n$ and $\phi_i:T_\sigma(i)\to T'_{i}$. Since $\sigma$ only
permutes the inputs of $\Theta_n$ we usually denote $\sigma^\Sigma$ simply by $\sigma$.

Since the number of nodes and edges in each $T_i$ is strictly less than 
the number of nodes and edges in $T$, this decomposition is suited for inductive procedures.

For any collection $\mathcal{K}$ we define a functor $\underline{\mathcal{K}}:\mathbb{T}^{\op}\to\mathcal{S}$ inductively by 
mapping the trivial tree to the terminal object and putting
$$
\underline{\mathcal{K}}(T)=\underline{\mathcal{K}}(\Theta_n\circ(T_1\oplus\ldots\oplus T_n))=\mathcal{K}(n)\times 
\underline{\mathcal{K}}(T_1)\times\ldots\times\underline{\mathcal{K}}(T_n)
$$
On morphisms $\phi:T\to T'$ we define $\phi^\ast:\underline{\mathcal{K}}(T')\to\underline{\mathcal{K}}(T)$ inductively by
$$
\phi^\ast=(\sigma\circ(\phi_1\oplus\ldots\oplus\phi_n))^\ast=\sigma^\ast\times \phi^\ast_{\sigma(1)}\times\ldots\times\phi^\ast_{\sigma(n)}.
$$
which is determined by setting
$$
\sigma^\ast:\mathcal{K}(n)\to\mathcal{K}(n), \; a\mapsto a\cdot\sigma
$$
There is also a functor $\lambdaov:\mathbb{T}\to\mathcal{S}ets$ associating with each tree $T$ the set $\lambdaov(T)$ of 
bijections $\tau:\{1,2,\ldots,\In(T)\}\to \underline{\In(T)}$. On morphisms $\phi:T\to T'$ we define 
$$
\lambdaov(\phi):\lambdaov(T)\to\lambdaov(T), \; \tau\mapsto \In(\phi)\circ \tau.
$$
Since $\Sets$ is canonically included in $\Cat$, $\SSets$, and $\Top$ as the full 
subcategory of discrete objects, we can consider $\lambdaov$ as a functor $\lambdaov:\mathbb{T}\to\mathcal{S}$. 
The groupoid $\mathbb{T}$ is the disjoint sum of the groupoids $\mathbb{T}(n)=\{T\in\mathbb{T};\In(T)=n\}$ and
the free operad functor
$$
L:\Coll(\mathcal{S})\to\Opr(\mathcal{S})
$$
sends the collection $\scK$ to the operad whose underlying collection is the family of coends
$$
L\mathcal{K}(n)=\underline{\mathcal{K}}\otimes_{\mathbb{T}(n)}\lambdaov,\ \ n\in \mN.$$
Before we define the operad structure let us give an explicit description of $L\mathcal{K}(n)$.
An element of $L\mathcal{K}(n)$ is represented by a triple $(T,f,\tau)$ consisting of a tree $T$ with $n$ inputs, a 
function $f$ assigning to each node $\nu$ of $T$ an element $a\in\mathcal{K}(\In(\nu))$, and a bijection
$\tau:\underline{n}=\{1,2,\ldots,n\}\to\underline{\In(T)}$. We call $a$ the \textit{decoration} of $\nu$ 
and $i$ the \textit{label} of the input $\tau(i)$. We usually suppress $f$ and $\tau$ and speak of a decorated tree $T$ with input labels. 

\begin{leer}\label{free_1}\textit{Equivariance relation;}
We impose the following relation on the set of decorated trees $T$ with input labels. Let
	$$
  \parbox{1cm}{$T' = $}
  \parbox{4cm}{
  \begin{tikzpicture}[
  every node/.style={fill=white,circle,inner sep=1pt},
  level distance=10mm,sibling distance=10mm]
  \node[white] {}[grow=north]
  child { node [draw,fill=black]  {}
  child { node {$T'_l$} }
  child[missing] 
  child { node {$T'_2$} }
  child { node {$T'_1$} }
  };
  \draw(0.3,0.9)  node[right] {$a$};
  \draw(0.1,2)  node[right] {$\ldots$};
  \end{tikzpicture}
  }
  $$
  be a subtree of $T$ above a node $\nu$ with decoration $a\in\mathcal{K}(l)$ and let $\sigma\in\Sigma_l$. 
Then $T$ is equivalent to the decorated tree ${}^\sigma T$ obtained from $T$ by replacing $T'$ by
	$$
  \parbox{1cm}{$T'' =$}
  \parbox{5cm}{
  \begin{tikzpicture}[
  every node/.style={fill=white,circle,inner sep=1pt},
  level distance=10mm,sibling distance=10mm]
  \node[white] {}[grow=north]
  child { node [draw,fill=black]  {}
  child { node {$T'_{\sigma(l)}$} }
  child[missing] 
  child[missing] 
  child { node {$T'_{\sigma(1)}$} }
  };
  \draw(0.3,0.75)  node[right] {$a\cdot\sigma$};
  \draw(-0.7,2)  node[right] {$\ldots\ldots$};
  \end{tikzpicture}
  }
  $$
\end{leer}

The elements of $L\mathcal{K}(n)$ are the equivalence classes of decorated trees with input labels with respect to this relation.

If $(T,f,\tau)$ represents an element $x$ in $L\mathcal{K}(n)$ and if $\sigma\in\Sigma_n$, we define $x\cdot \sigma$ to be represented
by $(T,f,\tau\circ\sigma)$. This defines the right action of $\Sigma_n$ on $L\mathcal{K}(n)$. Operad composition is defined 
by grafting decorated trees with input labels according to the 
labels: $T\circ(T_1\oplus\ldots\oplus T_n)$ is obtained by grafting $T_i$ on the input of $T$ labeled by $i$.

Let $\tau:\underline{n}\to\underline{\In(T)}$ be an input labeling of $T$, and suppose $\tau(i)$ is the $k$-th 
input of $T$ counted from left to right. Then we identify $\tau$ with the permutation $\tau\in\Sigma_n$ sending $i$ to $k$. Using this identification we obtain

\begin{prop}\label{free_2}
	 $L\mathcal{K}(n)=\underline{\mathcal{K}}\otimes_{\mathbb{T}(n)}\lambdaov=\coprod\limits_{[T]}
	\underline{\mathcal{K}}(T)\otimes_{\Aut (T)}\Sigma_n,\ \ n\ge 0$,

where the sum is indexed by isomorphism classes of trees in $\mathbb{T}(n)$.
\end{prop}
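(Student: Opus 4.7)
The plan is to observe that the claimed formula is the standard evaluation of a coend indexed over a groupoid decomposed into its orbits. First I would unpack the coend in question: since $\underline{\mathcal{K}}$ is contravariant and $\lambdaov$ is covariant on $\mathbb{T}(n)$, the object $\underline{\mathcal{K}}\otimes_{\mathbb{T}(n)}\lambdaov$ is the coequalizer
\[
\coprod_{\phi:T\to T'}\underline{\mathcal{K}}(T')\times\lambdaov(T)\ \rightrightarrows\ \coprod_{T\in \mathbb{T}(n)}\underline{\mathcal{K}}(T)\times\lambdaov(T),
\]
where one map uses $\phi^\ast$ and the other uses $\lambdaov(\phi)$. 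The groupoid $\mathbb{T}(n)$ is small (really an essentially small groupoid), so it decomposes as a disjoint union of its connected components, indexed by isomorphism classes $[T]$ of trees with $n$ inputs. Coends distribute over disjoint unions of indexing categories, giving
\[
\underline{\mathcal{K}}\otimes_{\mathbb{T}(n)}\lambdaov \;\cong\; \coprod_{[T]}\,\underline{\mathcal{K}}\otimes_{\mathbb{T}(n)_{[T]}}\lambdaov,
\]
so it suffices to evaluate each summand separately.

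Next, inside a single connected component $\mathbb{T}(n)_{[T]}$, every object is isomorphic to the representative $T$, and the inclusion of the full subgroupoid on $\{T\}$ (that is, of the one-object groupoid $\Aut(T)$) is an equivalence of categories. Since coends are invariant under equivalences of the indexing category, the corresponding summand reduces to the standard coinvariants
\[
\underline{\mathcal{K}}\otimes_{\mathbb{T}(n)_{[T]}}\lambdaov \;\cong\; \underline{\mathcal{K}}(T)\otimes_{\Aut(T)}\lambdaov(T),
\]
where $\Aut(T)$ acts on $\underline{\mathcal{K}}(T)$ on the right via $\phi\mapsto \phi^\ast$ and on $\lambdaov(T)$ on the left via $\tau\mapsto \In(\phi)\circ\tau$.

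Finally I would identify $\lambdaov(T)$ with $\Sigma_n$ carrying its canonical right $\Sigma_n$-action. Fixing the left-to-right bijection $\tau_0\colon \underline{n}\to \underline{\In(T)}$, every $\tau\in\lambdaov(T)$ has the unique form $\tau=\tau_0\circ\sigma$ with $\sigma\in\Sigma_n$, which is exactly the identification singled out in the paragraph preceding the proposition. The free right $\Sigma_n$-action on $\lambdaov(T)$ by precomposition becomes right multiplication on $\Sigma_n$, and the $\Aut(T)$-action on the left identifies, via $\phi\mapsto\phi^\Sigma$, with left multiplication by a permutation. Substituting $\lambdaov(T)\cong\Sigma_n$ into the previous display and assembling over $[T]$ yields the asserted formula.

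The main obstacle is essentially bookkeeping: one has to keep straight the variance of the two functors, check that the identification $\lambdaov(T)\cong\Sigma_n$ really is $(\Aut(T),\Sigma_n)$-biequivariant with $\Aut(T)$ acting through $\phi\mapsto\phi^\Sigma$, and verify that the covariance relation $(\psi\phi)^\Sigma=\psi^\Sigma\phi^\Sigma$ noted in Definition \ref{ind_perm} gives the correct left action so that $\otimes_{\Aut(T)}$ on the right-hand side makes sense as a coend of $\scS$-valued functors. Once the variance is handled, the decomposition into orbits and the reduction to $\Aut(T)$-coinvariants is routine.
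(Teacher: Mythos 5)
Your argument is correct and is essentially the one the paper intends: the paper states no separate proof of Proposition \ref{free_2} but immediately records it as a special case of the general observation \ref{free_3} (a coend over a groupoid decomposes over isomorphism classes and each summand reduces to $\underline{F}(G)\otimes_{\Aut(G)}\lambdaov(G)$), which is exactly the decomposition you carry out, followed by the same identification $\lambdaov(T)\cong\Sigma_n$ via the left-to-right labeling fixed in the paragraph preceding the proposition.
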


For later use we observe that Proposition \ref{free_2} is a special case of a more general result.

\begin{leer}\label{free_3}
 Let $\mathbb{G}$ be a groupoid and let $\underline{F}: \mathbb{G}^{\op}\to \scS$ and $\lambdaov:\mathbb{G}\to \scS$ be functors. Then
 $$\underline{F}\otimes_{\mathbb{G}} \lambdaov=\coprod\limits_{[G]}\underline{F}\otimes_{[G]} \lambdaov\cong \coprod\limits_{[G]}\underline{F}(G)\otimes_{\Aut(G)} \lambdaov(G)$$
 where the sum is indexed by isomorphism classes in $\mathbb{G}$. The coend $\underline{F}\otimes_{[G]} \lambdaov$ is taken over the elements in
 the class $[G]$. The isomorphism depends on the choice of representatives $G$ in the class $[G]$.
\end{leer}

\section{Rectifying $\hscO$-spaces}\label{simpleversion}

We start with our rectification construction for $\hscO$-spaces, which is easier
 to describe than the version we use for the $\Cat$ case.
Although this space version is simpler, it uses some of the same ingredients as our
subsequent rectification construction for $\hscO$-categories and will help to
motivate that construction.  In the process we give a simple variant of a rectification result of May 
and Thomason \cite[Theorem. 4.5]{May3}. We should also note that the construction we
define here is a variant of the $M$-construction of Boardman-Vogt
\cite[p. 134ff]{BV}.
\begin{leer}\label{rect1}
Let $\scO$ be an arbitrary operad in $\Top$. We are going to define a rectification functor
$$
M: \Top^{\hscO}\to \scO\mbox{-}\Top.
$$
Our construction starts with
a modification of the free operad construction. We inductively define a functor $\mathcal{L}\scO:\mT^{\op}
\to \Top$ by mapping the trivial tree to a point and putting
$$
\mathcal{L}\scO(T)=\mathcal{L}\scO(\Theta_n\circ (T_1\oplus\ldots\oplus T_n))=\scO(n)\times I^n\times 
\mathcal{L}\scO(T_1)\times\ldots\times \mathcal{L}\scO(T_n)
$$
where $I$ is the unit interval. On morphisms $\phi: T\to T'$ the functor is given by
$$
\phi^\ast=(\sigma\circ (\phi_1\oplus\ldots\oplus \phi_n))^\ast =
\sigma^\ast\times \phi_{\sigma(1)}^\ast\times\ldots\times \phi_{\sigma(n)}^\ast
$$
with
$$
\sigma^\ast:\scO(n)\times I^n\to \scO(n)\times I^n,\quad (a;t_1,\ldots,t_n)\mapsto 
(a\cdot \sigma;t_{\sigma(1)},\ldots,t_{\sigma(n)}).
$$
For $G: \hscO \to \Top$ there is a functor $\lambda = \lambda_G: \mT\to \Top$, sending the trivial tree to 
$G(\scO(1))$ and 
$\Theta_n\circ(T_1\oplus\ldots\oplus T_n)$ to $G(\In (T_1))\times\ldots \times G(\In (T_n))$. In particular,
$\lambda(\Theta_n)=G(\scO(1))^n$. On
morphisms $\sigma:\Theta_n\to \Theta_n$ it is defined by
$$
\lambda(\sigma):G(1)^n\to G(1)^n,\quad (g_1,\ldots,g_n)\mapsto (g_{\sigma^{-1}(1)},\ldots, g_{\sigma^{-1}(n)})
$$
and for $\phi=\sigma\circ (\phi_1\oplus\ldots\oplus \phi_n):T\to T'$ by 
$$
\begin{array}{rcl}
 \lambda(\phi):G(\In (T_1))\times\ldots \times G(\In (T_n))& \rightarrow &G(\In (T'_1))\times\ldots \times G(\In (T'_n))\\
 (g_i)_{i=1}^{n} & \mapsto & (G(\phi_{\sigma^{-1}(i)}^\Sigma)(g_{\sigma^{-1}(i)}))_{i=1}^{n}.
\end{array}
$$
Here recall that $\phi_{\sigma^{-1}(i)}: T_{\sigma^{-1}(i)}\to T"_i$ is in $\mT$ and $\phi_{\sigma^{-1}(i)}^\Sigma$ is the induced
inputs permutation (cf. Definition \ref{ind_perm}).
A natural transformation $G\to G'$ induces a natural transformation $\lambda_G\to \lambda_{G'}$.

Let $\widetilde{\mT}\subset \mT$ be the full subgroupoid of non-trivial trees. Restricting our functors to $\tmT$ the
 coend construction defines a functor 
$$
\scL\scO\otimes_{\tmT}\lambda_{(-)}:\Top^{\hscO}\to\Top,\quad G\mapsto \scL\scO\otimes_{\tmT}\lambda_G.
$$
The functor $M: \Top^{\hscO}\to \scO\mbox{-}\Top$ will be a quotient of this functor.
\end{leer}

\begin{leer}\label{rect2}
We find it helpful to view an element of $\scL\scO(T)$ as a triple  $(T,f,h)$ consisting of a 
tree $(T,f)$ with vertex decorations like in Section 3, and a length function $h$ assigning to each internal edge of $T$ a length 
in $I$.
We usually suppress $f$ and $h$ and speak of a decorated tree $T$ with lengths whose nodes are decorated by elements in $\scO$
and whose internal edges have a length label.
It will be clear from the context whether $T$ denotes a decorated tree with lengths or just a tree.
 Let $T$ have the form
\begin{leer}\label{rect2a}
$$
\parbox{1cm}{T=}
\parbox{4cm}{
\begin{tikzpicture}[
every node/.style={fill=white,circle,inner sep=1pt},
level distance=10mm,sibling distance=10mm]
\node[white] {}[grow=north]
child { node [draw,fill=black]  {}
child { node {$T_n$} }
child[missing] 
child { node {$T_2$} }
child { node {$T_1$} }
};
\draw(0.3,0.9)  node[right] {root};
\draw(0.1,2)  node[right] {$\ldots$};
\end{tikzpicture}
}
$$
Here $T_i$ is allowed to be the trivial tree.
\end{leer}
We define
$$
\begin{array}{rcl}
V(G,T) &=& \mathcal{L}\scO(T)\times G(\In(T))\\
U(G,T) &=& \mathcal{L}\scO(T)\times G(\In(T_1))\times\ldots\times G(\In(T_n))
\end{array}
$$
$$\scL\scO\otimes_{\tmT}\lambda_G =\left(\coprod_T U(G,T)\right)/\sim$$
\end{leer}
where the unions is taken over all trees in $\tmT$ and the relations are as follows:
\begin{leer}\label{rect3} \textit{Equivariance relations:} 
It is helpful to consider $U(G,T)$ as
$$
U(G,T)=\scO(n)\times I^n\times V(G,T_1)\times\ldots\times V(G,T_n)
$$
where $(t_1,\ldots,t_n)\in I^n$ are the lengths of the incoming edges of the root from left to right and $\scO(n)$ is the space of root decorations.
\begin{enumerate}
	\item \textit{Root equivariance:} Let $\sigma\in\Sigma_n$, then
	\begin{multline*}
	(a;t_1,\ldots,t_n;(T_1,g_1),\ldots,(T_n,g_n))\\
	\qquad \sim (a\cdot\sigma;t_{\sigma(1)},\ldots,t_{\sigma(n)};
	(T_{\sigma(1)},g_{\sigma(1)}),\ldots,(T_{\sigma(n)},g_{\sigma(n)}))
	\end{multline*}
	\item $T_i$\textit{-equivariance:} $T_i$-equivariance is a relation on the factor $V(G,T_i)$. We use the notation
	of \ref{free_1} with the difference that the internal edges of our trees have a length label. As in \ref{free_1} let $T'$ be the
	subtree above a node $v$ of valence $l$ of $T_i$ decorated by $a$. Let
   $\sigma\in\Sigma_l$ and let ${}^\sigma T_i$ be obtained from $T_i$ as in \ref{free_1}. Then $\sigma$ determines an isomorphism
   $\phi: T_i\to {}{}^\sigma T_i$ of underlying trees in $\tmT$, and $T_i$-equivariance is the relation
$$
   (T_i;g_i)\sim({}^\sigma T_i;G(\phi^\Sigma)(g_i)).
   $$
 \end{enumerate}  
 \end{leer}  
\begin{defi}\label{rect4} The functor
$$M: \Top^{\hscO}\to \scO\mbox{-}\Top$$
is obtained from the functor $\scL\scO\otimes_{\tmT}\lambda_{(-)}$ by imposing the 
 following relations. Let $T$ be a decorated tree with lengths of the form \ref{rect2a}.
   
\begin{enumerate}
\item \textit{Shrinking an internal edge:} An internal edge $e$ of length 0 may be shrunk
\begin{center}
\begin{tikzpicture}[>=triangle 60]
\draw[fill](0,0) circle(1pt) node[right=2mm] {node $v$ with decoration $a$};
\draw[fill](0,2) circle(1pt) node[right=2mm] {node $w$ with decoration $b$};
\draw[] (0,0) -- (0,2);
\draw[->] (0,2) -- (0,1);
\draw(0,1.1)  node[left] {$e\ \ =\ \ $} node[right=2mm] {edge e with length $0$};
\draw[] (0,0) -- (0,2);
;
\end{tikzpicture}
\end{center}
Let $T'$ be obtained from $T$ by shrinking $e$. If $e$ is the $i$-th input of $v$ counted from left to right the new node in $T'$ 
is decorated by $a\circ(\id_{i-1}\oplus b\oplus\id_{\In(v)-i})$.
\begin{enumerate}
	\item $v$ \textit{is not the root:} Then
	$$
	(T;g_1,\ldots,g_n)\sim(T';g_1,\ldots,g_n), \qquad g_i\in G(\In(T_i))
	$$
	\item $v$ \textit{is the root:} Then $T_i$ has the form
	\begin{center}
  \begin{tikzpicture}[
  every node/.style={fill=white,circle,inner sep=1pt},
  level distance=10mm,sibling distance=10mm]
  \node[white] {}[grow=north]
  child { node [draw,fill=black]  {}
  child { node {$T_{i_r}$} }
  child[missing] 
  child[missing] 
  child { node {$T_{i_1}$} }
  };
  \draw(0.3,0.9)  node[right] {$w$};
  \draw(-0.7,2)  node[right] {$\ldots\ldots$};
  \end{tikzpicture}
  \end{center}
  and $e$ is the outgoing edge of $w$.  If $w$ is not a stump,
  shrinking $e$ makes the incoming edges of $w$ into incoming edges of the root of $T'$.
  Let $\tau_j:\underline{\In(T_{i_j})}\subset\underline{\In(T_i)}$ be the inclusion, then there is a map
  $$
  \tau^\ast:G(\In(T_i))\to\prod^r_{j=1}G(\In(T_{i_j}))
  $$
  whose $j$-the component is $G(\tau^\ast_j)$. We have the relation
  $$
  (T;g_1,\ldots, g_n)\sim (T';g_1,\ldots,g_{i-1}, \tau^\ast(g_i),g_{i+1},\ldots,g_n)
  $$
  If $w$ is a stump, $In(T_i)=\emptyset$ and we impose the relation
  $$(T;g_1,\ldots, g_n)\sim (T';g_1,\ldots, g_n).$$
\end{enumerate}
\item \textit{Chopping an internal edge:} An internal edge $e$ of length 1 may be chopped off.  
Let $e$ be as above, but of length $1$. Let $T''$ be the subtree of $T$ with root $w$.
 Then $T''$ is a subtree of some $T_i$. Let $T'$ be obtained from $T$ by deleting the subtree $T''$.
 Composing all node decorations of $T''$ using the operad composition gives us an element 
$c\in\scO(\In(T''))$. We label the inputs of $T_i$ from left to right by $1$ to $\In(T_i)$. 
Then the inputs of $T''$ form a subinterval $s+1,s+2,\ldots, s+t$ with $t=\In(T'')$. Define
$$
\widehat{c}=\id_s\oplus c\oplus\id_{\In(T_i)-s-t}\in\scO\subset \hscO.
$$
We have the relation
$$
(T;g_1,\ldots,g_k)\sim (T';g_1,\ldots,g_{i-1}, G(\widehat{c})(g_i), g_{i+1},\ldots,g_k)
$$
In particular, if $w$ is a stump then $c=b\in \scO(0)$, $\In(T'')=\emptyset$ so
that $t=0$, and $\In(T')=\In(T)+1$. 
\end{enumerate}
\end{defi}

\begin{prop}\label{rect5} $M(G)$ has an $\scO$-algebra structure, and we obtain a functor
$$M: \Top^{\hscO}\to \scO\mbox{-}\Top.$$
\end{prop}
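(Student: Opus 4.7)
The plan is to define the $\scO$-action on $M(G)$ via a flattening construction on representatives and then verify the algebra axioms using operad associativity. Given $a\in\scO(k)$ and representatives $(T_i;g_1^{(i)},\dots,g_{n_i}^{(i)})\in U(G,T_i)$ of $x_i\in M(G)$, where $T_i=\Theta_{n_i}\circ(T_{i,1}\oplus\cdots\oplus T_{i,n_i})$ has root decoration $a_i$ and root-edge lengths $(t_j^{(i)})$, I set
\[
\mu_k(a;x_1,\dots,x_k)=[(T;g)],
\]
where $T=\Theta_N\circ(T_{1,1}\oplus\cdots\oplus T_{k,n_k})$ with $N=\sum_i n_i$, root decoration $a\circ(a_1\oplus\cdots\oplus a_k)$, root-edge lengths concatenated in block order, and data tuple $g=(g_j^{(i)})_{i,j}$. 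Heuristically, this is the equivalence class of the 2-level tree obtained by grafting each $T_i$ onto a new root decorated by $a$ with length-$0$ new edges, then flattened via shrinking relation (1)(b).

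Next I would verify well-definedness by checking that each of the four defining relations, applied to a single input $x_i$, lifts to an equivalent relation on the output $T$. The key observation is that in $T$ the former root of $T_i$ has been absorbed into the composite root decoration, while the other features of $T_i$ persist as substructures of $T$. Specifically: a root-equivariance permutation of $T_i$'s branches lifts, via the operad equivariance axiom for $\circ$, to a block-restricted root equivariance on $T$; any $T_i$-equivariance or internal (non-root) shrink or chop strictly inside $T_i$ is directly inherited as the same relation inside the corresponding branch of $T$; a root-shrink case (1)(b) in $T_i$ applied at the $j_0$-th incoming edge of $T_i$'s root becomes a root-shrink in $T$ applied at the $i_0$-th incoming edge of $T$'s root (with $i_0=\sum_{i'<i}n_{i'}+j_0$), yielding the same composite root decoration by operad associativity and the identical $\tau^*$ data transformation; and similarly the root sub-case of chop (2) applied at $T_i$ becomes the root sub-case of chop applied at $T$, with the identical $G(\widehat{c})$ data transformation.

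For the $\scO$-algebra axioms, unitality $\mu_1(\id;x)=x$ is immediate since $\id\circ a_1=a_1$ leaves the representative unchanged; associativity reduces to the operad identity
\[
b\circ\bigl((a_1\circ s_1)\oplus\cdots\oplus(a_l\circ s_l)\bigr)=\bigl(b\circ(a_1\oplus\cdots\oplus a_l)\bigr)\circ(s_1\oplus\cdots\oplus s_l),
\]
so that iterated application of $\mu$ agrees with a single application using the composite; and $\Sigma_k$-equivariance $\mu_k(a\cdot\tau;x_1,\dots,x_k)=\mu_k(a;x_{\tau(1)},\dots,x_{\tau(k)})$ follows from the operad equivariance axiom for $\circ$ combined with the root equivariance of $M(G)$ absorbing the resulting block permutation. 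Continuity of $\mu_k$ is clear from the formula (concatenation and operad composition are continuous), and naturality in $G$ follows because $M(\eta)$ for a natural transformation $\eta\colon G\to G'$ applies $\eta$ componentwise to the data tuples, commuting with the concatenation in $\mu_k$'s definition.

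The main obstacle is the careful index bookkeeping in the root-shrink and root-chop cases: one must track the block offsets through the composition $a\circ(a_1\oplus\cdots\oplus a_k)$ and verify that the operad associativity identity aligns the resulting root decorations on both sides of the claimed identification. Once those indices are in place, the identification of the transformed data becomes routine.
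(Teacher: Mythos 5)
Your construction of the action is exactly the paper's: graft the roots of the $T_i$ together into a single root decorated by $a\circ(a_1\oplus\cdots\oplus a_k)$, keep the branch trees and edge lengths, and concatenate the data tuples. The paper states only this formula and leaves the compatibility with the relations and the operad axioms implicit, so your additional verifications are a correct elaboration of the same argument rather than a different route.
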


\begin{proof}
Let $x_i\in M(G)$, $i=1,\ldots,n$, be represented by $(T_i; g_{i1},\ldots,
g_{ik_i})$ and let $a\in \scO(n)$. Then $a(x_1,\ldots,x_n)$ is represented by
$$
(T; g_{11},\ldots,g_{1k_1},\ldots,g_{n1},\ldots,g_{nk_n})
$$
where $T$ is obtained from $T_1,\ldots,T_n$ by grafting the roots of the $T_i$ together to a single root.
 If the root of $T_i$ is decorated by $b_i$, the new root is decorated by
$$
a\circ(b_1\oplus\ldots\oplus b_n).
$$
\end{proof}

We want to compare the $\hscO$-space $\widehat{M(G)}$ associated with the $\scO$-algebra $M(G)$ with the original $\hscO$-space $G$. 
For this purpose we define an $\hscO$-space 
$$
Q(G):\hscO\to\Top, \quad n\mapsto Q_n(G)
$$
by
$$
Q_n(G)=\left(\coprod \mathcal{L}\scO(T_1)\times\ldots\times\mathcal{L}\scO(T_n)\times G(\In(T_1)+\ldots+\In(T_n))\right)/\sim
$$
where the union is taken over all $n$-tuples $(T_1,\ldots,T_n)$ of trees in $\tmT$. The relations are
\begin{enumerate}
	\item \textit{Shrinking an internal edge:} An internal edge $e$ of length $0$ in any of the trees may be shrunk as
	explained in Definition \ref{rect4}.1a, which makes sense even if $e$ is a root edge. 
	\item \textit{Chopping an internal edge:} Any internal edge $e$ of length $1$ in any of the trees may be chopped
	as explained in Definition \ref{rect4}.2 with the difference that $\widehat{c}$ is formed using all inputs rather than only the ones
        of $T_i$.
	\item\textit{ Equivariance:} $T_i$-equivariance as explained in \ref{rect3}.2 holds for each tree $T_1,\ldots,T_n$ 
         and the relation reads
$$(T_1,\ldots,T_n;g)\sim (T_1,\ldots,{}^\sigma T_i,\ldots,T_n;G(\id\times\ldots\times \phi^\Sigma\times\ldots\times\id)(g))
$$
\end{enumerate}

This defines $Q(G)$ on objects.\\
 For $\sigma\in\Inj(k,l)$ the map $Q(G)(\sigma^\ast):Q_l(G)\to Q_k(G)$ 
is given by the projections
$$
\mathcal{L}\scO(T_1)\times\ldots\times\mathcal{L}\scO(T_l)\to\mathcal{L
}\scO(T_{\sigma(1)})\times\ldots\times\mathcal{L}\scO(T_{\sigma(k)})
$$
and the map $G(\sigma(\In(T_1),\ldots,\In(T_l))^\ast)$ defined in \ref{operads4}. If $k_1+\ldots+k_r=m$ and 
$$
\alpha=(\alpha_1,\ldots,\alpha_r)\in\scO(k_1,1)\times\ldots\times\scO(k_r,1)\subset\hscO(m,r)
$$
then $Q(G)(\alpha):Q_m(G)\to Q_r(G)$ maps a representing tuple $(T_1,\ldots,T_m;g)$, 
where each $T_i$ is a decorated tree with lengths and $g\in G(\In(T_1)+\ldots+\In(T_m))$,
 to the element represented by $(T'_1,\ldots,T'_r;g)$. If $p=k_1+\ldots+k_{i-1}$ then $T'_i$ is
 obtained from $T_{p+1},\ldots,T_{p+k_i}$ by grafting their roots together and decorating the 
root of $T'_i$ by $\alpha_i\circ(\beta_1\oplus\ldots\oplus\beta_{k_i})$ where $\beta_j$ is the 
root decoration of $T_{p+j}$.

Like $M(G)$, the space $Q_n(G)$ is the quotient of a coend, namely the coend of the functor 
$$\scL\scO_{Q_n}: (\tmT^{\op})^n \xrightarrow{\scL\scO^n} \Top^n \xrightarrow{\product} \Top$$
and the functor
$$\lambda_{Q_n}: \tmT^n \to \Top,\quad (T_1,\ldots,T_n)\mapsto G(\In(T_1)+\ldots+\In(T_n)).$$

\begin{theo}\label{rect6} There are maps of $\hscO$-spaces, natural in $G$,
$$
\widehat{M(G)} \stackrel{\tau}{\longleftarrow} Q(G) \stackrel{\varepsilon}{\longrightarrow} G
$$
such that
\begin{enumerate}
	\item each $\varepsilon_n:Q_n(G)\to G(n)$ is a homotopy equivalence,
	\item if $\scO$ is $\Sigma$-free and $G$ is special, each $\tau_n:Q_n(G)\to M(G)^n$ is a homotopy
	equivalence.
\end{enumerate}
\end{theo}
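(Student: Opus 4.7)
The plan is to define $\varepsilon_n$ and $\tau_n$ explicitly on representatives, verify they descend to well-defined $\hscO$-maps natural in $G$, and then establish the two homotopy equivalences separately.

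For $\varepsilon_n$, given a representative $(T_1,\ldots,T_n;g)$, let $c_i\in\scO(\In(T_i))$ be the operadic composite of all node decorations of $T_i$ along its tree shape, and set $\varepsilon_n([T_1,\ldots,T_n;g]) = G(c_1\oplus\cdots\oplus c_n)(g)\in G(n)$. Shrinking preserves each $c_i$ by operadic associativity; the chopping relation is engineered so that $G(\widehat{c})(g)$ exactly absorbs the lost subtree contribution, which one checks via the identity $c_i = c_i'\circ(\id_{s'}\oplus c\oplus\id_{\In(T_i)-s'-t})$; equivariance uses operadic equivariance. For $\tau_n$, decompose $T_i = \Theta_{n_i}\circ((T_i)_1\oplus\cdots\oplus(T_i)_{n_i})$ and let $\pi_{i,j}^\ast\colon G(\In(T_1)+\cdots+\In(T_n))\to G(\In((T_i)_j))$ be the projection arising from the canonical inclusion of the $(T_i)_j$-inputs into the total input set. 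Set $\tau_n([T_1,\ldots,T_n;g]) = (y_1,\ldots,y_n)\in M(G)^n$ with $y_i = [(T_i;\pi_{i,1}^\ast(g),\ldots,\pi_{i,n_i}^\ast(g))]$. Naturality in $G$, well-definedness against the three relations, and $\hscO$-compatibility are then routine from the definitions.

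For (1), take as section $s_n\colon G(n)\to Q_n(G)$, $g\mapsto [\Theta_1(\id,0),\ldots,\Theta_1(\id,0);g]$, so that $\varepsilon_n\circ s_n = \id$. To exhibit $s_n\circ\varepsilon_n\simeq\id$, I would adapt the classical Boardman--Vogt contraction of the $W$-construction \cite{BV}: display $Q_n(G)$ as the realization of a simplicial space indexed by the height of the trees, whose face operations come from shrinking length-$0$ edges and chopping length-$1$ edges and whose degeneracies insert $\Theta_1(\id)$-nodes. The augmentation $Q_n(G)\to G(n)$ then admits an extra degeneracy supplied by the operadic unit and the inclusion $\scO\subset\hscO$, which forces the realization to retract onto $G(n)$. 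Equivalently, one filters $Q_n(G)$ by the total number of internal edges and contracts cell by cell using the cubical $I$-parameters in $\scL\scO$. This is the main obstacle: one must verify that the extra degeneracy interacts correctly with the global coupling parameter $g\in G(\In(T_1)+\cdots+\In(T_n))$, which links the $T_i$ in a manner that has no counterpart in the single-algebra Boardman--Vogt setting.

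For (2), specialness of $G$ supplies homotopy inverses $\mu_k\colon G(1)^k\to G(k)$ to $(\iota_1^\ast,\ldots,\iota_k^\ast)$ for each $k$. I would build a homotopy inverse $\rho_n\colon M(G)^n\to Q_n(G)$ to $\tau_n$ as follows: given $(y_1,\ldots,y_n)$ with representatives $y_i = [(T_i;h_{i,1},\ldots,h_{i,n_i})]$, project each $h_{i,j}$ via $(\iota^\ast)$ into $G(1)^{\In((T_i)_j)}$, concatenate across all $(i,j)$ to land in $G(1)^{\In(T_1)+\cdots+\In(T_n)}$, apply $\mu$ to obtain $g\in G(\In(T_1)+\cdots+\In(T_n))$, and output $[T_1,\ldots,T_n;g]$. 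By construction $(\iota^\ast)\circ\pi_{i,j}^\ast(g) = (\iota^\ast)(h_{i,j})$, so specialness gives $\pi_{i,j}^\ast(g)\simeq h_{i,j}$, and both $\tau_n\rho_n$ and $\rho_n\tau_n$ are homotopic to the identity through the relations of $Q_n(G)$ and $M(G)$. The $\Sigma$-freeness of $\scO$ enters via Lemma \ref{operads6} to guarantee that the quotients by symmetric group actions appearing in the definitions of $Q_n(G)$ and $M(G)^n$ are homotopically well behaved.
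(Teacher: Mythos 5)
Your definitions of $\varepsilon_n$ and $\tau_n$ agree with the paper's, and your section $s_n$ is the right one, but both halves of your argument have problems and the second is a genuine gap. For (1) you do not need the simplicial/extra-degeneracy machinery you sketch (and $Q_n(G)$ is not presented as the realization of a simplicial space, so that route would require a substantial reconstruction that you have not carried out); the paper's homotopy is completely explicit: let $T(t)$ be $T$ grafted onto a $\Theta_1$ decorated by $\id$, with the newly created internal edge given length $t$; then $t\mapsto (T_1(t),\ldots,T_n(t);g)$ is a well-defined homotopy on $Q_n(G)$ which at $t=0$ is the identity (shrinking relation) and at $t=1$ is $s_n\circ\varepsilon_n$ (chopping relation). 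The ``global coupling parameter'' $g$ that worries you is simply carried along unchanged by this homotopy, so there is nothing to check there.

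The serious gap is in (2). Your proposed inverse $\rho_n$ is not a well-defined map on $M(G)^n$: it is written in terms of chosen representatives $(T_i;h_{i,1},\ldots,h_{i,n_i})$ and a chosen homotopy inverse $\mu$ of the Segal maps, and since $\mu$ is only a homotopy inverse the formula does not respect the shrinking, chopping and equivariance relations defining $M(G)$ (for instance chopping replaces $h_{i,j}$ by $G(\widehat{c})(h_{i,j})$, and $\mu$ has no reason to intertwine this strictly); being ``homotopic on representatives'' does not descend to a homotopy on the quotient. The paper never constructs an inverse: it observes that $\tau_n$ is induced by the single map $\pi_n=(G(\sigma^\ast_{1,1}),\ldots,G(\sigma^\ast_{n,k_n}))$, which is an equivalence by specialness, writes both $Q_n(G)$ and $M(G)^n$ as quotients of $\coprod \scL\scO(T_1,\ldots,T_n)\times_{\Aut(T_1,\ldots,T_n)}(-)$ via \ref{free_3}, filters by the total number of internal edges, and proves $F_r(Q)\to F_r(M)$ is an equivalence by induction on $r$ using the gluing lemma, the equivariant cofibration $D(T_1,\ldots,T_n)\subset\scL\scO(T_1,\ldots,T_n)$, and the fact (Lemma \ref{rect_6b}) that $\underline{\scO}(T)$ is a numerable principal $\Aut(T)$-space. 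That is also where $\Sigma$-freeness genuinely enters --- through the numerable principal bundle facts of \ref{rect_6a}, not through Lemma \ref{operads6}, which concerns quotients of categories and plays no role here. Without some version of this filtration-and-gluing argument your proof of (2) does not close.
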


\begin{proof}
The map $\varepsilon_n:Q_n(G)\to G(n)$ is defined by chopping the roots of each tree.
 This makes sense in this case although roots are not internal edges. By construction, the 
$\varepsilon_n$ define a map $\varepsilon:Q(G)\to G$ of $\hscO$-spaces. 
Each $\varepsilon_n :Q_n(G)\to G(n)$ has a section
$$
s_n:G(n)\to Q_n(G),\quad g\mapsto (\Theta_1,\ldots,\Theta_1;g)
$$
with $\id\in\scO(1)$ as node decoration of $\Theta_1$. Let $T(t)$ be the tree obtained from $T$ by putting 
$T$ on top of $\Theta_1$ and giving the newly created internal edge the length $t$. Then for
 $(T_1,\ldots,T_n;g)\in\mathcal{L}\scO(T_1)\times\ldots\times\mathcal{L}\scO(T_n)\times G(\In(T_1)+\ldots+\In(T_n))$ 
we have
$$
(T_1(0),\ldots,T_n(0);g)\sim (T_1,\ldots,T_n;g)
$$
by the shrinking relation, and
$$
(T_1(1),\ldots,T_n(1);g)\sim s_n\varepsilon_n(T_1,\ldots,T_n;g)
$$
by the chopping relation. Hence $t\mapsto(T_1(t),\ldots,T_n(t);g)$ defines a homotopy 
from $\id_{Q_n(G)}$ to $s_n\circ\varepsilon_n$.

We define
$$
\tau:Q_n(G)\longrightarrow M(G)^n,\quad(T_1,\ldots,T_n;g)\longmapsto\left(T_i;G(\sigma^\ast_{i,1})(g),\ldots,G(\sigma^\ast_{i,k_i})(g) \right)^n_{i=1}
$$
if $T_i$ is of the form $T_i=\Theta_{k_i}\circ (T_{i,1}\oplus\ldots\oplus T_{i,k_i})$, and where 
 $\sigma_{i,j}: \underline{\In(T_{i,j})}\subset   \underline{\In(T_i)}\subset\underline{\In(T_1)+\ldots+\In(T_n)}$ is the canonical
 inclusion. By construction, the $\tau_n$ define a map of $\hscO$-spaces.

We now prove the second statement of the theorem. So assume that $G$ is special and $\scO$ is $\Sigma$-free. Then the map
$$
\pi_n: (G(\sigma^\ast_{1,1}),\ldots,G(\sigma^\ast_{n,k_n})): G(\In(T_1)+\ldots+\In(T_n))\to
\prod^n_{i=1}\prod^{k_i}_{j+1}G(\In(T_{i,j}))
$$
is a homotopy equivalence. For notational convenience we denote $G(\In(T_1)+\ldots+\In(T_n))$ by $G_Q(T_1,\ldots,T_n)$,
$\prod^n_{i=1}\prod^{k_i}_{j+1}G(\In(T_{i,j}))$ by $G_M(T_1,\ldots,T_n)$ and $\prod^n_{i=1}\Aut(T_i)$ by
$\Aut(T_1,\ldots,T_n)$. Similarly we denote 
$$\scL\scO_{Q_n}(T_1,\ldots,T_n)=\scL\scO(T_1)\times\ldots\times \scL\scO(T_n)$$
by $\scL\scO(T_1,\ldots,T_n)$.

 By Proposition \ref{free_3}, $M(G)^n$ is a quotient of 
$$\coprod_{([T_1],\ldots,[T_n])}(\scL\scO\times_{\Aut}G_M)(T_1,\ldots,T_n)$$
where
$$(\scL\scO\times_{\Aut}G_M)(T_1,\ldots,T_n)=\scL\scO(T_1,\ldots,T_n)\times_{\Aut(T_1,\ldots,T_n)}
G_M(T_1,\ldots,T_n)$$
and $Q_n(G)$ is a quotient of 
$$
\coprod_{([T_1],\ldots,[T_n])}(\scL\scO\times_{\Aut}G_Q)(T_1,\ldots,T_n)$$
where
$$(\scL\scO\times_{\Aut}G_Q)(T_1,\ldots,T_n)=\scL\scO(T_1,\ldots,T_n)
\times_{Aut(T_1,\ldots,T_n)} G_Q(T_1,\ldots,T_n).$$
In both cases the sum is indexed by the isomorphism classes in $\widetilde{\mT}^n$.

Hence the proof reduces to showing that
$$id\times_{\Aut} \pi_n:\coprod_{([T_1],\ldots,[T_n])}(\scL\scO\times_{\Aut}G_Q)(T_1,\ldots,T_n)\to
\coprod_{([T_1],\ldots,[T_n])}(\scL\scO\times_{\Aut}G_M)(T_1,\ldots,T_n)
$$
induces a homotopy equivalence $Q_n(G)\to M(G)^n$. (This part of the proof relies on certain technical facts about
numerable principal bundles that we list in \ref{rect_6a}.)

We choose a representative $T$ in each isomorphism class $[T]$ and filter both spaces. Let $F_r(Q)$ and $F_r(M)$ be the subspaces of
$Q_n(G)$ and $ M(G)^n$ of those points which can be represented by elements for which the $(T_1,\ldots,T_n)$-part consists of trees whose total number 
of internal edges is less than or equal to $r$. We prove by induction that the above map induces a homotopy equivalence
$F_r(Q)\to F_r(M)$ for all $r$, which in turn implies the result.

$F_0(Q)$ is the disjoint union of spaces 
$$(\scO(k_1)\times \ldots\times \scO(k_n))\times_{\Sigma_{k_1}\times\ldots\times \Sigma_{k_n}} G(k_1+\ldots + k_n)$$
and $F_0(M)$ is the disjoint union of spaces 
$$(\scO(k_1)\times \ldots\times \scO(k_n))\times_{\Sigma_{k_1}\times\ldots\times \Sigma_{k_n}} G(1)^{k_1+\ldots + k_n}.$$
Here observe that $\Aut(\Theta_k)=\Sigma_k$. Since $\scO(k_1)\times \ldots\times \scO(k_n)$ is a numerable principal
$(\Sigma_{k_1}\times\ldots\times \Sigma_{k_n})$-space, $\id\times \pi_n$ defines a homotopy equivalence $F_0(Q)\to F_0(M)$ by
\ref{rect_6a}(2).

Now assume that we have shown that $\id\times_{\Aut} \pi_n$ induces a homotopy equivalence $F_{r-1}(Q)\to F_{r-1}(M)$. We obtain
$F_r(Q)$ from $F_{r-1}(Q)$ and $F_r(M)$ from $F_{r-1}(M)$ by attaching spaces $(\scL\scO\times_{\Aut}G_Q)(T_1,\ldots,T_n)$
respectively $(\scL\scO\times_{\Aut}G_M)(T_1,\ldots,T_n)$, where $(T_1,\ldots,T_n)$ have exactly a total of $r$ internal edges.
 In both cases, an element in the attached spaces
represents an element of lower filtration if and only if an internal edge in $(T_1,\dots,T_n)$ is of length $0$ or $1$, because in these cases
the shrinking respectively the chopping relation applies. Let $D(T_1,\ldots,T_n))\subset \scL\scO(T_1,\ldots,T_n))$ be the 
subspace of such decorated trees. The inclusion of this subspace is an $\Aut(T_1,\ldots,T_n)$-equivariant cofibration (cf. the product
pushout theorem \cite[p. 233]{BV}). Consider the diagram
$$\xymatrix{
F_{r-1}(Q)\ar[d] & (D\times_{\Aut}G_Q)(T_1,\ldots,T_n)\ar[l]_(.63){\proj}\ar[r]^{i_Q}
\ar[d] & (\scL\scO \times_{\Aut}G_Q)(T_1,\ldots,T_n) \ar[d] \\
F_{r-1}(M) & (D\times_{\Aut}G_M)(T_1,\ldots,T_n)\ar[l]_(.63){\proj}\ar[r]^{i_M}
 & (\scL\scO \times_{\Aut}G_M)(T_1,\ldots,T_n)
}
$$
where the vertical maps are induced by $\id\times_{\Aut} \pi_n$. The maps $i_Q$ and $i_M$ are closed cofibrations (cf. \cite[p. 232]{BV}). We will prove in Lemma \ref{rect_6b}
that $\underline{\scO}(T)$ is a numerable principal $\Aut(T)$-space. (Recall that $\underline{\scO}:\mathbb{T}^{\op}\to\Top$ was
defined in Section \ref{free} for the collection $\scK=\scO$.)
Hence $\underline{\scO}(T_1)\times\ldots\times \underline{\scO}(T_n)$ is a numerable
principal $\Aut(T_1,\ldots,T_n) $-space (cf. \ref{rect_6a}(4)). Since there are equivariant maps 
$$D(T_1,\ldots,T_n))\rightarrow \scL\scO(T_1,\ldots,T_n))\xrightarrow{\textrm{forget}}\underline{\scO}(T_1)\times\ldots\times \underline{\scO}(T_n)$$
the spaces $D(T_1,\ldots,T_n)$ and $\scL\scO(T_1,\ldots,T_n)$ are numerable principal $\Aut(T_1,\ldots,T_n) $-spaces (cf. \ref{rect_6a}(1)) Hence the 
vertical maps of the diagram are homotopy equivalences (cf. \ref{rect_6a}(2)), and the gluing lemma implies that
$F_r(Q)\to F_r(M)$ is a homotopy equivalence.
\end{proof}

\begin{leer}\label{rect_6a}
 \textbf{Facts about numerable principal $\Gamma$-spaces}
 The following results are either fairly obvious or can be found in the appendix of \cite{BV}. Let $\Gamma$ be a discrete group and $X$ a numerable
 principal right $\Gamma$-space.
 \begin{enumerate}
  \item If $f:Y\to X$ is a $\Gamma$-equivariant map, then $Y$ is a numerable principal $\Gamma$-space. Moreover, $f$ is an equivariant homotopy equivalence
  if and only if it is an ordinary homotopy equivalence of underlying spaces.
  \item If $f:Y\to Z$ is an equivariant map of left $\Gamma$-spaces, which is an ordinary homotopy equivalence of underlying spaces, then
  $\id\times_\Gamma f:X\times_\Gamma Y\to X\times_\Gamma Z$ is a homotopy equivalence.
  \item If $H$ is a subgroup of $\Gamma$, then $X$ is a numerable principal $H$-space.
  \item If $Y$ is a numerable principal right $\Gamma'$-space, then $X\times Y$ is a numerable principal right $\Gamma\times\Gamma'$-space.
  \end{enumerate}
\end{leer}

\begin{lem}\label{rect_6b}
If $\scO$ is a $\Sigma$-free topological operad, then $\underline{\scO}(T)$ is a numerable principal $\Aut(T)$-space.
\end{lem}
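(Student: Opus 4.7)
The plan is to induct on the number of nodes of $T$. The trivial tree is immediate since $\underline{\scO}(T)=\ast$ and $\Aut(T)=\{1\}$. For the inductive step, I would write $T = \Theta_n \circ (T_1 \oplus \cdots \oplus T_n)$, giving the product decomposition $\underline{\scO}(T) = \scO(n) \times \prod_i \underline{\scO}(T_i)$. Any $\phi \in \Aut(T)$ factors uniquely as $\sigma \circ (\phi_1 \oplus \cdots \oplus \phi_n)$ with $\sigma \in \Sigma_n$ satisfying $T_{\sigma(i)} \cong T_i$ and $\phi_i : T_{\sigma(i)} \to T_i$. This yields a semidirect product
$$\Aut(T) \;\cong\; H \rtimes K, \qquad H := \prod_i \Aut(T_i), \qquad K := \{\sigma \in \Sigma_n \mid T_{\sigma(i)} \cong T_i\ \forall\, i\},$$
with $H$ the kernel and $K$ the image of the obvious homomorphism $\Aut(T)\to\Sigma_n$.

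The strategy is a two-stage argument exploiting this semidirect product structure. First I would show that $\underline{\scO}(T)$ is a numerable principal $H$-space: by the inductive hypothesis each $\underline{\scO}(T_i)$ is numerable principal $\Aut(T_i)$, so the product $\prod_i \underline{\scO}(T_i)$ is numerable principal $H$ (by \ref{rect_6a}(4)), and multiplying by the $H$-trivial factor $\scO(n)$ preserves this since the projection back to $\prod_i \underline{\scO}(T_i)$ is $H$-equivariant (apply \ref{rect_6a}(1)). Second, I would show the orbit space
$$\underline{\scO}(T)/H \;\cong\; \scO(n) \times \prod_i \bigl(\underline{\scO}(T_i)/\Aut(T_i)\bigr)$$
is numerable principal $K$: the hypothesis that $\scO$ is $\Sigma$-free gives that $\scO(n)$ is numerable principal $\Sigma_n$, hence numerable principal $K$ for the subgroup $K\subseteq\Sigma_n$ by \ref{rect_6a}(3); the $K$-equivariant projection onto the $\scO(n)$-factor together with \ref{rect_6a}(1) then upgrades the full product to a numerable principal $K$-space.

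The step I expect to require the most care is the final assembly, deducing from these two stages that $\underline{\scO}(T)$ is numerable principal $\Aut(T)=H\rtimes K$. This is the standard tower-of-numerable-principal-bundles principle, which is not stated explicitly in \ref{rect_6a} but can be established directly: from numerations $\{u_\alpha\}$ of the $H$-bundle $X\to X/H$ with local $H$-trivialisations, and numerations $\{v_\beta\}$ of the $K$-bundle $X/H\to X/G$ with local $K$-trivialisations (where $X=\underline{\scO}(T)$, $G=H\rtimes K$), one forms the product numeration $\{u_\alpha\cdot (v_\beta\circ\pi_H)\}$ and composes the local sections to produce $G$-equivariant local trivialisations of the composite $X\to X/G$. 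Once this principle is in hand, the induction closes and the lemma follows.
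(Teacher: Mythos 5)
Your proposal is correct and follows essentially the same route as the paper: induction on the tree, the semidirect product decomposition $\Aut(T)\cong\bigl(\prod_i\Aut(T^i)^{k_i}\bigr)\rtimes\bigl(\prod_i\Sigma_{k_i}\bigr)$, and the combination of the $\Sigma$-freeness of $\scO(n)$ (restricted to the permutation part) with the inductive hypothesis on the subtrees. The only divergence is the final assembly: where you invoke a general tower-of-numerable-principal-bundles principle, the paper instead verifies the Boardman--Vogt criterion of \cite[Appendix 3.2]{BV} (an open cover with subordinate partition of unity satisfying $U\cdot h\cap U=\emptyset$ for $h\neq e$) directly on the product of the covers coming from the two factors of the semidirect product --- which is essentially the same computation your product numeration $u_\alpha\cdot(v_\beta\circ\pi_H)$ carries out.
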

\begin{proof}
We prove this by an inductive argument. If $T=\Theta_n$, then $\Aut(T)=\Sigma_n$ and $\underline{\scO}(T)=\scO(n)$. By assumption, 
$\scO(n)$ is a numerable principal $\Sigma_n$-space.

Now let $T=\Theta_n\circ (T_1\oplus\ldots\oplus T_n)$. The following calculation of $\Aut(T)$ is taken from \cite[p. 815]{Berger2}.
By choosing $T$ appropriately in $[T]$ we may assume that it has the form
$$T=\Theta_n\circ(T^1_1\oplus\ldots\oplus T^1_{k_1}\oplus\ldots\oplus T^l_1\oplus\ldots\oplus T^l_{k_l})$$
where $T^i_1,\ldots,T^I_{k_i}$ are copies of a planar tree $T^i$ and $T^i$ and $T^j$ are not isomorphic in $\mT$ for $i\neq j$.
Then $\Aut(T)$ is the semi-direct product
$$ \Aut(T)\cong \Aut(T^1)^{k_1}\times \ldots\times \Aut(T^l)^{k_l})\rtimes (\Sigma_{k_1}\times \ldots\times \Sigma_{k_l})=\Gamma_T
\rtimes \Sigma_T$$
where $\Sigma_{k_i}$ acts on $\Aut(T^i)^{k_i}$ by permuting the factors.

$\scO(n)$ is a numerable principal $\Sigma_T$-space because $\Sigma_T$ is a subgroup of $\Sigma_n$. By induction 
$\underline{\scO}(T^1)^{k_1}\times \ldots\times \underline{\scO}(T^l)^{k_l}$ is a numerable principal $\Gamma_T$-space. Denote  $\Theta_n$ by $T^0$. 
By \cite[Appendix 3.2]{BV} there are open covers $\scU^i=\{U^i_\alpha; \ \alpha\in A^i\}$ of $\underline{\scO}(T^i)$, $i=0,\ldots,l$ with
 subordinate partitions of unity $\{f^i_\alpha:\underline{\scO}(T^i)\to [0,1],\ \alpha\in A^i\}$
such that $U^i_\alpha\cdot h\cap U^i_\alpha=\emptyset$ for all $h\in H_i$ different from the unit, where $H_0=\Sigma_T$ and
$H_i=\Aut(T^i)$ for $i=1,\ldots,l$. The open cover
$$\scV=\{\{U^0_1\times U^1_1\ldots\times U^1_{k_1}\times\ldots\times U^l_1\times\ldots\times U^l_{k_l}\}$$
of $\underline{\scO}(T)$, where $U^i_j$ runs through the elements of $\scU^i$, satisfies the condition that
$$(U^0_1\times U^1_1\ldots\times U^l_{k_l})\cdot h\cap (U^0_1\times U^1_1\times\ldots\times U^l_{k_l})=\emptyset$$
for all $h\in\Gamma_T\rtimes \Sigma_T$ different from the unit. The product numeration obtained from the $f^i_\alpha$ provides a 
partition of unity subordinate to $\scV$. Now the the lemma follows from \cite[Appendix 3.2]{BV}.
\end{proof}

If $X$ is an $\scO$-algebra and $G=\widehat{X}$ then, by inspection, $Q_n(G)\cong Q_1(G)^n$, and  the $\hscO$-structure on
$Q(G)$ defines an $\scO$-algebra structure on $Q_1(G)$. The map $\tau_1: Q_1(G)\to M(G)$ is a homeomorphism of $\scO$-algebras, 
and $\varepsilon_1: Q_1(G)\to G(1)=X$ is a weak equivalence of $\scO$-algebras. Composing $\varepsilon_1$ with the inverse of
$\tau_1$ we obtain:

\begin{prop}\label{rect7}
 If $X$ is an $\scO$-algebra in $\Top$ then there is a natural weak equivalence of $\scO$-algebras
$$\overline{\varepsilon}: M(\widehat{X})\to X.$$
\end{prop}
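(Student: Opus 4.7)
The strategy is to verify the three-step chain indicated in the paragraph preceding the statement: $M(\widehat{X}) \xleftarrow{\tau_1} Q_1(\widehat{X}) \xrightarrow{\varepsilon_1} X$, showing that both maps are $\scO$-algebra homomorphisms, that $\tau_1$ is actually a homeomorphism (hence invertible), and that $\varepsilon_1$ is a homotopy equivalence. Then $\overline{\varepsilon} := \varepsilon_1\circ \tau_1^{-1}$ is the desired natural weak equivalence, with naturality inherited from the naturality of $\tau$ and $\varepsilon$ in Theorem \ref{rect6}.

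First I would establish the canonical isomorphism $Q_n(\widehat{X})\cong Q_1(\widehat{X})^n$. When $G=\widehat{X}$ we have $G(k)=X^k$ and the morphisms $G(\sigma^\ast)$ are the canonical projections. Hence on each summand indexed by an $n$-tuple $(T_1,\ldots,T_n)$,
\[G\bigl(\In(T_1)+\cdots+\In(T_n)\bigr)=X^{\In(T_1)+\cdots+\In(T_n)}\cong \prod_{i=1}^n X^{\In(T_i)}=\prod_{i=1}^nG(\In(T_i)),\]
and this splitting is respected by the shrinking, chopping, and equivariance relations since each of these relations is localised in a single $T_i$. The resulting homeomorphism is plainly natural with respect to the generating morphisms of $\hscO$. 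Transporting the $\hscO$-structure along this isomorphism equips $Q_1(\widehat{X})$ with an $\scO$-algebra structure: for $a\in \scO(n,1)$, the map $Q(\widehat{X})(a):Q_n(\widehat{X})\to Q_1(\widehat{X})$ is given by grafting the $T_i$ at a new root decorated by $a\circ(b_1\oplus\cdots\oplus b_n)$, which matches exactly the $\scO$-algebra structure on $M(\widehat{X})$ defined in Proposition \ref{rect5}. Consequently $\tau_1$ is an $\scO$-algebra map.

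Next I would observe that $\tau_1$ is a homeomorphism. Inspecting the proof of Theorem \ref{rect6}(2), the map $\pi_n$ becomes a canonical shuffle isomorphism when $G=\widehat{X}$: since $\sum_j\In(T_{i,j})=\In(T_i)$, both $G_Q(T_1,\ldots,T_n)=X^{\sum\In(T_i)}$ and $G_M(T_1,\ldots,T_n)=\prod_{i,j}X^{\In(T_{i,j})}$ are canonically the same product of copies of $X$. Thus $\tau_n$ is a homeomorphism for every $n$ without any $\Sigma$-freeness hypothesis. In particular $\tau_1$ is invertible. Finally, $\varepsilon_1:Q_1(\widehat{X})\to\widehat{X}(1)=X$ is a homotopy equivalence by Theorem \ref{rect6}(1), and is an $\scO$-algebra map because $\varepsilon$ is a map of $\hscO$-diagrams. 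Setting $\overline{\varepsilon}=\varepsilon_1\circ\tau_1^{-1}$ yields the claim.

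The only real care needed is in the second paragraph: one must check that the two a priori different $\scO$-algebra structures on $Q_1(\widehat{X})$ — the one induced by $Q_n(\widehat{X})\cong Q_1(\widehat{X})^n$ from the $\hscO$-structure on $Q(\widehat{X})$, and the one transported along $\tau_1$ from $M(\widehat{X})$ — actually agree. This is a direct unpacking of how grafting in the definition of $Q(G)(\alpha)$ reproduces the grafting in Proposition \ref{rect5}, and is the only nontrivial bookkeeping in the argument.
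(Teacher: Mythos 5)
Your proposal is correct and follows exactly the paper's own argument: the paper likewise observes that $Q_n(\widehat{X})\cong Q_1(\widehat{X})^n$ by inspection, that the $\hscO$-structure on $Q(\widehat{X})$ makes $Q_1(\widehat{X})$ an $\scO$-algebra, that $\tau_1$ is a homeomorphism of $\scO$-algebras, and that $\overline{\varepsilon}=\varepsilon_1\circ\tau_1^{-1}$. You have merely spelled out the bookkeeping (the compatibility of the splitting with the relations and the agreement of the two algebra structures) that the paper leaves implicit.
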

\section{Tree-indexed diagrams}\label{diagram}

In order to adapt the rectification construction described in Section \ref{simpleversion}
to the case of categories, we will recast the topological version described there
into a homotopy colimit construction of a certain diagram.  The same diagram
makes sense in $\Cat$, where we will apply the Grothendieck construction, which is
the analog of the homotopy colimit in $\Cat$.

\begin{leer}\label{diagram1}
\textbf{The indexing category} $\scI$: As in the previous section, our construction
involves trees with a root vertex. So 
the objects of $\scI$ are the isomorphism classes $[T]$ of planar trees in $\tmT$. 
The shrinking and chopping relations of Definition \ref{rect4} correspond to
morphisms in the diagram to be constructed. So the generating morphisms of $\scI$ are of two types.

(1) Shrinking an internal edge.\\
(2) Chopping off a subtree above any node $\nu$ of a tree:\\
\raisebox{-20pt}{\qquad\qquad}
$\xymatrix@=5pt@M=-1pt@W=-1pt{
T_1 &\dots &T_{i-1} &T_i &T_{i+1}&\dots &T_k\\
\ar@{-}[ddddrrr]& &\ar@{-}[ddddr] &\ar@{-}[dddd] &\ar@{-}[ddddl] &&\ar@{-}[ddddlll]\\
\\
\\
\\
&&&{\scriptscriptstyle\bullet}\ar@{-}[ddd]^(.2){v}\\
\\
\\
&&&
}$
\raisebox{-30pt}{$\qquad\longrightarrow$}
$\xymatrix@=5pt@M=-1pt@W=-1pt{
T_1 &\dots &T_{i-1} &\qquad &T_{i+1}&\dots &T_k\\
\ar@{-}[ddddrrr]& &\ar@{-}[ddddr] &\ar@{-}[dddd] &\ar@{-}[ddddl] &&\ar@{-}[ddddlll]\\
\\
\\
\\
&&&{\scriptscriptstyle\bullet}\ar@{-}[ddd]^(.2){v}\\
\\
\\
&&&
}$\newline
That is, the subtree $T_i$ in the original tree is replaced by a single input edge in the new tree.
\end{leer}
To define a general morphism in $\scI$ we introduce the notion of a \textit{marked tree}. A marked tree is 
a planar tree $S$ with a marking of 
some (possibly none) of its internal edges with either the symbol $s$ or the symbol
$c$ subject to the constraint that an edge which is anywhere above an edge marked $c$ is left
unmarked. A morphism in $\scI$ is an isomorphism class of marked trees with respect to non-planar
isomorphisms respecting the marking. The source of such a morphism $f$ is the isomorphism class of 
the underlying unmarked planar tree. Let $S$ be a marked tree representing $f$ and let $T$ be the unmarked tree
obtained from $S$ by first chopping off the branches above every edge marked $c$.
[Note that this map would discard any markings of edges above such an
edge, which accounts for the constraint.] Then one shrinks all edges marked $s$.
The isomorphism class of $T$ is the target of $f$. By construction, $[T]$ is independent of the choice of
the representative $S$ of $f$.
In most cases, there is at most one morphism between objects of $\scI$.  However there are
exceptions.  For instance
$$
\begin{tikzpicture}
\draw (0,0) -- (0,0.5) -- (0,1) -- (0,1.5);
\draw(0,0.5) circle (1pt);
\draw(0,1) circle (1pt);
\draw[] (0,0.75)  node[right] {$c$};
\end{tikzpicture}
\hspace{20ex}
\begin{tikzpicture}
\draw (0,0) -- (0,0.5) -- (0,1) -- (0,1.5);
\draw(0,0.5) circle (1pt);
\draw(0,1) circle (1pt);
\draw[] (0,0.75)  node[right] {$s$};
\end{tikzpicture}
$$
represent distinct morphisms with the same source and target.

The following remark will make the definition of the composition easy.
\begin{rema}\label{diagram2}
In the sequel we will need to prescribe a consistent way of representing simplices
in the nerve of $\scI$ by a chain of marked planar trees.
Given an $n$-simplex
$$[T_0]\to [T_1]\to \ldots \to [T_n]$$
in the nerve of $\scI$. Pick a planar representative $T_0\in[T_0]$. Any morphism $[T_0]\to[T_1]$ is represented by a
marking of $T_0$. By applying the edge shrinking and chopping specified by the marking of $T_0$, we obtain
a well-defined planar representative $T_1$ of $[T_1]$. 
Now apply the same procedure to the map $[T_1]\to [T_2]$ and carry on to obtain a sequence
$$T_0\to T_1\to \ldots \to T_n \quad\textrm {representing}\quad [T_0]\to [T_1]\to \ldots \to [T_n],$$
where the maps  $T_i\to T_{i+1}$ are given by a marking of $T_i$.
 If we had picked a different representative $T_0'\in[T_0]$,
then there is an isomorphism $\phi:T_0 \to T_0'$ in $\tmT$, which transports the marking of $T_0$ to a marking of $T_0'$,
and the marked tree$T_0'$ also represents the morphism $[T_0]\to [T_1]$ .  Clearly
$\phi$ can be extended to the whole sequence of representatives in a
unique way.\end{rema}

To define the composition of $f:[S]\to [T]$ and $g:[T]\to [U]$, we take a representing chain
$S\to T\to U$ with a marking of $S$ and a marking of $T$. Let $E(S)$ and $E(T)$ be the sets of internal
edges of $S$ respectively $T$. Since $T$ is obtained from $S$ by shrinking and chopping off internal edges
we may consider $E(T)$ as a subset of $E(S)$. Observe that the marked edges of $S$ do not lie in $E(T)$. So
the marking of $T$ defines a marking on edges of $S$ which have not been marked before. We now erase in 
this larger marking any mark above an edge marked with $c$ to satisfy our constraint. The resulting marking
of $S$ represents the composition $g\circ f$.

\begin{leer}\label{diagram3}
 \textbf{The diagram:} Let $\scO$ be an arbitrary operad in $\scS$ and let $G:\hscO\to \scS$ be an $\hscO$-diagram in $\scS$,
 where $\scS$ is $\Cat,\ \Top, \ \Sets$ or $\SSets$. We are going to define a diagram
 $$F^G:\scI \to \scS.$$
 We recall the functor $\underline{\scO}: \mT^{\op}\to \scS$
 from Section \ref{free}. The definition of the functor $\lambda_G:\tmT\to \Top$ in \ref{rect1} makes also sense if we replace
 $\Top$ by $\scS$. We define
 $$F^G([T])=\underline{\scO}\otimes_{[T]}\lambda_G,$$
 the coend obtained by restricting of the functors to the isomophism class $[T]\subset \tmT$.
 
 As in \ref{rect2} we have the following explicit description of an element in $F^G([T])$. The object
 $$W(G,T)= \underline{\scO}(T)\times G(\In(T))$$
 replaces $V(G,T)$. If $T$ has the form \ref{rect2a}
we define
$$F^G([T])=\left(\coprod_{T\in[T]}\scO(n)\times \prod_{i=1}^n W(G,T_i)\right)/\sim \ \ = \left(\coprod_{T\in[T]}\underline{\scO}(T)\times \prod_{i=1}^nG(\In(T_i))\right)/\sim$$
 where the relation is the equivariance relation \ref{rect3} with the factor $I^n$ dropped.
 \end{leer}
Next we describe $F^G$ on the generating morphisms of $\scI$.

(1) Suppose $\alpha: [T]\to[T']$ is shrinking a bottom edge of $[T]$. So $\alpha$ is represented by $T$ with a single marking $s$
of the edge connecting the root node to a subtree $T_i$ of $T$.

\begin{minipage}{5.8cm}
\begin{tikzpicture}
[level distance=15mm,
every node/.style={fill=white,circle,inner sep=0.5pt},
level 1/.style={sibling distance=10mm,nodes={fill=black}},
level 2/.style={sibling distance=8mm,nodes={fill=white}},
level 3/.style={sibling distance=5mm,nodes={fill=white}},
level 4/.style={sibling distance=10mm,nodes={fill=white}}]
\node[white] {}[grow=north]
child {node[draw] (above node) {}
child {node{$T_n$}} 
child [missing] {}
child {node[draw, fill=black] {}
child {node {$T_{i_r}$}}
child [missing] {}
child [missing] {}
child {node {$T_{i_2}$}}
child [missing] {}
child {node {$T_{i_1}$}}
}
child [missing] {}
child [missing] {}
child {node {$T_2$}}
child {node {$T_1$}}
};
\draw[] (1.6,2.65) node [above] {$\cdots$};
\draw[] (-0.8,2.65) node [above] {$\cdots$};
\draw[] (1.3,4) node [above] {$\cdots$};
\draw[] (0.5,0.8) node [above] {root};
\end{tikzpicture}
\end{minipage}
$\xrightarrow{\alpha}$
\begin{minipage}{6.0cm}
\begin{tikzpicture}
[level distance=15mm,
every node/.style={fill=white,circle,inner sep=0.5pt},
level 1/.style={sibling distance=10mm,nodes={fill=black}},
level 2/.style={sibling distance=8mm,nodes={fill=white}},
level 3/.style={sibling distance=5mm,nodes={fill=white}}]
\node[white] {}[grow=north]
child {node[draw] (above node) {}
child {node {$T_n$}}
child [missing] {}
child {node {$T_{i_r}$}}
child [missing] {}
child {node {$T_{i_2}$}}
child {node {$T_{i_1}$}}
child [missing] {}
child {node {$T_2$}}
child {node {$T_1$}}};
\draw[] (2.3,2.65) node [above] {$\cdots$};
\draw[] (0.7,2.65) node [above] {$\cdots$};
\draw[] (-1.6,2.65) node [above] {$\cdots$};
\draw[] (0.5,0.8) node [above] {root};
\end{tikzpicture}
\end{minipage}

The corresponding morphism $F^G(\alpha)$ is induced by the map
$$ 
\underline{\scO}(T)\times \prod_{j=1}^nG(\In(T_j))\to \underline{\scO}(T')\times 
\prod_{j=1}^{i-1}G(\In(T_j))\times \prod_{k=1}^rG(\In(T_{i_k}))\times
\prod_{j=i+1}^{n}G(\In(T_j))$$
which sends a decorated tree $T\in \underline{\scO}(T)$ to the decorated tree $T'$ obtained from $T$ as in the shrinking relation of Definition
\ref{rect4}(1b) disregarding lengths. On the other factors the map is given by identities and the map $\tau^\ast$ of Definition \ref{rect4}(1b) .

(2) Shrinking a nonbottom edge corresponds under $F^G$ to the map $(T;g_1,\ldots,g_n)\mapsto (T';g_1,\ldots,g_n)$ where $T'$ is obtained
from $T$ as in (1).

(3) Let $\tau:[T]\to[T']$ be a chopping morphism, represented by a tree $T$ of the form \ref{rect2a} with
 exactly one marked edge $e$ with marking $c$. This edge belongs to some subtree
$T_i$ of $T$, it could be its root. Then
$F^G(\tau)$ is induced by the map
$$
H: \underline{\scO}(T)\times \prod_{j=1}^nG(\In(T_j))\to \underline{\scO}(T')\times\prod_{j=1}^{i-1}G(\In(T_j))
\times G(\In(T'_i))\times \prod_{j=i+1}^{n}G(\In(T_j))$$
where $T'$ is obtained from $T$ and $T'_i$ from $T_i$ by deleting the subtrees with root edge $e$ (if $e$ is 
the root of $T_i$ then $T'_i$ is the trivial tree).
The map $H$ is given on $\underline{\scO}(T)\to \underline{\scO}(T')$ by the projection (the set of decorated nodes in $T'$ is a 
subset of the set of decorated nodes in $T$),
and on the other factors by the identities and the map $G(\widehat{c})$ of Definition \ref{rect4}(2).

In each case the equivariance relations on the operad $\scO$ and the functoriality of
$G$ imply that the definition of $F^G$ on the morphisms of $\scI$ does not depend on the
choice of representatives and that $F^G$ is a well-defined functor.
 \begin{leer}\label{diagram5}
 \textbf{A relative version:}
There is a relative version of this construction with respect to a map of operads
$\varphi:\scO\to\mathcal{P}$ in $\scS$.  Again let $G:\hscO\to \scS$ be an $\hscO$- diagram in $\scS$.  We then define the
functor $F^G_\varphi:\scI\to \scS$ in
exactly the same way as we defined $F^G$, except that for $F^G_\varphi[T]$ the bottom node of a representing decorated $T$ is decorated
with an element of $\mathcal{P}(k)$ instead of $\scO(k)$. Thus
$$F^G_\varphi([T])=\left(\coprod_{T\in[T]}\mathcal{P}(k)\times \prod_{i=1}^k W(G,T_i)\right)/\sim$$
with the equivariance relation as above. On morphisms 
$F^G_\varphi$ is defined in the same way as $F^G$, except that when we shrink
a bottom edge, we apply $\varphi$ to the element of $\scO$ decorating the node at the top of the
edge before we compose it with the element of $\mathcal{P}$ decorating the bottom node.
\end{leer}

\section{Homotopy colimits}\label{hocolim}

For a diagram $D:\scC\to \Cat$ in $\Cat$ the Grothendieck construction $\scC\int D$ is the category whose objects are 
pairs $(c,X)$ with $c\in \ob \scC$ and $X\in \ob D(c)$. A morphism $(c,X)\to (c',X')$ is a pair $(j,f)$ consisting of
a morphism $j:c\to c'$ in $\scC$ and a morphism $f:D(j)(X)\to X'$ in $D(c')$. Composition is the obvious one.

\begin{prop}\label{hocolim1} If $\scO$ is an operad in $\Cat$ and $G: \widehat{\scO}\to \Cat$ is an $ \widehat{\scO}$-category then
$\scI\int F^G$ is an $\scO$-algebra.
\end{prop}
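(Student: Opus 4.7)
The plan is to mimic the topological rectification of Proposition \ref{rect5}, where the $\scO$-algebra structure on $M(G)$ was defined by grafting decorated trees. The Grothendieck construction $\scI\int F^G$ in $\Cat$ plays the role that the quotient by shrinking and chopping relations played for $M(G)$ in $\Top$: instead of identifying trees related by shrinking and chopping, we now have morphisms between them (coming from the generating morphisms of $\scI$). Concretely, I would first describe the action on objects as follows. An object of $\scI\int F^G$ is a pair $([T], x)$, where $x$ is represented by a decorated planar tree $T \in [T]$ of the form $T = \Theta_k \circ (T_1 \oplus \ldots \oplus T_k)$ with root decoration $b \in \ob \scO(k)$, together with objects $g_i \in \ob G(\In(T_i))$. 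Given $a \in \ob \scO(n)$ and objects $x^{(i)} = ([T^{(i)}], g^{(i)})$ with root decorations $b_i$, I set
$$a(x^{(1)}, \ldots, x^{(n)}) = \bigl([T], g^{(1)} \amalg \cdots \amalg g^{(n)}\bigr),$$
where $T = \Theta_n \circ (T^{(1)} \oplus \ldots \oplus T^{(n)})$ with new root decoration $a \circ (b_1 \oplus \ldots \oplus b_n)$, exactly as in the proof of Proposition \ref{rect5}.

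The essential new content is the action on morphisms. A morphism in $\scO(n) \times (\scI\int F^G)^n$ is a tuple $(\alpha; (j_1, f_1), \ldots, (j_n, f_n))$ with $\alpha: a \to a'$ in $\scO(n)$, $j_i: [T^{(i)}] \to [T'^{(i)}]$ in $\scI$ represented by a marked planar tree, and $f_i: F^G(j_i)(g^{(i)}) \to g'^{(i)}$ in $F^G([T'^{(i)}])$. I would define its image in $\scI\int F^G$ as the pair $(J, F)$, where $J$ is the marked tree $\Theta_n \circ (j_1 \oplus \ldots \oplus j_n)$, obtained by grafting the marked representatives of the $j_i$. Grafting preserves the marking constraint of \ref{diagram1}, since the root edges of the $T^{(i)}$, which become new internal edges of $T$, carry no markings and lie below any marked edge of $j_i$. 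The component $F$ in $F^G([T'])$ is the morphism whose restriction to each subtree $T'^{(i)}$ is the target-morphism component of $f_i$, and whose new root decoration morphism is $\alpha \circ (\beta_1 \oplus \ldots \oplus \beta_n)$, where $\beta_i$ is the morphism between root decorations of the $T'^{(i)}$ induced by $f_i$ after the shrink/chop $F^G(j_i)$ is applied.

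Finally I would verify (i) that $(J, F)$ is well-defined on equivalence classes of marked trees and coend classes of decorated trees, using the equivariance relations for $\scO$ and the functoriality of $G$; (ii) that this assignment defines a functor $\scO(n) \times (\scI\int F^G)^n \to \scI\int F^G$, where functoriality follows from associativity of grafting marked trees in $\scI$ and functoriality of operad composition in $\scO$; (iii) the operad axioms, namely $\Sigma_n$-equivariance (permuting the $x^{(i)}$ reorders the subtrees of the grafted tree and permutes the arguments of the root decoration, which matches the $\Sigma_n$-action on $\scO(n)$ via the equivariance relation \ref{rect3}(1)), associativity (grafting a tree-of-trees is the same as composing the grafts, matched by operad associativity on the root decorations), and unitality (the unit $\id \in \scO(1)$ produces $\Theta_1 \circ T^{(1)}$ with root decoration $\id \circ b_1 = b_1$, which represents the same class as $T^{(1)}$).

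The main obstacle will be checking that the morphism $(J, F)$ is well-defined and functorial, specifically its compatibility with the equivariance relations \ref{rect3} used to form the coends defining $F^G([T])$ and with the non-planar isomorphisms used to form isomorphism classes of marked trees in $\scI$. This is a direct but slightly tedious bookkeeping exercise; the key fact that makes it go through is that the operad composition and $\Sigma$-actions used in the equivariance relations for the decorations of internal nodes are exactly the data that also govern the morphisms in $\scO$, so the two notions of equivalence are compatible by construction.
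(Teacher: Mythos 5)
Your construction is essentially the paper's own proof: the action is defined by grafting the representing trees at a single merged root decorated by $a\circ(b_1\oplus\ldots\oplus b_n)$, with the $G$-data concatenated, and the paper likewise dispatches the morphism-level verification in one sentence. One small inaccuracy worth noting: since the roots are merged (as your decoration formula and your $G$-data $g^{(1)}\amalg\cdots\amalg g^{(n)}$ require), the root edges of the $T^{(i)}$ do not become new internal edges of $T$ --- the internal edges of the grafted tree are exactly the disjoint union of those of the $T^{(i)}$, with the bottom edges of each $T^{(i)}$ becoming bottom edges of $T$ --- which if anything makes your check of the marking constraint even more immediate.
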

\begin{proof}
 We define 
$$(*)\qquad\scO(m)\times_{\Sigma_m}\left(\scI\int F^G\right)^m\longrightarrow \scI\int F^G$$
as follows. A planar representative of an object on the left side of (*) looks like
\newline
$$
\left(A ,\left\{\left(T_i=
\begin{array}{c}
\begin{tikzpicture}
[level distance=10mm,
every node/.style={fill=white,circle,inner sep=0.5pt},
level 1/.style={sibling distance=10mm,nodes={fill=black}},
level 2/.style={sibling distance=10mm,nodes={fill=white}},]
\node[white] {}[grow=north]
child {node[draw] (above node) {}
child {node {$T_{ik_i}$}}
child [missing] {}
child {node {$T_{i2}$}}
child {node {$T_{i1}$}}
};
\draw[] (0.5,1.5) node [above] {$\cdots$};
\draw[] (0.35,0.5) node [above] {$B_i$};
\end{tikzpicture}
\end{array},\;\overline{C}_i
\right)\right\}^m_{i=1}\right)
$$
where $A$ is an object in $\scO(m)$, $T_i$ is a planar tree whose nodes are decorated by objects in the appropriate $\scO(k)$,
and $\overline{C}_i$ is an object in $G(\In(T_{i1}))\times\ldots\times G(\In(T_{ik_i}))$. The underlying tree of $T_i$ 
represents an object $[T_i]$ in $\scI$ and the pair $X_i=(T_i,\overline{C}_i)$ an object in $F^G([T_i])$.
We send this object to the object represented by
$$
\left(
\begin{array}{c}
\begin{tikzpicture}
[level distance=10mm,
every node/.style={fill=white,circle,inner sep=0.5pt},
level 1/.style={sibling distance=10mm,nodes={fill=black}},
level 2/.style={sibling distance=10mm,nodes={fill=white}},]
\draw[] (1.8,-0.85) node [above] {\scriptsize $A\circ(B_1\oplus B_2\oplus\cdots\oplus B_m)$};
\node[white] {}[grow=north]
child {node[draw] (above node) {}
child {node {$T_{m{k_m}}$}}
child [missing] {}
child {node {$T_{m1}$}}
child [missing] {}
child {node {$T_{1k_1}$}}
child [missing] {}
child {node {$T_{11}$}}
};
\draw[] (-2,1.7) node [above] {$\cdots$};
\draw[] (0,1.7) node [above] {$\cdots$};
\draw[] (2,1.7) node [above] {$\cdots$};
\end{tikzpicture}
\end{array},\quad(\overline{C}_1,\overline{C}_2,\ldots,\overline{C}_m)
\right)
$$
For later use we denote this representative by $\bar{\mu}(A ;X_1,\ldots ,X_m)$.
This map extends to morphisms: the $\scO(m)$ factor only affects $A$ while morphisms in $\scI\int F^G$ may result in operad compositions 
from the right of the $B_i$ with other node decorations of $T_i$, chopping or shrinking of internal edges and their affects on the $\overline{C}_i$.
\end{proof}

\begin{defi}\label{hocolim2}
 For a diagram $F: \mathcal{I}\to \Top $ of topological spaces we define $\hocolim_{\mathcal{I}} \ F$ to be the 2-sided
bar construction
$$\hocolim_{\mathcal{I}} \ F = B(\ast, \mathcal{I}, F)$$
where $\ast: \mathcal{I}^{\op}\to \Top$ is the constant diagram on a point (see \cite[3.1]{HV} for a list of properties of the 
the 2-sided bar construction). More explicitly, $B(\ast, \mathcal{I}, F)$
is the topological realization of the simplicial space
$$[n]\mapsto B_n(\ast, \mathcal{I}, F)=\coprod_{A,B} \mathcal{I}_n(A,B)\times F(A)$$
where $\mathcal{I}_n(A,B)\subset (\textrm{mor} \ \mathcal{I})^n$ is the subset of composable morphisms
$A\xrightarrow{f_1}\ldots \xrightarrow{f_n}B$. The degeneracy maps are defined as in the nerve of $\mathcal{I}$, the 
boundary maps $d^i:B_n(\ast, \mathcal{I}, F)\to B_{n-1}(\ast, \mathcal{I}, F)$ are defined as in the nerve for $i>0$,
while $d^0(f_1,\ldots,f_n;x))=(f_2,\ldots,f_n;F(f_1)(x))$.
\end{defi}

\begin{prop}\label{hocolim3} If $\scO$ is an operad in $\Top$ and $G: \widehat{\scO}\to \Top$ is an $ \widehat{\scO}$-space then
$\hocolim_{\scI}\ F^G$ is an $\scO$-algebra.
\end{prop}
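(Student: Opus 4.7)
The plan is to adapt the Grothendieck-construction argument of Proposition \ref{hocolim1} one simplicial level at a time, using the explicit bar-construction description of $\hocolim_{\scI}F^G = B(\ast,\scI,F^G)$ together with the fact that the realization of simplicial $k$-spaces commutes with finite products (so that $|B_\bullet|^m\cong|B_\bullet^{\times m}|$). For each $n\geq 0$ I would construct a continuous, $\Sigma_m$-equivariant map
\[\mu_m^{(n)}:\scO(m)\times B_n(\ast,\scI,F^G)^m\longrightarrow B_n(\ast,\scI,F^G),\]
verify that these assemble into a simplicial map, and then obtain the desired operad action by realization.

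To define $\mu_m^{(n)}$, take $A\in\scO(m)$ together with $m$ composable chains $[T_0^i]\to\cdots\to[T_n^i]$ in $\scI$ and elements $x_i\in F^G([T_0^i])$. Using Remark \ref{diagram2}, pick planar representatives $T_0^i\to\cdots\to T_n^i$ in which each arrow is a marking of the preceding tree. For each $j$ let $S_j$ be the tree obtained by grafting $T_j^1,\ldots,T_j^m$ at a single new root node whose decoration is the operadic composition $A\circ(B_j^1\oplus\cdots\oplus B_j^m)$, where $B_j^i\in\scO(k_j^i)$ is the root decoration of $T_j^i$. Since the internal edges of $S_j$ are exactly the disjoint union of the internal edges of the $T_j^i$, the markings transfer and yield a composable chain $[S_0]\to\cdots\to[S_n]$ in $\scI$. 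Build $x\in F^G([S_0])$ from the $x_i$ exactly as $\bar\mu(A;X_1,\ldots,X_m)$ is built in the proof of Proposition \ref{hocolim1}, and declare $\mu_m^{(n)}$ to send the input to $([S_0]\to\cdots\to[S_n],\,x)$. Independence of the planar representatives follows from the equivariance relations in $F^G$, and $\Sigma_m$-equivariance follows from that of the operad composition in $\scO$.

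To see that $\mu_m^{(\bullet)}$ is simplicial, observe that degeneracies insert identity morphisms, represented by empty markings, and these commute trivially with grafting. The face maps $d^j$ for $j>0$ compose adjacent morphisms in $\scI$; this amounts to combining their markings and erasing marks above chopped edges, an operation that commutes level by level with grafting. The step requiring real work is compatibility with $d^0$, which boils down to the identity
\[F^G(\tilde f_1)\bigl(\bar\mu(A;x_1,\ldots,x_m)\bigr)\;=\;\bar\mu\bigl(A;F^G(f_1^1)(x_1),\ldots,F^G(f_1^m)(x_m)\bigr),\]
where $\tilde f_1$ denotes the grafted morphism $[S_0]\to[S_1]$. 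This must be checked case by case against the definitions in \ref{diagram3} (shrinking a bottom edge, shrinking a non-bottom edge, chopping an edge); in each case the outer composition with $A$ is unaffected, while the induced action on the $G$-valued coordinates is governed only by the local structure at the edge in question (via $\tau^\ast$, $G(\widehat c)$, or plain identities) and hence agrees on the two sides. With this identity in hand, realization of $\mu_m^{(\bullet)}$ yields the continuous $\scO$-action on $\hocolim_{\scI}F^G$; the operad algebra axioms are inherited level by level from those of $\scO$ together with the equivariance relations built into $F^G$, exactly as in Proposition \ref{hocolim1}.
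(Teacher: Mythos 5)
Your proposal is correct and follows essentially the same route as the paper: the action is defined levelwise on the bar construction $B_\bullet(\ast,\scI,F^G)$ by merging the roots of the representing trees into a single root decorated by $A\circ(B^1\oplus\cdots\oplus B^m)$, transferring the markings (which is legitimate precisely because the internal edges of the grafted tree are the disjoint union of those of the inputs), and using $\bar\mu$ on the $F^G$-coordinates, after which realization preserves products. The paper merely asserts that this yields a simplicial object in $\scO$-algebras where you spell out the simplicial identities (in particular $d^0$), but the construction is identical.
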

\begin{proof}
 Since the classifying space functor preserves products it suffices to show that $B_\ast(\ast,\scI,F^G)$ is a simplicial object in
 the category of $\scO$-algebras. By Remark \ref{diagram2} an $m$-tuple of elements in $B_p(\ast,\scI,F^G)$ can be represented by
 sequences of marked trees
 $$\{(T_{j0}\xrightarrow{t_{j1}}\cdots \xrightarrow{t_{jp}} T_{jp};X_j)\}_{1\leq j\leq m}$$
 where $t_{jk}$ is $T_{jk}$ with a marking. An operation $a\in \scO(m)$ maps this $m$-tuple of elements to the element
 represented by
 $$(\mu(T_{10},\ldots,T_{n0})\xrightarrow{\mu(t_{11},\ldots,t_{n1})}\cdots \xrightarrow{\mu(t_{1p},\ldots,t_{np})} \mu(T_{1p},\ldots,T_{np}); 
 \bar{\mu}(a ;X_1,\ldots ,X_m))$$
 where $\mu(T_1,\ldots,T_n)$ is the tree obtained from $T_1,\ldots,T_n$ by gluing their roots together
 and $\mu(t_1,\ldots,t_n)$ the corresponding marked tree, while $\bar{\mu}
(a;X_1,\ldots,X_n)$ is defined as in the proof of Proposition \ref{hocolim1}.
\end{proof}

If $\scO$ is an operad in $\Cat$, then $B\scO$ is an operad in $\Top$ by Lemma \ref{operads6a}.

\begin{prop}\label{hocolim4} If $\scO$ is an operad in $\Cat$ and $G: \widehat{\scO}\to \Cat$ is an $ \widehat{\scO}$-category then
 $\hocolim_{\scT} \ B(F^G)$ is a $B\scO$-space.
\end{prop}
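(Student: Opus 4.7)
The plan is to mimic the proof of Proposition \ref{hocolim3}, but carry it out first at the level of simplicial objects in $\Cat$ and only then apply the classifying space functor $B:\Cat\to\Top$. By Lemma \ref{operads6a} we already know that $B\scO$ is an operad in $\Top$ and that $BG:\widehat{B\scO}\to\Top$ is a $\widehat{B\scO}$-diagram, so it is sensible to speak of $B\scO$-spaces. The simplicial space whose realization is $\hocolim_\scI BF^G=B(\ast,\scI,BF^G)$ has $p$-simplices
$$
B_p(\ast,\scI,BF^G)=\coprod_{A,B}\scI_p(A,B)\times BF^G(A).
$$
Since $B:\Cat\to\Top$ preserves products and discrete coproducts, this is canonically isomorphic to $B$ applied to the small category
$$
C_p=\coprod_{A,B}\scI_p(A,B)\times F^G(A),
$$
with each set $\scI_p(A,B)$ viewed as a discrete category. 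The face and degeneracy operators of the bar construction are functors at this level and make $C_\bullet$ into a simplicial object of $\Cat$ whose levelwise classifying space is $B_\bullet(\ast,\scI,BF^G)$.

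The main step is then to equip $C_\bullet$ with the structure of a simplicial $\scO$-algebra in $\Cat$. For this we copy the formulas from the proof of Proposition \ref{hocolim3} verbatim: given $a\in\scO(m)$ and an $m$-tuple of representatives $\bigl(T_{j0}\xrightarrow{t_{j1}}\cdots\xrightarrow{t_{jp}}T_{jp};X_j\bigr)_{j=1}^m$, we glue the chains of marked trees vertex by vertex via the root-grafting operation $\mu$ and combine the $X_j$ by means of the Grothendieck-level operation $\bar\mu(a;X_1,\ldots,X_m)$ introduced in the proof of Proposition \ref{hocolim1}. Because the construction is combinatorial in the trees and uses only operad composition inside $\scO$, each such assignment defines a \emph{functor} $\scO(m)\times C_p^m\to C_p$, not merely a map of underlying sets. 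The operad axioms, the $\Sigma_m$-equivariance, and the compatibility with the face and degeneracy functors of $C_\bullet$ are verified exactly as in the proof of Proposition \ref{hocolim3}.

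Once this is in place, the remainder is formal. Applying $B$ to $C_\bullet$ levelwise and using that $B$ preserves products, the simplicial $\scO$-algebra structure on $C_\bullet$ transports to a simplicial $B\scO$-algebra structure on $B_\bullet(\ast,\scI,BF^G)$. Taking topological realizations, again because realization preserves products, endows $|B_\bullet(\ast,\scI,BF^G)|=\hocolim_\scI BF^G$ with the structure of a $B\scO$-space. The only obstacle of any real substance is the verification in the second paragraph, and it amounts to noting that the combinatorial formulas which give continuous maps in the proof of Proposition \ref{hocolim3} already make sense as functors in $\Cat$; in particular nothing in the argument uses $\Sigma$-freeness of $\scO$.
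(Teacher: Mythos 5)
Your proposal is correct and follows essentially the same route as the paper: the paper defines an $N_\ast\scO$-action directly on the diagonal of the bisimplicial set $([p],[q])\mapsto N_p(\scI)\times N_q(F^G([T_0]))$ using the same grafting formulas $\mu$ and $\bar\mu$, which is just a reindexing of your levelwise $\scO$-action on the simplicial category $C_\bullet$ followed by the nerve. Both arguments reduce to the combinatorics already set up in Propositions \ref{hocolim1} and \ref{hocolim3} together with product-preservation of $B$ and of topological realization, and neither uses $\Sigma$-freeness.
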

\begin{proof} By definition, $\hocolim_{\scT} \ B(F^G)$ is the topological realization of the bisimplicial set
 $$ 
([p],[q])\mapsto N_p(\scI)\times N_q(F^G([T_0]))
$$
where $N$ is the nerve functor.
An element in $N_p(\scI)\times N_q(F^G([T_0]))$ is a pair 
$$
([T_0]\xrightarrow{[t_1]}\cdots \xrightarrow{[t_p]} [T_p],
X_0\xrightarrow{x_1}\cdots \xrightarrow{x_q} X_q)
$$
 where $[T_0]\xrightarrow{t_1}\cdots \xrightarrow{t_p} [T_p]$ is a 
sequence of morphisms in $\scI$ and $X_0\xrightarrow{x_1}\cdots \xrightarrow{x_q} X_q$ is a sequence of morphisms in the
category $F^G([T_0])$.
We define an operation of $N_\ast\scO$ on its diagonal: The element $(A_0\xrightarrow{\alpha_1}\cdots \xrightarrow{\alpha_p} A_p)\in N_p\scO(n)$ maps 
the $n$-tuple represented by
$$\left\{\left( \begin{array}{c}
                       T_{j0}\xrightarrow{t_{j1}}\cdots \xrightarrow{t_{jp}} T_{jp}   \\
 X_{j0}\xrightarrow{x_{j1}}\cdots \xrightarrow{x_{jp}} X_{jp}
                          \end{array}
\right)\right\}_{1\leq j\leq n}
$$
to the object represented by the pair of sequences
$$
\left(\begin{array}{c}
       \mu(T_{10},\ldots,T_{n0})\xrightarrow{\mu(t_{11},\ldots,t_{n1})}\cdots \xrightarrow{\mu(t_{1p},\ldots,t_{np})} \mu(T_{1p},\ldots,T_{np})\\
\bar{\mu}(A_0;X_{10},\ldots,X_{n0})\xrightarrow{\bar{\mu}(\alpha_1;x_{11},\ldots,x_{n1})}\cdots 
\xrightarrow{\bar{\mu}(\alpha_p;x_{1p},\ldots,x_{np}\bar{\mu}}(A_p;X_{1p},\ldots,X_{np})
      \end{array}
\right)
$$
\end{proof}

In degree $p$ the nerve $N_\ast(\scI\int F^G)$ consists of diagrams
$$
([T_0],X_0)\xrightarrow{([t_1],x_1)}\cdots\xrightarrow{([t_p],x_p)} ([T_p],X_p)
$$
with $X_i\in F^G([T_i])$, $[t_i]:[T_{i-1}]\to [T_i]$ in $\scI$, and $x_i:[t_i](X_{i-1})\to X_i$ in $F^G([T_i])$. We always tacitly assume that
the representing trees $T_i$ and the marked trees $t_i$ are chosen as in Remark \ref{diagram2}. The 
$\scO$-structure on $\scI\int F^G$ defined in the proof of Proposition \ref{hocolim1} translates to an $N_\ast\scO$-structure on
$N_\ast(\scI\int F^G)$ as follows:
If $A_0\xrightarrow{\alpha_1}\cdots \xrightarrow{\alpha_p} A_p$ is
an element in $N_p\scO(n)$ it maps an $n$-tuple
$$
\left\{ ([T_{j0}],X_{j0})\xrightarrow{([t_{j1}],x_{j1})}\cdots\xrightarrow{([t_{jp}],x_{jp})} ([T_{jp}],X_{jp})\right\}_{1\leq j\leq n}
$$
to 
$$
([\mu(T_{10},\ldots,T_{n0})],\bar{\mu}(A_0;X_{10},\ldots,X_{n0}))\rightarrow\cdots\rightarrow ([\mu(T_{1p},\ldots,T_{np})],\bar{\mu}(A_p;X_{1p},\ldots,X_{np}))
$$
in the notation above with the obvious maps.

Thomason \cite{T} constructed a natural weak equivalence $\eta:\hocolim_\scI \ B(F^G)\to B(\scI\int F^G)$ defined on nerves by mapping 
$$
\left(\begin{array}{c}
                          [T_0]\xrightarrow{[t_1]}\cdots \xrightarrow{[t_p]} [T_p]\\
 X_0\xrightarrow{x_1}\cdots \xrightarrow{x_p} X_p
                          \end{array}
\right)
$$
to
$$
([T_0],X_0)\xrightarrow{([t_1],[t_1](x_1))}\cdots \xrightarrow{([t_p],[t_p]\circ \ldots\circ [t_1](x_p))} ([T_p],[t_p]\circ \ldots\circ [t_1]X_p)).
$$

\begin{prop}\label{hocolim5} $\eta:\hocolim_{\scI} \ B(F^G)\to B(\scI\int F^G)$ is a weak equivalence of $B\scO$-spaces natural in $G$.
\end{prop}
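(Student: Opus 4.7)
The plan is to separate two assertions: the weak equivalence part and the $B\scO$-equivariance together with naturality in $G$.

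That $\eta$ is a weak equivalence is a direct application of Thomason's theorem from \cite{T} to the functor $F^G:\scI\to\Cat$, so no additional work is required on this front. What remains is to check that $\eta$ is a map of $B\scO$-spaces and that the assignment $G\mapsto\eta$ is natural. Since both source and target of $\eta$ are geometric realizations of (bi)simplicial objects carrying an $N_*\scO$-action (the actions described in \ref{hocolim4} and immediately before the proposition), it suffices to verify that on the underlying bisimplicial sets, at each bidegree $\eta$ commutes with the respective $N_*\scO$-actions.

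To do this I would unwind both composites on a generic element $(\alpha_0\xrightarrow{\alpha_1}\cdots\xrightarrow{\alpha_p}\alpha_p)\in N_p\scO(n)$ acting on an $n$-tuple of pairs of sequences at bidegree $(p,q)$. Choosing planar representatives consistently, as in Remark \ref{diagram2}, computing ``action first, then $\eta$'' yields a sequence whose tree at level $i$ is $\mu(T_{1i},\ldots,T_{ni})$ and whose object at level $i$ is the Thomason pushforward, along $\mu(t_{1i},\ldots,t_{ni})\circ\cdots\circ\mu(t_{11},\ldots,t_{n1})$, of $\bar\mu(\alpha_0;X_{10},\ldots,X_{n0})$. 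Computing ``$\eta$ first, then action'' produces the same tree component $\mu(T_{1i},\ldots,T_{ni})$ and object component $\bar\mu(\alpha_i;\,[t_{1i}]\cdots[t_{11}](X_{10}),\ldots,[t_{ni}]\cdots[t_{11}](X_{n0}))$. The two answers agree because grafting is strictly associative, markings in the different factors are propagated independently, never interfering with the freshly glued root, so that
\[
\mu(t_{1i},\ldots,t_{ni})\circ\cdots\circ\mu(t_{11},\ldots,t_{n1})=\mu\bigl(t_{1i}\cdots t_{11},\ldots,t_{ni}\cdots t_{11}\bigr),
\]
and because $\bar\mu$ behaves functorially with respect to the morphisms of $F^G$, so applying it to the pushforwards equals pushing forward after applying it. Naturality in $G$ is routine: a morphism $G\to G'$ of $\hscO$-diagrams induces, directly from the coend formula in \ref{diagram3}, a natural transformation $F^G\to F^{G'}$ of diagrams $\scI\to\Cat$ compatible with the $\scO$-structures on the Grothendieck constructions, and $B$, $\hocolim$, and Thomason's map are each natural in their inputs.

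The main obstacle is the bookkeeping in the equivariance check: one has to be meticulous about planar representatives so that compositions of marked trees strictly agree (not merely up to isomorphism) on both sides. This is precisely the purpose of Remark \ref{diagram2}, and given that convention, the verification reduces to the strict associativity of grafting and the strict functoriality of $\bar\mu$ in its object arguments, both of which hold on the nose.
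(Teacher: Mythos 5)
Your proposal follows essentially the same route as the paper: the weak equivalence is quoted from Thomason, and the $B\scO$-equivariance is checked on the level of (bi)simplicial sets using the convention of Remark \ref{diagram2}, with the key point being exactly the one the paper isolates, namely that the $N_*\scO$-action composes root decorations from the left while the marked-tree morphisms $t_{ji}$ only shrink or chop edges above, hence compose from the right and commute with $\bar{\mu}$. Apart from a harmless index slip (the object at level $i$ should be the pushforward of $\bar{\mu}(\alpha_i;X_{1i},\dots,X_{ni})$ rather than of the level-$0$ data), this matches the paper's argument.
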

\begin{proof}
We prove this on the level of nerves. So let
$$\overline{(T,X)}=\left\{\left( \begin{array}{c}
                       [T_{j0}]\xrightarrow{[t_{j1}]}\cdots \xrightarrow{[t_{jp}]} [T_{jp}]   \\
 X_{j0}\xrightarrow{x_{j1}}\cdots \xrightarrow{x_{jp}} X_{jp}
                          \end{array}
\right)\right\}_{1\leq j\leq n}
$$
be an element in $\prod_{j=1}^n N_p(\scI)\times N_p(F^G([T_{j0}]))$ and 
$\bar{A}=(A_0\xrightarrow{\alpha_1}\cdots \xrightarrow{\alpha_p} A_p)\in N_p\scO(n)$, we have to show that
$\bar{A}\ast \eta^n(\overline{(T,X)})=\eta(\bar{A}\ast \overline{(T,X)})$, where $\bar{A}\ast -$ stands for the 
operation of $\bar{A}$.

To avoid a multitude of indices we do this for $n=2$ and $p=1$; the general case is analogous. Then
$$
\eta^2(\overline{(T,X)})=\{([T_{i0}],X_{j0})\xrightarrow{([t_{j1}],[t_{j1}](x_{j1}))} ([T_{j1}],[t_{j1}](X_{j1}))\}_{j=1,2}
$$
and
$$
\bar{A}\ast \eta^2(\overline{(T,X)})= ([\mu(T_{10},T_{20})],\bar{\mu}(A_0;X_{10},X_{20}))\to ([\mu(T_{11},T_{21})],
\bar{\mu}(A_1;[t_{11}](X_{11}),[t_{21}](X_{21}))
$$
Now 
$$
\bar{A}\ast \overline{(T,X)}= \left\{\left( \begin{array}{c}
                       [\mu(T_{10},T_{20})]\xrightarrow{[\mu(t_{11},t_{21})]} [\mu(T_{11},T_{21})]   \\
 \bar{\mu}(A_0;X_{10},X_{20})\xrightarrow{\bar{\mu}(\alpha_1;x_{11},x_{21})}  \bar{\mu}(A_1;X_{11},X_{21})
                          \end{array}
\right)\right\},
$$
which is mapped by $\eta$ to
$$
([\mu(T_{10},T_{20})], \bar{\mu}(A_0;X_{10},X_{20}))\rightarrow 
([\mu(T_{11},T_{21})], [\mu(t_{11},t_{21})] (\bar{\mu}(A_1;X_{11},X_{21}))).
$$
So we have to show that
$$
\begin{array}{rcl}
[\mu(t_{11},t_{21})](\bar{\mu}(A_1;X_{11},X_{21})) & = & \bar{\mu}(A_1;[t_{11}](X_{11})),[t_{21}](X_{21})) \\
{[\mu(t_{11},t_{21})]} (\bar{\mu}(\alpha_1 ;x_{11},x_{21})) & = & \bar{\mu} (\alpha_1;[t_{11}](x_{11}),
 [t_{21}](x_{21}))\\
\end{array} 
$$
These equations hold because the operation of $\bar{A}$ is defined by composition from the left with the sum of the appropriate root labels, while 
the $t_{ij}$ shrink edges of trees or are evaluations which could at most result in compositions with root labels from the right. 
\end{proof}

\begin{prop}\label{hocolim6} If $\scO$ is $\Sigma$-free there is a natural homeomorphism $B(F^G)([T])\cong F^{BG}([T])$
and hence a homeomorphism
 $$\hocolim_{\scI} \ B(F^G) \cong  \hocolim_{\scI}\ F^{BG}$$
of $B\scO$-spaces natural in $G$. In particular, Thomason's map induces a weak equivalence of $B\scO$-spaces
$\hocolim_{\scI}\ F^{BG}\to B(\scI\int F^G)$.
\end{prop}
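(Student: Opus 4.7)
The plan is to build the natural homeomorphism $B(F^G)([T])\cong F^{BG}([T])$ by combining three ingredients: the explicit coend formula from \ref{free_3}, the fact that $B$ preserves finite products, and Lemma \ref{operads6} which lets $B$ commute with quotients by free discrete group actions. Once the componentwise homeomorphism is in hand and shown to be natural in $[T]\in\scI$, the second assertion follows by applying $\hocolim_\scI$, since the Thomason weak equivalence $\eta$ together with Proposition \ref{hocolim5} identifies the homeomorphism as a map of $B\scO$-spaces.

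First I would reduce to the coend formula. Recalling from \ref{diagram3} that $F^G([T])=\underline{\scO}\otimes_{[T]}\lambda_G$, Proposition \ref{free_3} gives a natural isomorphism
\[
F^G([T])\cong \underline{\scO}(T)\times_{\Aut(T)}\lambda_G(T),
\]
where $\lambda_G(T)=\prod_{i=1}^n G(\In(T_i))$ if $T=\Theta_n\circ(T_1\oplus\ldots\oplus T_n)$. The crucial step is then the categorical analog of Lemma \ref{rect_6b}: for $\scO$ a $\Sigma$-free operad in $\Cat$, the group $\Aut(T)$ acts freely on the category $\underline{\scO}(T)$. This goes by induction on the number of nodes of $T$. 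For $T=\Theta_n$ we have $\Aut(T)=\Sigma_n$ acting on $\scO(n)$, and freeness is the defining hypothesis. For a general $T$, the semidirect product decomposition of $\Aut(T)$ recalled in the proof of Lemma \ref{rect_6b} shows that the action on $\scO(n)\times\underline{\scO}(T_1)\times\ldots\times\underline{\scO}(T_n)$ is free provided each factor carries a free action of the corresponding piece, which is the inductive hypothesis combined with the $\Sigma$-freeness of $\scO$ at the root.

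With freeness established, Lemma \ref{operads6} yields $B\bigl(\underline{\scO}(T)\times_{\Aut(T)}\lambda_G(T)\bigr)\cong B\underline{\scO}(T)\times_{\Aut(T)}B\lambda_G(T)$, since $B$ preserves finite products and since $\Aut(T)$ acts freely on the product category $\underline{\scO}(T)\times\lambda_G(T)$ through its action on the first factor. Using Lemma \ref{operads6a}(2), which identifies $B\underline{\scO}(T)$ with $\underline{B\scO}(T)$, and the obvious identity $B\lambda_G(T)=\lambda_{BG}(T)$, the right hand side is precisely $F^{BG}([T])$ by another application of \ref{free_3}. Naturality in $[T]$ requires checking that for each generating morphism of $\scI$ in \ref{diagram3} (shrinking or chopping), the map $F^G$ assigns is sent by $B$ to the corresponding map in $F^{BG}$. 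This is immediate from the explicit formulas: operad composition in $\scO$ is sent by $B$ to operad composition in $B\scO$, the projection induced by $\tau^*$ and the map $G(\widehat c)$ are sent to their analogs since $B$ is functorial and preserves products. Passing to $\hocolim_\scI$ then produces the asserted homeomorphism, and the compatibility with the $B\scO$-action (defined in Propositions \ref{hocolim3} and \ref{hocolim4}) follows because that action is encoded by the grafting operation $\bar\mu$, which involves only products and operad composition, both commuting with $B$. Composing with Thomason's weak equivalence $\eta$ from Proposition \ref{hocolim5} yields the final claim.

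The main obstacle is the freeness of the $\Aut(T)$-action on $\underline{\scO}(T)$ in $\Cat$; everything else is bookkeeping with coends and the functoriality of $B$. This freeness is essentially the content of Lemma \ref{rect_6b} adapted from $\Top$ to $\Cat$, and it is indispensable because without it Lemma \ref{operads6} fails and $B$ does not commute with the coend, as illustrated by the example following Lemma \ref{operads6}.
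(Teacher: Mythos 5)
Your proposal is correct and follows essentially the same route as the paper's proof: both reduce to the coend formula $F^G([T])\cong \underline{\scO}(T)\times_{\Aut(T)}\lambda_G(T)$ via \ref{free_3}, use that $B$ preserves products, and invoke the freeness of the $\Aut(T)$-action together with Lemma \ref{operads6} to commute $B$ past the quotient. The only difference is that you spell out the inductive verification of freeness and the naturality checks, which the paper leaves implicit.
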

\begin{proof}
$F^G([T])\cong \underline{\scO}(T)\times_{\Aut(T)}\lambda_G(T)$ and $F^{BG}([T])\cong \underline{B\scO}(T)\times_{\Aut(T)}\lambda_{BG(T)}$
by \ref{free_3}. Since $B$ is product preserving there is a natural homeomorphism $\lambda_{BG}\to B\lambda_G$. Since $\Aut(T)$
acts freely on $\underline{\scO}(T)$ it acts freely on $\underline{\scO}(T)\times \lambda_G(T)$. 
Hence there is a natural homeomorphism $B(\underline{\scO}(T)\times_{\Aut(T)}\lambda_G(T))\to B\underline{\scO}(T)\times_{\Aut(T)}B\lambda_G(T)$
by Lemma \ref{operads6}. Here we also use that $B(\underline{\scO}(T)\times \lambda_G(T))\cong B(\underline{\scO}(T))\times B(\lambda_G(T))$.
\end{proof}

A map $\tau:G_1\rightarrow G_2$ of $\hscO$-categories induces a map
$F^\tau:F^{G_1}\to F^{G_2}$ of $\scI$-diagrams in $\Cat$ and hence a map 
$$\scI\int F^\tau:\scI\int F^{G_1}\to \scI\int F^{G_2}$$
of $\scO$-algebras.

\begin{prop}\label{hocolim8}
 If $\scO$ is a $\Sigma$-free operad in $\scS$ and $\tau:G_1\rightarrow G_2$ is a map of $\hscO$-diagrams in $\scS$
which is objectwise a weak equivalence, then 
\begin{enumerate}
 \item if $\scS=\Cat$ the functor 
$\scI\int F^\tau: \scI\int F^{G_1}\to  \scI\int F^{G_2}$ is a weak equivalence of $\scO$-algebras.
\item if $\scS=\Top$, the map $\hocolim_{\scI}\ F^{G_1}\to \hocolim_{\scI}\ F^{G_2}$ is a weak equivalence of $\scO$-spaces.
\end{enumerate}
\end{prop}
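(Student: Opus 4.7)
The plan is to reduce part (1) to part (2) via the zig-zag of natural weak equivalences from Propositions \ref{hocolim5} and \ref{hocolim6}, and to prove (2) by combining the homotopy invariance of the 2-sided bar construction with the $\Sigma$-freeness of $\scO$.

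For (2), the key step is to verify that $F^\tau\colon F^{G_1}\to F^{G_2}$ is an objectwise weak equivalence of $\scI$-diagrams in $\Top$. Applying Proposition \ref{free_3} to the isomorphism class $[T]\subset\tmT$ gives a homeomorphism
$$F^G([T]) \cong \underline{\scO}(T)\times_{\Aut(T)}\lambda_G(T),$$
under which $F^\tau$ at $[T]$ is identified with $\id\times_{\Aut(T)}\lambda_\tau(T)$. Since $\tau$ is objectwise a weak equivalence and $\lambda_G(T)$ is a finite product of values of $G$, the $\Aut(T)$-equivariant map $\lambda_\tau(T)$ is a homotopy equivalence of underlying spaces. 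Because $\scO$ is $\Sigma$-free, Lemma \ref{rect_6b} shows that $\underline{\scO}(T)$ is a numerable principal $\Aut(T)$-space, so \ref{rect_6a}(2) implies $\id\times_{\Aut(T)}\lambda_\tau(T)$ is a homotopy equivalence. The objectwise equivalence $F^\tau$ then induces a levelwise weak equivalence of the simplicial spaces $B_\bullet(\ast,\scI,F^{G_i})$ defining the homotopy colimits, and since this bar construction is proper (its degeneracies come from the nerve of $\scI$, which is set-valued), the induced map on realizations $\hocolim_{\scI}F^{G_1}\to\hocolim_{\scI}F^{G_2}$ is a weak equivalence of $\scO$-spaces.

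For (1), Lemma \ref{operads6a} tells us that $B\scO$ is an operad in $\Top$, which is $\Sigma$-free when $\scO$ is, and that $B\tau\colon BG_1\to BG_2$ is an objectwise weak equivalence of $\widehat{B\scO}$-diagrams in $\Top$. Naturality of the map $\eta$ from Proposition \ref{hocolim5} and of the homeomorphism of Proposition \ref{hocolim6} assembles into a commutative diagram of $B\scO$-spaces whose top and bottom rows are the zig-zags
$$B(\scI\int F^{G_i}) \xleftarrow{\eta} \hocolim_{\scI} B(F^{G_i}) \xrightarrow{\cong} \hocolim_{\scI} F^{BG_i}.$$
The horizontal maps are weak equivalences by Propositions \ref{hocolim5} and \ref{hocolim6}, and the right-hand vertical map is a weak equivalence by applying (2) to the $\Sigma$-free operad $B\scO$ and the equivalence $B\tau$. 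By two-out-of-three, the left-hand vertical map $B(\scI\int F^\tau)$ is a weak equivalence, so $\scI\int F^\tau$ is a weak equivalence of $\scO$-algebras, proving (1).

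The main obstacle is the passage from objectwise weak equivalences of the diagrams $F^{G_i}$ to a weak equivalence of their homotopy colimits. The essential input is the $\Sigma$-freeness of $\scO$, which via Lemma \ref{rect_6b} provides the numerable principal $\Aut(T)$-structure on $\underline{\scO}(T)$ needed to push weak equivalences through the orbit construction $-\times_{\Aut(T)}-$ at each tree class; without this, $\lambda_\tau(T)$ being a non-equivariant homotopy equivalence would not suffice.
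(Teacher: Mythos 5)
Your proposal is correct and follows essentially the same route as the paper: reduce (1) to (2) via the Thomason map and the homeomorphism of Propositions \ref{hocolim5} and \ref{hocolim6}, and prove (2) by identifying $F^{G}([T])$ with $\underline{\scO}(T)\times_{\Aut(T)}\lambda_G(T)$ and invoking Lemma \ref{rect_6b} together with \ref{rect_6a}(2) before appealing to homotopy invariance of the bar construction. The only cosmetic difference is that you spell out the naturality and two-out-of-three argument for (1) that the paper compresses into one sentence (and the $\Sigma$-freeness of $B\scO$ you use there is supplied by Lemma \ref{operads6} rather than Lemma \ref{operads6a}).
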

\begin{proof}
By Proposition \ref{hocolim6} it suffices to prove (2) because weak equivalences in $\Cat$ are detected by the classifying space functor $B$.

We have a commutative diagram 
$$\xymatrix{
F^{G_1}([T])\ar[rr]^{F^{\tau}([T])}\ar[d]^\cong && F^{G_2}([T])\ar[d]^\cong\\
\underline{\scO}(T)\times_{\Aut(T)}\lambda_{G_1}(T)\ar[rr]^{\id\times_{\Aut} \lambda_\tau(T)} && 
\underline{\scO}(T)\times_{\Aut(T)}\lambda_{G_2}(T)
}
$$
Since $\underline{\scO}(T)$ is a numerable principal $\Aut(T)$ space by Lemma \ref{rect_6b} and $\lambda_\tau(T)$
is a homotopy equivalence, the map $\id\times_{\Aut} \lambda_\tau(T)$ is a homotopy equivalence by \ref{rect_6a}(2).
Hence $F^\tau :F^{G_1}\to F^{G_2}$ is objectwise a weak equivalence, inducing a homotopy
equivalence $\hocolim \ F^{G_1}\to \hocolim \ F^{G_2}$.
\end{proof}

\section{Change of operads}\label{change}

Let $X$ be an $\scO$-algebra in $\Cat$ and $\widehat{X}: \hscO\to \Cat$ its associated $\hscO$-diagram.
Then we have a map
$$\varepsilon: \scI\int F^{\widehat{X}}\to X
$$
induced by
 $$\xymatrix{
\scI \ar[rr]^p\ar[rdd]_{F^{\widehat{X}}} && \ast\ar[ldd]^X\\
& \stackrel{e}{\Rightarrow} & \\
&\Cat &
}
$$

where $e([T])$ is the composite
$$e([T]): F^{\widehat{X}}([T])\rightarrow \scO(\In(T))\times_{\Sigma_{\In(T)}}X^{\In(T)}\rightarrow X.$$
If $T$ is of the form \ref{rect2a} then the first map
shrinks all edges of all decorated trees $T_i$. The second map is the $\scO$-action on $X$. 

By construction, $\varepsilon$ is a homomorphism of $\scO$-algebras.

\begin{prop}\label{change1}
 The homomorphism $\varepsilon: \scI\int F^{\widehat{X}}\to X$ is a weak equivalence of $\scO$-algebras.
\end{prop}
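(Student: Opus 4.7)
The plan is to reduce the assertion to the topological rectification result, Proposition \ref{rect7}. Since weak equivalences of $\scO$-algebras in $\Cat$ are by definition detected by the classifying space functor $B$, it suffices to show that $B\varepsilon: B(\scI\int F^{\widehat X}) \to BX$ is a weak equivalence of $B\scO$-spaces.

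First I would topologize the source using the machinery of Section \ref{hocolim}. Proposition \ref{hocolim5} (Thomason's natural weak equivalence) provides $\hocolim_\scI B(F^{\widehat X}) \simeq B(\scI\int F^{\widehat X})$, and under the $\Sigma$-freeness hypothesis Proposition \ref{hocolim6} together with Lemma \ref{operads6a}(3) yields natural homeomorphisms $B(F^{\widehat X}) \cong F^{B\widehat X} \cong F^{\widehat{BX}}$ of $\scI$-diagrams in $\Top$. Combining these, $B(\scI\int F^{\widehat X})$ is naturally weakly equivalent to $\hocolim_\scI F^{\widehat{BX}}$, and tracing through the definitions the map $B\varepsilon$ corresponds under this identification to the evaluation map from the hocolim to $BX$ induced by the collapse rule $e$.

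Second, and this is the main technical step, I would construct a natural weak equivalence $\Phi:\hocolim_\scI F^{\widehat{BX}}\to M(\widehat{BX})$ of $B\scO$-spaces commuting with the evaluation maps to $BX$. The two constructions aggregate the same decorated trees but parameterize iterated shrinkings and choppings of internal edges differently: the hocolim uses simplicial chains of such operations in $\scI$, while $M$ uses lengths in $I=[0,1]$ on internal edges with the shrinking (resp.\ chopping) relations imposed at $0$ (resp.\ $1$). The map $\Phi$ would be defined on a $p$-simplex $([T_0]\to\cdots\to[T_p], x)$ with barycentric coordinates $(s_0,\ldots,s_p)$ by letting these coordinates record the moment in $I$ at which each internal edge of a planar representative of $T_0$ is shrunk or chopped. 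I would verify that $\Phi$ is a weak equivalence by induction on the total number of internal edges, mirroring the filtration argument in the proof of Theorem \ref{rect6} and invoking Lemma \ref{rect_6b} to ensure that the relevant $\Aut(T)$-actions are free on numerable principal bundles so that the gluing lemma applies in the inductive step.

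Finally, Proposition \ref{rect7} supplies a natural weak equivalence $\bar\varepsilon: M(\widehat{BX}) \to BX$ of $B\scO$-algebras, and by naturality the composite $\hocolim_\scI F^{\widehat{BX}} \xrightarrow{\Phi} M(\widehat{BX}) \xrightarrow{\bar\varepsilon} BX$ agrees with the hocolim evaluation map from step one. This forces $B\varepsilon$ to be a weak equivalence, completing the proof. The principal obstacle is the construction and verification of $\Phi$, which requires a careful comparison of the simplicial parameterization in the hocolim with the cubical parameterization in $M$, all the while respecting the equivariance relations of \ref{rect3} and the $B\scO$-structures on both sides.
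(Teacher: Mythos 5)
Your overall strategy (topologize via $B$, compare with a known rectification, transport the equivalence back) could in principle be made to work, but as written it has a genuine gap at what you yourself identify as the main technical step: the map $\Phi:\hocolim_\scI F^{\widehat{BX}}\to M(\widehat{BX})$ is never actually constructed. The recipe ``let the barycentric coordinates record the moment at which each internal edge is shrunk or chopped'' does not specify a map: it assigns no length to the internal edges of $T_0$ that are never marked in the chain $[T_0]\to\cdots\to[T_p]$, it does not say how a single coordinate in $I$ simultaneously encodes approach to the shrinking relation (length $0$) and the chopping relation (length $1$), and it does not address compatibility with the simplicial identities --- in particular with $d^0$, which actually \emph{performs} the operations of $f_1$ and hence forces the lengths of the edges marked in $t_1$ to hit $0$ or $1$ exactly when $s_0=0$ --- nor with the equivariance relations of \ref{rect3}. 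This well-definedness is precisely the hard content of such a comparison; the paper deliberately avoids a direct map and instead produces only a zig-zag (Corollary \ref{recttop3}), which is evidence that the one-line formula is not available. A secondary problem is that your route passes through Proposition \ref{hocolim6}, which requires $\scO$ to be $\Sigma$-free; Proposition \ref{change1} is stated (and proved in the paper) for an arbitrary operad in $\Cat$.

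The paper's proof is far more economical and entirely categorical: one writes down a section $s:X\to\scI\int F^{\widehat X}$ of $\varepsilon$ (sending $x$ to $(\Theta_1,x)$ with $\Theta_1$ decorated by $\id$), an endofunctor $J$ of $\scI\int F^{\widehat X}$ that grafts a new unary root node decorated by $\id$ beneath every tree, and two natural transformations $J\Rightarrow\Id$ and $J\Rightarrow s\circ\varepsilon$ given by shrinking, respectively chopping, the newly created internal edge. Applying $B$ turns these natural transformations into homotopies $\Id\simeq BJ\simeq Bs\circ B\varepsilon$, so $B\varepsilon$ is a homotopy equivalence with no hypothesis on $\scO$ and no comparison of parameterizations. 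If you want to salvage your plan, note that the topological statement you are after (that the evaluation $\hocolim_\scI F^{\widehat{BX}}\to BX$ is an equivalence) is Proposition \ref{change4}(1), and in the paper it is \emph{deduced from} the functors and natural transformations of the proof of \ref{change1} via \ref{tecttop1}, not the other way around; so the logical direction of your reduction is reversed relative to the paper's.
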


\begin{proof}
Note that $F^{\widehat{X}}([T])= W(\widehat{X},[T])$ modulo the equivariance relation, because $\widehat{X}(n)=X^n$. 
There is a section $s: X\to \scI\int F^{\widehat{X}}$ of  $\varepsilon$,
 which is not a map of algebras. It is induced by
$$\xymatrix{
\ast \ar[rr]^i\ar[rdd]_X && \scI \ar[ldd]^{F^{\widehat{X}}}
\\
& \stackrel{\iota}{\Rightarrow} & \\
& \Cat &
}
$$
where $i$ takes $\ast$ to the tree $\Theta_1$ which in turn
 is mapped to $\scO(1)\times X$ by  $F^{\widehat{X}}$, and $\iota(\ast):X\to F^{\widehat{X}}\circ i$ is the inclusion
$X=\{\id \}\times X\subset \scO(1)\times X$. 

Let $j:\scI\to\scI$ be the functor which maps $[T]$ to the isomorphism class represented by
$$
\begin{tikzpicture}
[level distance=10mm,
every node/.style={fill=white,circle,inner sep=0.5pt},
level 1/.style={sibling distance=10mm,nodes={fill=white}},]
\node[white] {}[grow=north]
child {node[draw] (above node) {}
child {node {$T$}
}
};
\end{tikzpicture}
$$
Now let $J: \scI\int F^{\widehat{X}}\to \scI\int F^{\widehat{X}}$ be the functor sending an object
$([T],X)$ with $[T]\in \scI$ and $X\in F^{\widehat{X}}([T])$ to $(j([T]),j(X))$, where $j(X)$ 
has the decoration of $X$ on the $T$-part and $\id$ as decoration of the root of
$j([T])$. This definition extends canonically to morphisms with the root of $j([T])$ always 
decorated by the identity.

Shrinking and chopping the incoming edge of the root of $j([T])$ defines natural transformations
$J\Rightarrow \Id$ respectively $J\Rightarrow s\circ \varepsilon$. The classifying space functor
turns these transformations into homotopies 
$\Id\simeq BJ\simeq Bs\circ B\varepsilon$.
\end{proof}

\begin{coro}\label{change2}
 Let $\varphi: \scO\to \scP$ be a weak equivalence of $\Sigma$-free operads and let $X$ be an $\mathcal{O}$-algebra.
Then there are natural weak equivalences of $\mathcal{O}$-algebras
$$
X\leftarrow \scI\int F^{\widehat{X}} \rightarrow \scI\int F^{\widehat{X}}_\varphi .
$$
In particular, $X$ is weakly equivalent to an $\scP$-algebra.
\end{coro}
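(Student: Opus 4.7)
The left-hand map is given by Proposition \ref{change1}. The plan is to construct the right-hand map as $\scI\int F^\tau$ for a suitable map of $\scI$-diagrams $\tau:F^{\widehat{X}}\to F^{\widehat{X}}_\varphi$, and then argue that this is a weak equivalence of $\scO$-algebras by adapting the proof of Proposition \ref{hocolim8}. The map $\tau$ is defined at each isomorphism class $[T]$ by applying $\varphi$ to the bottom-node decoration of a representing decorated tree while leaving all other node decorations and the $\widehat{X}$-data untouched. The compatibility with the shrinking/chopping/equivariance relations is immediate from the fact that $\varphi$ is a map of operads, so $\tau$ is well-defined and natural in $[T]$. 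Similarly, the $\scO$-action on $\scI\int F^{\widehat{X}}_\varphi$ (obtained by restricting the canonical $\scP$-algebra structure along $\varphi$) makes $\scI\int F^\tau$ a homomorphism of $\scO$-algebras.

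Next I would check that $\tau$ is objectwise a weak equivalence. Writing $T=\Theta_n\circ(T_1\oplus\cdots\oplus T_n)$, the map $\tau_{[T]}$ is the map of coends
\[ \bigl(\scO(n)\times\underline{\scO}(T_1)\times\cdots\times\underline{\scO}(T_n)\bigr)\times_{\Aut(T)}\lambda_{\widehat{X}}(T)\longrightarrow \bigl(\scP(n)\times\underline{\scO}(T_1)\times\cdots\times\underline{\scO}(T_n)\bigr)\times_{\Aut(T)}\lambda_{\widehat{X}}(T) \]
given by $\varphi\times\id$ on the first factor. Since $\varphi(n):\scO(n)\to\scP(n)$ is a $\Sigma_n$-equivariant homotopy equivalence (after passing to $B$ in the $\Cat$ case, but weak equivalences in $\Cat$ are detected by $B$), the map $\varphi\times\id$ on the total space is $\Aut(T)$-equivariant and a plain homotopy equivalence. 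Using that both $\scO$ and $\scP$ are $\Sigma$-free, Lemma \ref{rect_6b} shows that both total spaces are numerable principal $\Aut(T)$-spaces, so by \ref{rect_6a}(1) the map is an equivariant homotopy equivalence, and therefore descends to a homotopy equivalence on the quotients $\times_{\Aut(T)}\lambda_{\widehat{X}}(T)$.

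With objectwise weak equivalence in hand, I would conclude that $\scI\int F^\tau$ is a weak equivalence of $\scO$-algebras by the same argument as in Proposition \ref{hocolim8}: passing through $B$, identifying the resulting spaces with $\hocolim_\scI F^{B\widehat{X}}$ (and its $\varphi$-analog) via Proposition \ref{hocolim6}, and then invoking Thomason's natural weak equivalence of Proposition \ref{hocolim5}. The objectwise weak equivalence passes to a weak equivalence on hocolims via the standard homotopy invariance of $\hocolim$ for levelwise equivalences, which together with Thomason's comparison yields the required weak equivalence on $B(\scI\int F^{(-)})$. The composite of $\varepsilon^{-1}$ (in the homotopy category) with $\scI\int F^\tau$ exhibits $X$ as weakly equivalent to $\scI\int F^{\widehat{X}}_\varphi$, which carries a canonical $\scP$-algebra structure.

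The main technical obstacle is the verification that $\tau$ is objectwise a weak equivalence; the argument outlined above depends crucially on the $\Sigma$-freeness hypothesis on \emph{both} operads, since this is what makes both Borel-type constructions numerable principal $\Aut(T)$-spaces and thus allows the comparison results of \ref{rect_6a} to apply uniformly to $\scO$ and $\scP$. Everything else is formal once this input is in place.
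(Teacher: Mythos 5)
Your proposal is correct and follows essentially the same route as the paper: the paper's proof simply cites Proposition \ref{change1} for the left map and observes that $F^{\widehat{X}}\to F^{\widehat{X}}_\varphi$ is objectwise a weak equivalence, leaving implicit exactly the numerable-principal-bundle and Proposition \ref{hocolim8}-style arguments you spell out. The additional detail you supply (in particular that $\Sigma$-freeness of \emph{both} operads makes both Borel-type constructions numerable principal $\Aut(T)$-spaces, so that \ref{rect_6a} applies) is precisely the intended justification.
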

\begin{proof}
 The left map is a weak equivalence of $\mathcal{O}$-algebras by Proposition \ref{change1}, and the right map is a weak equivalence 
of $\mathcal{O}$-algebras since $F^{\widehat{X}} \rightarrow F^{\widehat{X}}_\varphi$ is objectwise a weak equivalence.
\end{proof}

The analogous results hold in $\Top$: by \cite[Proposition 3.1]{HV} the functors and natural transformations constructed
in the proof of Proposition \ref{change1} imply

\begin{prop}\label{change4}
 (1) Let $\scO$ be a $\Sigma$-free topological operad, $X$ an $\scO$-space and $\widehat{X}: \hscO\to \Top$ its associated 
$\widehat{\scO}$-diagram, then there is a natural homomorphism of $\scO$-spaces 
$\varepsilon:\hocolim \ F^{\widehat{X}}\to X$, which is a weak equivalence.\\
(2) If $\varphi: \mathcal{O}\to \scP$ is a weak equivalence of $\Sigma$-free topological operads, then there are
natural weak equivalences of $\mathcal{O}$-spaces
$$
X\leftarrow \hocolim \ F^{\widehat{X}} \rightarrow \hocolim \ F^{\widehat{X}}_\varphi .
$$
In particular, $X$ is weakly equivalent to an $\scP$-space.
\end{prop}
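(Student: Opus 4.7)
The plan is to mimic the proof of Proposition \ref{change1} while replacing the passage ``natural transformation $\Rightarrow$ homotopy of classifying spaces'' by the topological fact \cite[Proposition 3.1]{HV}, which states that a natural transformation of $\scC$-diagrams in $\Top$ induces a homotopy between the corresponding maps of two-sided bar constructions (so that natural transformations of diagrams yield homotopies at the level of $\hocolim$).

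For part (1), I would construct $\varepsilon:\hocolim_\scI F^{\widehat{X}}\to X$ exactly as in the categorical case: the natural transformation $e:F^{\widehat{X}}\Rightarrow \text{const}_X$ with $e([T])$ the composite that first shrinks all internal edges of the decorated trees and then applies the $\scO$-action $\scO(\In(T))\times_{\Sigma_{\In(T)}}X^{\In(T)}\to X$ induces the map $\varepsilon$ on $\hocolim$. That $\varepsilon$ is a homomorphism of $\scO$-spaces follows from the description of the $\scO$-action on $\hocolim_\scI F^{\widehat{X}}$ given in Proposition \ref{hocolim3}, since both the $\scO$-operation on the bar construction and the evaluation $e$ are defined in terms of grafting roots and composing root decorations, and these two operations commute as already observed in the proof of Proposition \ref{change1}.

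For the weak equivalence, I would construct the section $s:X\to \hocolim_\scI F^{\widehat{X}}$ induced by the inclusion $\iota:X=\{\id\}\times X\subset \scO(1)\times X=F^{\widehat{X}}([\Theta_1])$ landing in degree zero of the bar construction, and the self-map $J:\hocolim_\scI F^{\widehat{X}}\to \hocolim_\scI F^{\widehat{X}}$ induced by the endofunctor $j:\scI\to\scI$ that stacks $\Theta_1$ (decorated by $\id$) on top of a tree $T$ and the corresponding natural transformation $F^{\widehat{X}}\Rightarrow F^{\widehat{X}}\circ j$. The two natural transformations of diagrams corresponding to shrinking the new edge ($J\Rightarrow \Id$) and chopping it ($J\Rightarrow s\circ\varepsilon$) are then fed into \cite[Proposition 3.1]{HV} to produce homotopies $\Id\simeq J\simeq s\circ \varepsilon$ in $\Top$, showing that $\varepsilon$ is a homotopy equivalence of underlying spaces and hence a weak equivalence of $\scO$-spaces.

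For part (2), the map of operads $\varphi:\scO\to\scP$ induces a natural transformation $F^{\widehat{X}}\to F^{\widehat{X}}_\varphi$ of $\scI$-diagrams in $\Top$ that is objectwise a homotopy equivalence because $\varphi$ is a weak equivalence and the operad decorations entering the relevant coend are grouped on the root node. Proposition \ref{hocolim8}(2) then yields a weak equivalence of $\scO$-spaces $\hocolim_\scI F^{\widehat{X}}\to \hocolim_\scI F^{\widehat{X}}_\varphi$, and combining this with the weak equivalence $\varepsilon$ from (1) gives the zig-zag. The main obstacle I anticipate is verifying that the natural transformations $J\Rightarrow \Id$ and $J\Rightarrow s\circ \varepsilon$ are genuine transformations of $\scI$-diagrams (as opposed to only diagram-wise natural transformations between their underlying objects), so that \cite[Proposition 3.1]{HV} actually applies; once this bookkeeping is settled the remainder is formal.
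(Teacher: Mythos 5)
Your proposal is correct and follows essentially the same route as the paper: the paper's proof of this proposition is literally the one-line remark that the functors and natural transformations from the proof of Proposition \ref{change1} combine with \cite[Proposition 3.1]{HV} to give the result, which is exactly your strategy (and the bookkeeping you flag about $J\Rightarrow\Id$ and $J\Rightarrow s\circ\varepsilon$ being transformations compatible with the diagrams is settled in the paper exactly as you anticipate, via the condition $(Y\star\tau)\circ\alpha=\beta$ of \ref{tecttop1}, spelled out in the proof of Theorem \ref{recttop2}).
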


\section{Comparing $\widehat{\scO}$-categories $G$ with $\scI\int F^G$}\label{Ohat}

Let $G:\hscO\to\Cat$ be an $\scO$-category. We define a functor
$$\lambda_n=\lambda_{nG}:\tmT^n\to \Cat$$
by sending $(T_1,\ldots ,T_n)$ to $G(\In(T_1)+\ldots +\In(T_n))$ and a morphism $(\phi_1,\ldots ,\phi_n)$ to
$G(\phi_1^\Sigma\oplus\ldots\oplus \phi_n^\Sigma)$. Let
$$
W^G_n:\scI^n\to\Cat
$$
be the diagram given on objects by the coend
$$
W^G_n([T_1],\ldots,[T_n])=\underline{\scO}\times\ldots\times\underline{\scO}\otimes_{[T_1]\times\ldots\times [T_n]}\lambda_n
$$
obtained by restricting the functors $\lambda_n$ and
$$
\underline{\scO}\times\ldots\times\underline{\scO}:\tmT^n\to \Cat,\quad (T_1,\ldots,T_n)\mapsto
\underline{\scO}(T_1)\times\ldots\times\underline{\scO}(T_n)
$$
to $[T_1]\times\ldots\times[T_n]\subset\tmT^n$. On generating morphisms $W^G_n$ is defined as follows:
\begin{itemize}
	\item Shrinking an internal edge: $W^G_n(-)$ is defined in the same way as $F^G(-)$ for shrinking a nonbottom edge
	\item Chopping of a branch: $W^G_n(-)$ is defined in the same way as $F^G(-)$ with the difference that $G(\widehat{c})$ is defined with
	respect to the union of all inputs of $T_1,\ldots,T_n$.
\end{itemize}
As before, an object or morphism of $W^G_n([T_1],\ldots,[T_n])$ is represented by a tuple $(T_1,\ldots,T_n,C)$ consisting of trees $T_i\in [T_i]$
decorated by objects respectively morphisms in $\scO(\In(T_i))$ and an object respectively morphism 
$C\in G(\In(T_1)+\ldots +\In(T_n))$. The appropriate equivariance relations hold.
\begin{rema}\label{Ohat0}
Note that, unlike in the $F^G$ construction, the bottom edges of trees play no special role
in the $W^G_n$ construction. Also note that $W^G_1([T])$, modulo the equivariance relation, coincides with the construction
$W(G,T)$ used as a stepping stone for the $F^G$ construction. 
\end{rema}

\begin{lem}\label{Ohat1} The correspondence
$$
n\mapsto \scI^n\int W^G_n
$$
extends to an $\hscO$-category
$$
M=\scI^\ast\int W^G_\ast:\hscO\to\Cat.
$$
\end{lem}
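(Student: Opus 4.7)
The plan is to define $M$ on morphisms by exploiting the explicit description $\hscO(l,n)=\coprod_{0\leq k\leq l}\scO(k,n)\times_{\Sigma_k}\Inj(k,l)$ given in \ref{operads4}, treating the injection part and the operad part separately and then combining them. Since $\hscO$ is generated by $\scO$ together with the projections in $\Pi$, it suffices to specify $M$ on these two families of morphisms and verify that the resulting assignment respects the $\hscO$-composition formula.

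For a projection $\sigma^\ast\in\hscO(l,k)$ coming from an injection $\sigma:k\to l$, I define $M(\sigma^\ast):\scI^l\int W^G_l\to \scI^k\int W^G_k$ to drop the trees not indexed by $\sigma$ and to modify the $G$-factor via the corresponding block injection on inputs. Explicitly, on objects:
\[
(([T_1],\ldots,[T_l]),\, C)\;\longmapsto\;(([T_{\sigma(1)}],\ldots,[T_{\sigma(k)}]),\; G(\sigma_{\In}^\ast)(C)),
\]
where $\sigma_{\In}$ is the block injection $\In(T_{\sigma(1)})+\cdots+\In(T_{\sigma(k)})\hookrightarrow \In(T_1)+\cdots+\In(T_l)$, with the obvious action on morphisms. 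For $f=f_1\oplus\cdots\oplus f_n\in\scO(k,n)$ with $f_i\in\scO(r_i,1)$ and $r_1+\cdots+r_n=k$, I define $M(f):\scI^k\int W^G_k\to\scI^n\int W^G_n$ by grouping the $k$ trees into $n$ blocks of sizes $r_i$ and grafting the $i$-th block onto a new root node decorated by $f_i$. Because grafting does not change the total number of input edges, the element $C\in G(\In(T_1)+\cdots+\In(T_k))$ is passed through unchanged, viewed now as an element of $G(\In(T'_1)+\cdots+\In(T'_n))$ where $T'_i$ is the $i$-th grafted tree. A general morphism $(f,\sigma)\in\hscO(l,n)$ is defined to be the composite $M(f)\circ M(\sigma^\ast)$.

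Three things have to be checked: (i) the assignment descends from representatives $(f,\sigma)$ to their $\Sigma_k$-orbits in $\scO(k,n)\times_{\Sigma_k}\Inj(k,l)$; (ii) $M$ respects the composition formula in $\hscO$ displayed in \ref{operads4}; and (iii) $M$ is natural in $G$. Point (i) follows from the equivariance relations already built into $W^G_k$: permuting the blocks of grafted trees is absorbed both by the $\scO$-equivariance on node decorations and by the $G$-action on the corresponding block permutation of inputs, so the two sides agree in the coend. Naturality (iii) is immediate because the whole construction is built functorially from $\scO$, $\Inj$ and $G$.

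The main obstacle is the composition identity (ii). The delicate point is the appearance of the block permutation $\sigma(r_1,\ldots,r_l)$ in the formula
\[
(f,\sigma)\circ(g_1\oplus\cdots\oplus g_l,\tau)=(f\circ(g_{\sigma(1)}\oplus\cdots\oplus g_{\sigma(k)}),\; \tau\circ\sigma(r_1,\ldots,r_l)).
\]
Unwinding both sides, the first applies $(g_1\oplus\cdots\oplus g_l,\tau)$ (project via $\tau$, then graft in blocks of sizes $r_1,\ldots,r_l$) and then $(f,\sigma)$ (project via $\sigma$, then graft with $f$), whereas the second selects inputs through the composite $\tau\circ\sigma(r_1,\ldots,r_l)$ and then grafts in the regrouped order $g_{\sigma(1)}\oplus\cdots\oplus g_{\sigma(k)}$ followed by $f$. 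On the $\underline{\scO}$-factors the equality is the associativity and equivariance of grafting of decorated trees; on the $G$-factor it follows by applying the functor $G$ to the very same composition identity in $\hscO$, now interpreted as an equality of morphisms between the relevant input sets. Once these are reconciled, $M$ is a well-defined $\scS$-enriched functor $\hscO\to\Cat$, as required.
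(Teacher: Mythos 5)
Your overall strategy --- defining $M$ separately on the projections $\sigma^\ast$ and on the operad morphisms, using the decomposition $\hscO(l,n)=\coprod_k\scO(k,n)\times_{\Sigma_k}\Inj(k,l)$ from \ref{operads4}, and then checking the composition formula --- is exactly the paper's strategy, and your $M(\sigma^\ast)$ (drop the unselected trees, apply $G$ of the block injection to $C$) agrees with the paper's. The problem is your definition of $M(f)$ for $f=f_1\oplus\cdots\oplus f_n$. You graft the $i$-th block of trees \emph{onto a new root node decorated by $f_i$}, i.e.\ you stack a fresh corolla underneath and leave the old root nodes and their decorations in place. With that convention $M$ is not a functor: for composable operad morphisms $f$ and $g$, the object $M(f)\bigl(M(g)(T_1,\ldots,T_m,C)\bigr)$ acquires two extra layers of nodes while $M(f\circ g)(T_1,\ldots,T_m,C)$ acquires only one, so the underlying trees lie in different isomorphism classes in $\scI^n$ and the two objects of $\scI^n\int W^G_n$ agree only after applying the (non-identity) edge-shrinking morphisms of $\scI$. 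You would obtain at best a pseudofunctor, whereas the lemma asserts an actual $\scS$-enriched functor $\hscO\to\Cat$, and that strictness is what is used downstream in Lemmas \ref{Ohat2} and \ref{Ohat3}.

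The fix, which is what the paper does, is to \emph{merge} rather than stack: $M(f)$ sends $(T_1,\ldots,T_k,C)$ to $(T'_1,\ldots,T'_n,C)$ where $T'_i$ is obtained by grafting the roots of the $i$-th block of trees together into a \emph{single} node decorated by $f_i\circ(\beta_1\oplus\cdots\oplus\beta_{r_i})$, the $\beta_j$ being the old root decorations, which are thereby absorbed. Then applying two operad morphisms in succession produces the same underlying tree as applying their composite, and the decorations agree by associativity of the operad composition in $\scO$, so functoriality holds on the nose. Your remaining points --- descent to the $\Sigma_k$-orbits via the equivariance relations built into the coend, the interchange with the block injection $\sigma(r_1,\ldots,r_l)$, and naturality in $G$ --- are correct and are in fact left implicit in the paper's own proof, which consists only of the definitions.
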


\begin{proof}
If $\sigma\in\Inj(k,l)$ then $M(\sigma^\ast):M(l)\to M(k)$ is induced by the projection
$$
\scI^l\to\scI^k, \quad ([T_1],\ldots,[T_l])\mapsto([T_{\sigma(1)}],\ldots,[T_{\sigma(k)}])
$$
and the map
$G(\sigma(\In(T_1),\ldots,\sigma(\In(T_l))^\ast)$  (see \ref{operads4}).
If $\alpha=(\alpha_1,\ldots,\alpha_r)\in\scO(k_1,1)\times\ldots\times\scO(k_r,1)$ and $m=k_1+\ldots,k_r$, 
then $M(\alpha):M(m)\longrightarrow M(r)$ maps a representing tuple $(T_1,\ldots,T_m,C)$, where each $T_i$ 
is a decorated tree and $C\in G(\In(T_1+\ldots+\In(T_m))$, to the element represented 
by $(T'_i,\ldots,T'_r,C)$. Here $T'_i$ is obtained by grafting the roots of $T_{p+1},\ldots,T_{p+k_i}$ together, 
where $p=k_1+\ldots+k_{i-1}$, and decorating the root of $T'_i$ by $\alpha_i\circ(\beta_1\oplus\ldots\oplus\beta_{k_i})$
if $\beta_j$ is the root node decoration of $T_{p+j}$.
\end{proof}

\begin{lem}\label{Ohat2} 
If $\scO$ is a $\Sigma$-free operad, there is a map of $\hscO$-categories
$$
\tau :M=\scI^\ast\int W^G_\ast\longrightarrow \widehat{\scI\int F^G},
$$
natural in $G$, which is objectwise a weak equivalence if $G$ is special.
\end{lem}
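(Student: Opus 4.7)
The plan is to construct $\tau$ as the categorical analogue of the space-level map $Q_n(G)\to M(G)^n$ from Theorem \ref{rect6}(2), and to deduce the weak equivalence claim by passing through the classifying space functor and reducing to that theorem.

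I would define $\tau_n:\scI^n\int W^G_n\to(\scI\int F^G)^n$ on representatives. An object on the left is $(([T_1],\ldots,[T_n]),(T_1,\ldots,T_n,C))$ with $C\in G(\In(T_1)+\cdots+\In(T_n))$. Writing each $T_i=\Theta_{k_i}\circ(T_{i,1}\oplus\cdots\oplus T_{i,k_i})$ and letting $\sigma_{i,j}:\underline{\In(T_{i,j})}\hookrightarrow\underline{\In(T_1)+\cdots+\In(T_n)}$ be the canonical input inclusion, set
\[
\tau_n\bigl(([T_i]),(T_1,\ldots,T_n,C)\bigr)=\bigl(([T_i],(T_i;G(\sigma_{i,1}^{\ast})(C),\ldots,G(\sigma_{i,k_i}^{\ast})(C)))\bigr)_{i=1}^n.
\]
The definition extends verbatim to morphisms. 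Well-definedness under equivariance and the shrinking relation is immediate from the functoriality of $G$ on $\hscO$; the only delicate point is the chopping relation, where the global $\widehat c$ used in $W^G_n$ and the local $\widehat c$ used in the $i$-th factor of $F^G$ agree after precomposition with the $\sigma_{i,j}^{\ast}$, again by functoriality of $G$. To verify that $\tau=(\tau_n)$ is a map of $\hscO$-functors, one inspects Lemma \ref{Ohat1} and the $\scO$-algebra structure $\bar\mu$ of Proposition \ref{hocolim1}: both perform the same graftings at the tree level and restrict $C$ (respectively the tuples of $G$-values) via the same canonical inclusions; compatibility with the projections $\sigma^{\ast}$ is equally direct. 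Naturality in $G$ is transparent.

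For the weak equivalence claim when $G$ is special, apply the classifying space functor, which preserves finite products and detects weak equivalences in $\Cat$. Thomason's theorem (Proposition \ref{hocolim5} and its proof, which applies verbatim to $\scI^n$-indexed diagrams) gives natural weak equivalences $B(\scI\int F^G)\simeq\hocolim_{\scI}BF^G$ and $B(\scI^n\int W^G_n)\simeq\hocolim_{\scI^n}BW^G_n$. Combined with the realization-commutes-with-products identity $(\hocolim_{\scI}F^{BG})^n\cong\hocolim_{\scI^n}F^{BG}_{\mathrm{prod}}$, where $F^{BG}_{\mathrm{prod}}([T_1],\ldots,[T_n]):=\prod_iF^{BG}([T_i])$, and with Proposition \ref{hocolim6} ($BF^G\cong F^{BG}$), the problem reduces to showing that the induced map of $\scI^n$-diagrams $BW^G_n\to F^{BG}_{\mathrm{prod}}$ is an objectwise weak equivalence. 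As in the proof of Proposition \ref{hocolim6}, $\Sigma$-freeness yields natural identifications
\[
BW^G_n([T_1],\ldots,[T_n])\cong\prod_i\underline{B\scO}(T_i)\times_{\prod_i\Aut(T_i)}BG(\In(T_1)+\cdots+\In(T_n)),
\]
\[
F^{BG}_{\mathrm{prod}}([T_1],\ldots,[T_n])\cong\prod_i\underline{B\scO}(T_i)\times_{\prod_i\Aut(T_i)}\prod_{i,j}BG(\In(T_{i,j})),
\]
and $B\tau_n$ is induced by the input-projection $BG(\In(T_1)+\cdots+\In(T_n))\to\prod_{i,j}BG(\In(T_{i,j}))$ on the $BG$-factor. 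This is a homotopy equivalence precisely when $G$ (and hence $BG$) is special. Since $\prod_i\underline{B\scO}(T_i)$ is a numerable principal $\prod_i\Aut(T_i)$-space by Lemma \ref{rect_6b} and \ref{rect_6a}(4), the fiberwise equivalence descends to a weak equivalence of the Borel constructions by \ref{rect_6a}(2), and the resulting objectwise equivalence of $\scI^n$-diagrams induces a weak equivalence of hocolims by the argument of Proposition \ref{hocolim8}(2).

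The main obstacle is this last reduction: one has to identify both sides of $B\tau_n$ explicitly as Borel constructions over the common group $\prod_i\Aut(T_i)$, verify that under these identifications the map becomes the natural input-projection on the $BG$-factor, and then invoke the $\Sigma$-freeness machinery of \S\ref{simpleversion} to propagate the specialness equivalence to the full $\scI^n$-diagrams. The construction of $\tau$ and the verification of $\hscO$-compatibility are, in comparison, routine bookkeeping modeled on Theorem \ref{rect6}(2).
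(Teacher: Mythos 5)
Your proposal is correct and follows essentially the same route as the paper: the same formula for $\tau_n$ via the canonical input inclusions $\sigma_{i,j}^{\ast}$, the same reduction of the specialness claim to the input-projection $G(\In(T_1)+\cdots+\In(T_n))\to\prod_{i,j}G(\In(T_{i,j}))$ being an equivalence, and the same use of $\Sigma$-freeness through Lemma \ref{rect_6b} and \ref{rect_6a}(2) to descend along the free $\Aut$-quotients. You merely spell out in detail (Borel-construction identifications, Thomason's theorem, objectwise equivalence of $\scI^n$-diagrams) what the paper compresses into its citations of Lemma \ref{operads6} and \ref{rect_6a}(2).
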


\begin{proof}
$\tau(n):M(n)\longrightarrow(\scI\int F^G)^n$ sends a representing element $(T_1,\ldots,T_n,C)$ to 
$((T_1,G(\sigma^\ast_1)(C)),\ldots,(T_n,G(\sigma^\ast_n)(C))$, where $\sigma_i:\underline{\In(T_i)}\to\underline{\In(T_1)+\ldots+\In(T_n)}$ is the canonical inclusion. 

By construction, this defines a map of $\hscO$-categories.

If $G$ is special the map
$$
(G(\sigma^\ast_1),\ldots,G(\sigma^\ast_n)):G(\In(T_1)+\ldots+\In(T_n))\rightarrow \prod^n_{i=1} G(\In(T_i))
$$
is a weak equivalence. Consequently, $\tau$ is a weak equivalence because $\scO$ is $\Sigma$-free (see Lemma \ref{operads6} and
also \ref{rect_6a}(2)).
\end{proof}

\begin{lem}\label{Ohat3} 
There is a map of $\hscO$-categories
$$
\varepsilon:M=\scI^\ast\int W^G_\ast\longrightarrow G,
$$
natural in $G$, which is objectwise a weak equivalence.
\end{lem}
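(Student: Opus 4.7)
The construction mirrors that of $\varepsilon$ in Proposition \ref{change1}. On a representative $(T_1,\ldots,T_n,C)$ of $M(n)$, with $C\in G(\In(T_1)+\cdots+\In(T_n))$, I would set
\[
\varepsilon_n(T_1,\ldots,T_n,C)=G(a_1\oplus\cdots\oplus a_n)(C),
\]
where $a_i\in\scO(\In(T_i),1)$ is the operad composition of all node decorations of $T_i$ and $a_1\oplus\cdots\oplus a_n$ is regarded as a morphism in $\hscO(\In(T_1)+\cdots+\In(T_n),n)$. Well-definedness under the equivariance relation follows from $\Sigma$-equivariance of operad composition and functoriality of $G$. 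Under a shrinking morphism of $\scI^n$ the value $a_i$ is unchanged (composing two adjacent decorations inside $T_i$ does not alter the total composition), and $W^G_n$-shrinking leaves $C$ untouched. Under a chopping morphism in the $i$th coordinate replacing $T_i$ by $T'_i$ and $C$ by $G(\widehat{c})(C)$, associativity of operad composition gives $a_i=a'_i\circ\widehat{c}$ in $\hscO$, so $G$-functoriality yields $G(a_1\oplus\cdots\oplus a_n)(C)=G(a_1\oplus\cdots\oplus a'_i\oplus\cdots\oplus a_n)(G(\widehat{c})(C))$. Hence $\varepsilon_n$ descends to a well-defined functor on the Grothendieck construction.

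Next I would verify that the family $\{\varepsilon_n\}$ assembles into a map of $\hscO$-categories, with naturality in $G$ being immediate. For a projection $\sigma^\ast\in\hscO(l,k)$, the required compatibility reduces to the $\hscO$-identity
\[
\sigma^\ast\circ(a_1\oplus\cdots\oplus a_l)=(a_{\sigma(1)}\oplus\cdots\oplus a_{\sigma(k)})\circ\sigma(\In(T_1),\ldots,\In(T_l))^\ast,
\]
which is exactly the composition formula from \ref{operads4}, transported through $G$ by functoriality. For an operad morphism $\alpha=\alpha_1\oplus\cdots\oplus\alpha_r\in\hscO(m,r)$ acting on $M$ by grafting the input trees into $r$ clusters and composing root decorations, associativity of operad composition gives $a'_i=\alpha_i\circ(a_{p+1}\oplus\cdots\oplus a_{p+k_i})$ for the composite node decoration of the grafted tree $T'_i$; the interchange $\alpha\circ(a_1\oplus\cdots\oplus a_m)=a'_1\oplus\cdots\oplus a'_r$ in $\scO$ combined with $G$-functoriality yields the desired compatibility.

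Finally, to show each $\varepsilon_n$ is a weak equivalence, I would imitate the endofunctor argument in the proof of Proposition \ref{change1}. Define a section $s_n:G(n)\to M(n)$ by $s_n(g)=(\Theta_1,\ldots,\Theta_1,g)$ with every $\Theta_1$ decorated by $\id\in\scO(1)$; then $\varepsilon_n\circ s_n=\Id_{G(n)}$. Let $J_n:M(n)\to M(n)$ be the endofunctor that sends $([T_1],\ldots,[T_n],C)$ to $([T'_1],\ldots,[T'_n],C)$, where $T'_i=\Theta_1\circ T_i$ with the newly introduced root decorated by $\id$. Shrinking the $n$ newly created internal edges defines a natural transformation $J_n\Rightarrow\Id_{M(n)}$ (composition with $\id$ is trivial and $W^G_n$-shrinking does not touch $C$), while chopping these same edges defines a natural transformation $J_n\Rightarrow s_n\circ\varepsilon_n$ (each chop replaces some $T_i$ by $\Theta_1$ and applies $G$ of the corresponding $\widehat{a_i}$; composing all $n$ chops applies $G(a_1\oplus\cdots\oplus a_n)$ to $C$). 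Passing to classifying spaces yields homotopies $\Id\simeq BJ_n\simeq Bs_n\circ B\varepsilon_n$, so $B\varepsilon_n$ is a homotopy equivalence and $\varepsilon_n$ is a weak equivalence in $\Cat$. The main point of care is matching the composition formula of \ref{operads4} with $G$-functoriality in the $\hscO$-equivariance step; once that is handled, the rest follows the template of Proposition \ref{change1} and, notably, requires neither $\Sigma$-freeness of $\scO$ nor specialness of $G$, consistent with the statement.
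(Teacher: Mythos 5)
Your proposal is correct and follows essentially the same route as the paper: the paper also defines $\varepsilon(n)$ by chopping off the roots (i.e.\ applying $G(a_1\oplus\cdots\oplus a_n)$ for the composite decorations $a_i$), takes the same section $s_n$ via identity-decorated copies of $\Theta_1$, and produces the homotopies from the same endofunctor $J$ with the shrinking and chopping natural transformations $J\Rightarrow\Id$ and $J\Rightarrow s_n\circ\varepsilon(n)$, exactly as in Proposition \ref{change1}. Your additional verifications of well-definedness and of $\hscO$-compatibility via the composition formula of \ref{operads4} are details the paper leaves implicit, and your observation that neither $\Sigma$-freeness nor specialness is needed is consistent with the statement.
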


\begin{proof}
$\varepsilon(n):M(n)\to G(n)$ is defined on $(T_1,\ldots,T_n,C)$ by chopping off the roots of $T_1,\ldots,T_n$ as
 explained in the definition of $F^G$. 
To prove that $\varepsilon(n)$ is a weak equivalence we proceed as in the proof of Proposition \ref{change1}.
The functor $\varepsilon(n)$ has a section $s_n:G(n)\to M(n)$ sending $C\in G(n)$ to $(\Theta_1,\ldots,\Theta_1,C)\in M(n)$, 
where the node of $\Theta_1$ is decorated by the identity.

We define a functor $J:\scI^n\int W^G_n\to\scI^n\int W^G_n$ in the same way as in Proposition \ref{change1}, we map $([T_1],\ldots,[T_n],X)$ 
to $(j[T_1],\ldots,j[T_n];j(X))$ 
with the difference that $j(X)$ is obtained from $X$ by decorating each of the $n$ new root nodes by the identity. By shrinking 
and chopping the incoming edges 
to the root nodes we again obtain natural transformations $J\Rightarrow\Id$ and $J\Rightarrow s_n\circ\varepsilon(n)$.
\end{proof}

Combining the preceding three lemmas we obtain:

\begin{theo}\label{Ohat4}
Let $\scO$ be a $\Sigma$-free operad in $\Cat$. Then there are functors
$$
\scI\int F^{(-)}:\Cat^{\widehat{\scO}}\to \scO\mbox{-}\Cat\quad\textrm{ and }\quad
\scI^\ast\int W^{(-)}_\ast: \Cat^{\widehat{\scO}}\to\Cat^{\widehat{\scO}}
$$
and natural transformations of functors $\Cat^{\widehat{\scO}}\to\Cat^{\widehat{\scO}}$
$$
\widehat{\scI\int F^{(-)}}\stackrel{\tau}{\longleftarrow} \scI^\ast\int W^{(-)}_\ast \stackrel{\varepsilon}{\longrightarrow} \Id
$$
such that 
\begin{enumerate}
	\item each $\varepsilon(G)(n)$ is a weak equivalence
	\item if $G$ is a special, then each $\tau(G)(n) $ is a weak equivalence.\hfill\ensuremath{\Box}
\end{enumerate}
\end{theo}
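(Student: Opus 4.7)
The plan is to assemble the theorem directly from Lemmas \ref{Ohat1}, \ref{Ohat2}, and \ref{Ohat3}, which already encapsulate all of the non-trivial content. The only extra work is to establish functoriality in $G$ and to verify naturality of the two transformations; the weak equivalence assertions (1) and (2) are then literally the conclusions of Lemmas \ref{Ohat3} and \ref{Ohat2}.

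First I would check that $G\mapsto \scI\int F^G$ defines a functor $\Cat^{\widehat{\scO}}\to \scO\mbox{-}\Cat$. A morphism $\phi:G_1\to G_2$ in $\Cat^{\widehat{\scO}}$ produces, for each isomorphism class $[T]\in \scI$, a functor $F^\phi([T]):F^{G_1}([T])\to F^{G_2}([T])$ which is the identity on the $\underline{\scO}(T)$-factor and applies the appropriate components of $\phi$ on the $G$-factors. Naturality of $\phi$ with respect to the morphisms $\tau^\ast$ and $\widehat{c}$ in $\hscO$ used in the shrinking and chopping rules of Section \ref{diagram} ensures that $F^\phi$ is a natural transformation of $\scI$-diagrams. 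Grothendieck-construction functoriality then gives a functor $\scI\int F^\phi$, and the explicit recipe for the $\scO$-action in Proposition \ref{hocolim1} shows that this functor respects the $\scO$-algebra structure. Post-composing with $\widehat{(-)}:\scO\mbox{-}\Cat\to \Cat^{\widehat{\scO}}$ yields the functor $\widehat{\scI\int F^{(-)}}$ in the statement.

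Next I would carry out the analogous verification for $\scI^\ast\int W^{(-)}_\ast$. Lemma \ref{Ohat1} provides the $\hscO$-structure at each fixed $G$, while a morphism $\phi:G_1\to G_2$ of $\hscO$-diagrams again induces a natural transformation $W^\phi_n: W^{G_1}_n\to W^{G_2}_n$ of $\scI^n$-diagrams by acting as the identity on the $\underline{\scO}(T_i)$-factors and through the components of $\phi$ on the $G$-factors. A quick inspection of the formulas for $M(\sigma^\ast)$ and $M(\alpha)$ in the proof of Lemma \ref{Ohat1} shows that these assemble to a morphism in $\Cat^{\widehat{\scO}}$, giving the second functor. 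Lemmas \ref{Ohat2} and \ref{Ohat3} explicitly assert that $\tau$ and $\varepsilon$ are natural in $G$, so they supply the required natural transformations between these functors; conclusions (1) and (2) then follow.

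The only place one has to be careful is the bookkeeping needed to confirm that all constructions on representing decorated trees descend to well-defined functors modulo the equivariance, shrinking, and chopping relations, and that these descents really are natural in $G$ and compatible with the $\hscO$-structure. I do not expect a conceptual obstacle here, since the combinatorics is exactly parallel to that already handled in Sections \ref{diagram}--\ref{Ohat}; the check is routine but tedious and is the one genuine piece of work in going from the three lemmas to the theorem.
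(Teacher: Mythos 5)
Your proposal matches the paper's own proof: the theorem is stated there as an immediate consequence of Lemmas \ref{Ohat1}, \ref{Ohat2}, and \ref{Ohat3} ("Combining the preceding three lemmas we obtain"), with the functoriality in $G$ and naturality of $\tau$ and $\varepsilon$ left as the routine verification you describe. Your additional bookkeeping remarks are consistent with how the analogous checks are carried out elsewhere in the paper (e.g.\ Proposition \ref{hocolim8} and Section \ref{diagram}), so the argument is correct and essentially identical in approach.
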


\section{Comparing $\hat{\scO}$-spaces $G$ with $\hocolim_{\scI}\ F^G$}\label{recttop}

As  one would expect, there are topological versions of the constructions and results of Section \ref{Ohat}. We will give a short account of these.
In the process we will use the following properties of the homotopy colimit construction (e.g. see \cite[Prop. 3.1]{HV}):

\begin{leer}\label{tecttop1}
 Let $\scC$ and $\scD$ be small categories, $F,\ G:\scC\to \scD$ functors, $X:\scC\to \Top$ and $Y:\scD\to \Top$ diagrams,
 $$\tau: F\to G, \quad \alpha:X\to Y\circ F,\quad \beta:X\to Y\circ G$$
 natural transformations such that $(Y\star \tau)\circ \alpha = \beta$.
 $$\xymatrix{
 \scC \ar[rr]^{F,G}\ar[rd]_X && \scD\ar[ld]^Y &\quad & & X\ar[ld]_\alpha \ar[rd]^\beta &\\
 & \Top &&& Y\circ F\ar[rr]^{Y\star\tau} && Y\circ G
 }
 $$
 Then $F$ and $\alpha$ induce a map
 $$B(\ast,F,\alpha):\hocolim_{\scC}\ X\to \hocolim_{\scD}\ Y$$
 while $\tau$ defines a homotopy
 $$B(\ast,F,\alpha)\simeq B(\ast,G,\beta)$$
 (see Definition \ref{hocolim2} and \cite[3.1]{HV}).
\end{leer}

Let $\scO$ be an operad in $\Top$ and $G:\hscO\to \Top$ an
$\hscO$-space. Let
$$
V_n^G:\scI ^n\to \Top
$$
be the topological version of the diagram $W_n^G$, defined in exactly the same way. For a morphism $f\in \hscO(l,k)$ the corresponding map described in the proof
of Lemma \ref{Ohat1} defines a functor $\scI(f):\scI^l\to \scI^k$ together with a natural transformation 
$$\overline{f}:V_l^G\to V_k^G\circ \scI(f)$$
which induces a map
$$
\hocolim_{\scI^l}\ V_l^G\to \hocolim_{\scI^k}\ V_k^G,
$$
and we obtain an $\hscO$-space
$$
\hocolim_{\scI^\ast}\ V_\ast ^G: \hscO\to \Top,\quad n\mapsto \hocolim_{\scI^n}\ V_n^G.
$$

\begin{theo}\label{recttop2}
 If $\scO$ is a $\Sigma$-free operad in $\Top$. Then there are functors
 $$
 \hocolim_{\scI}\ F^{(-)}: \Top^{\hscO}\to \scO\mbox{-}\Top \quad \textrm{ and }\quad 
 \hocolim_{\scI^\ast}\ V_\ast ^{(-)}:\Top^{\hscO}\to\Top^{\hscO}
 $$
 and natural transformations of functors $\Top^{\hscO}\to\Top^{\hscO}$
$$
\widehat{\hocolim_{\scI}\ F^{(-)}}\stackrel{\tau}{\longleftarrow}  \hocolim_{\scI^\ast}\ V_\ast ^{(-)}  \stackrel{\varepsilon}{\longrightarrow} \Id
$$
such that 
\begin{enumerate}
	\item each $\varepsilon(G)(n)$ is a weak equivalence
	\item if $G$ is a special, each $\tau(G)(n)$ is a weak equivalence.
\end{enumerate}
\end{theo}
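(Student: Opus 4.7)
The strategy is to imitate, step by step, the three-lemma proof of Theorem \ref{Ohat4}, replacing the Grothendieck construction $\scI\int(-)$ by $\hocolim_{\scI}(-)$ everywhere and invoking \ref{tecttop1} to convert natural transformations of diagrams into homotopies of induced maps on homotopy colimits. First I would verify that $\hocolim_{\scI^\ast}\ V^{(-)}_\ast$ really takes values in $\Top^{\hscO}$: the functors $\scI(f):\scI^l\to\scI^k$ and natural transformations $\overline{f}:V_l^G\to V_k^G\circ\scI(f)$ described before the theorem, together with \ref{tecttop1}, produce the required maps $f_\ast:\hocolim_{\scI^l}\ V_l^G\to\hocolim_{\scI^k}\ V_k^G$, with strict functoriality in $f$ and naturality in $G$ inherited from the diagram level.

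Next, for $\varepsilon$ I would define $\varepsilon(G)(n):\hocolim_{\scI^n}\ V_n^G\to G(n)$ exactly as in Lemma \ref{Ohat3}, via the natural transformation from $V_n^G$ to the constant diagram $G(n)$ which chops the roots of all $n$ trees and applies the resulting composite operation to the $G$-component. To see it is a weak equivalence I would reproduce the section-plus-functor argument from Lemma \ref{Ohat3}: build $s_n:G(n)\to\hocolim_{\scI^n}\ V_n^G$ by inserting $n$ copies of $\Theta_1$ with identity decoration, and the functor $J:\scI^n\to\scI^n$ grafting a $\Theta_1$ onto each coordinate. The two natural transformations $J\Rightarrow\Id$ and $J\Rightarrow s_n\circ\varepsilon(G)(n)$ produced by shrinking, respectively chopping, the newly created root edges yield by \ref{tecttop1} homotopies $\Id\simeq J_\ast\simeq s_n\circ\varepsilon(G)(n)$, which together with $\varepsilon(G)(n)\circ s_n=\Id$ shows that $\varepsilon(G)(n)$ is a homotopy equivalence. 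Verifying that $\varepsilon$ respects the $\hscO$-structure is formal from its definition.

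For $\tau$, I would first construct a map of $\scI^n$-diagrams $V_n^G\Rightarrow\prod_{i=1}^{n}F^G\circ\pi_i$, where $\pi_i:\scI^n\to\scI$ is the $i$-th projection, defined on representatives by $(T_1,\ldots,T_n,C)\mapsto((T_i,G(\sigma_i^\ast)(C)))_{i=1}^n$ exactly as in Lemma \ref{Ohat2}. Applying $\hocolim_{\scI^n}$ and composing with the canonical Fubini comparison $\hocolim_{\scI^n}\prod_i F^G\circ\pi_i\to\prod_i\hocolim_{\scI}\ F^G$ yields $\tau(G)(n)$; the Fubini map is in fact a homeomorphism because $B_p(\ast,\scI^n,\prod_iF^G\circ\pi_i)\cong\prod_iB_p(\ast,\scI,F^G)$ as simplicial spaces and topological realization commutes with finite products of $k$-spaces. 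When $G$ is special, Lemma \ref{rect_6b} makes $\underline{\scO}(T_1)\times\ldots\times\underline{\scO}(T_n)$ a numerable principal $\Aut(T_1)\times\ldots\times\Aut(T_n)$-space, so the coend description of $V_n^G$ combined with fact \ref{rect_6a}(2) shows that the diagram map $V_n^G\Rightarrow\prod_iF^G\circ\pi_i$ is pointwise a weak equivalence, hence induces a weak equivalence on homotopy colimits.

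The main technical obstacle I anticipate is justifying that the pointwise weak equivalence $V_n^G\Rightarrow\prod_iF^G\circ\pi_i$ of $\scI^n$-diagrams induces a weak equivalence on $\hocolim$, which requires a properness or cofibrancy hypothesis on the bar construction; the numerable-principal-bundle structure provided by Lemma \ref{rect_6b}, together with the explicit coend presentation of both diagrams, should provide the necessary cofibrancy at each level of the simplicial bar construction so that the gluing lemma applies levelwise. A secondary subtlety is checking that $\tau$ is a map of $\hscO$-spaces (rather than just a family of maps indexed by arity); this follows once one notes that the diagram map $V_n^G\Rightarrow\prod_iF^G\circ\pi_i$ is compatible with the transition functors $\scI(f)$ and natural transformations $\overline{f}$ furnishing the $\hscO$-structures, so the induced homeomorphism and subsequent weak equivalence assemble into a transformation in $\Top^{\hscO}$.
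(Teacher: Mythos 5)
Your proposal is correct and follows essentially the same route as the paper: $\varepsilon$ and $\tau$ are defined on the diagram level exactly as in Lemmas \ref{Ohat3} and \ref{Ohat2}, the section $s_n$ and the self-map $J$ with the shrinking/chopping transformations are transported to homotopy colimits via \ref{tecttop1}, and the identification $\hocolim_{\scI^n}\prod_iF^G\circ\pi_i\cong(\hocolim_{\scI}F^G)^n$ uses that realization preserves finite products. Your worry about cofibrancy for the last step is unnecessary: since \ref{rect_6a}(2) gives genuine (not just weak) objectwise homotopy equivalences, the two-sided bar construction over the discrete category $\scI^n$ preserves them by \cite[Prop.\ 3.1]{HV}, which is exactly what the paper invokes.
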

\begin{proof}
 The maps $\tau_n$, defined on representatives like the maps $\tau(n)$ in the proof of Lemma \ref{Ohat2}, define a map from the diagram $V_n^G$ to the diagram
 $$(F^G,\ldots,F^G):\scI^n\to \Top.$$
 Since topological realization preserves products it induces a map 
 $$\tau_n:\hocolim_{\scI^n}\ V_n^G \to (\hocolim_{\scI}\ F^G)^n.$$
 By construction, the $\tau_n$ define a map of $\hscO$-spaces. If $G$ is special, the map of diagrams 
 $V_n^G\to (F^G,\ldots,F^G)$ is objectwise a homotopy equivalence inducing a homotopy equivalence $\tau_n$.
 
 Let $\ast$ stand for the category with one object $0$ and the identity morphism, let $G(n):\ast \to \Top$ be the functor sending $0$ to $G(n)$, and let
 $P_n:\scI^n\to \ast$ be the projection. The maps $\varepsilon_n$ of the proof of Lemma \ref{Ohat3} define natural transformations $\beta_n: V_n^G\to G(n)\circ P_n$, thus inducing  maps
 $$\varepsilon_n=B(\ast,P_n,\beta_n): \hocolim_{\scI^n}\ V_n^G\to G(n)$$
 which define a map of $\hscO$-spaces.

Let $j:\scI\to \scI$ be the functor defined in the proof of Proposition \ref{change1}. Let $S:\ast\to \scI^n$ send $0$ to $([\Theta_1],\ldots,[\Theta_1])$ and let $\gamma: G(n)\to V^G_n\circ S$ be
the natural transformation sending $C\in G(n)$ to the element represented by $(\Theta_1,\ldots, \Theta_1,C)$ where the root nodes are decorated by identities. The pair $(S,\gamma)$ induces a section
$$
s=B(\ast,S,\gamma): G(n)\to \hocolim_{\scI^n}\ V_n^G$$ of $\varepsilon_n.
$
 The functor $j^n:\scI^n\to \scI^n$ together with the natural transformation $\alpha :V_n^G\to V_n^G\circ j^n$ sending a representative
$(T_1,\ldots,T_n,C)$ to $(j(T_1),\ldots,j(T_n),C)$, where the added root nodes are decorated by identities, define a selfmap
$$J:\hocolim_{\scI^n}\ V_n^G\to \hocolim_{\scI^n}\ V_n^G.$$
There are natural transformations $sh:j^n\to \Id$ and $ch:j^n\to S\circ P_n$ defined by shrinking respectively chopping the incoming edges to the root nodes. Since
$$ (V^G_n\star sh)\circ \alpha= \id\quad \textrm{and} \quad (V^G_n\star ch)\circ \alpha = \gamma\circ \beta_n$$
there are homotopies $J\simeq id$ and $J\simeq S\circ P_n$.
\end{proof}

\begin{coro}\label{recttop3}
 Let $\scO$ be a $\Sigma$-free operad in $\Top$. Then there is a chain of natural transformations
of functors $ \Top^{\hscO}\to \scO\mbox{-}\Top $ connecting the functor $M$ of Section \ref{simpleversion}
and the functor $\hocolim_{\scI}\ F^{(-)}$, which are weak equivalences when evaluated at 
special $\hscO$-spaces.
\end{coro}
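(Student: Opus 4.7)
The plan is to transport the comparison $\widehat{M(G)} \stackrel{\tau}{\longleftarrow} Q(G) \stackrel{\varepsilon}{\longrightarrow} G$ from Theorem \ref{rect6} through the functor $\hocolim_{\scI}\ F^{(-)}: \Top^{\hscO} \to \scO\mbox{-}\Top$, and then splice the result onto $M(G)$ using Proposition \ref{change4}. In other words, I would use $\hocolim_{\scI}\ F^{Q(G)}$ as a bridge between the two $\scO$-algebras $M(G)$ and $\hocolim_{\scI}\ F^G$.

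Concretely, since the $\hscO$-space $Q(G)$ and the maps $\tau, \varepsilon$ are natural in $G$, applying the functor $\hocolim_{\scI}\ F^{(-)}$ produces natural transformations of $\scO$-algebras
$$
\hocolim_{\scI}\ F^{\widehat{M(G)}} \longleftarrow \hocolim_{\scI}\ F^{Q(G)} \longrightarrow \hocolim_{\scI}\ F^G.
$$
Proposition \ref{change4}(1), applied to the $\scO$-algebra $M(G)$, supplies a natural weak equivalence of $\scO$-algebras $\hocolim_{\scI}\ F^{\widehat{M(G)}} \to M(G)$. Splicing these together yields the chain
$$
M(G) \longleftarrow \hocolim_{\scI}\ F^{\widehat{M(G)}} \longleftarrow \hocolim_{\scI}\ F^{Q(G)} \longrightarrow \hocolim_{\scI}\ F^G
$$
of natural transformations of functors $\Top^{\hscO} \to \scO\mbox{-}\Top$, which is the required chain.

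To verify these are weak equivalences when $G$ is special: the leftmost arrow is a weak equivalence unconditionally by Proposition \ref{change4}(1) (here one uses that $\scO$ is $\Sigma$-free). For the middle arrow, Theorem \ref{rect6}(2) asserts that $\tau: Q(G) \to \widehat{M(G)}$ is objectwise a weak equivalence of $\hscO$-spaces under the $\Sigma$-freeness and specialness hypotheses, so Proposition \ref{hocolim8}(2) upgrades it to a weak equivalence of $\scO$-algebras. For the rightmost arrow, Theorem \ref{rect6}(1) shows that $\varepsilon: Q(G) \to G$ is always objectwise a weak equivalence, and Proposition \ref{hocolim8}(2) again yields a weak equivalence of $\scO$-algebras.

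There is no substantial obstacle here: the argument is a purely formal assembly of the results of Sections \ref{simpleversion}, \ref{hocolim}, \ref{change}, and \ref{recttop}. The only book-keeping points to check are that every construction in sight is natural in $G$ (immediate from the explicit descriptions of $Q$, $\tau$, $\varepsilon$, and of the functor $\hocolim_{\scI}\ F^{(-)}$) and that the $\Sigma$-freeness hypothesis on $\scO$ is carried through each invocation of Propositions \ref{change4} and \ref{hocolim8}.
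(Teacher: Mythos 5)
Your argument is correct, but it is essentially the mirror image of the one the paper gives, assembled from different key lemmas. You transport the comparison diagram $\widehat{M(G)}\stackrel{\tau}{\longleftarrow} Q(G)\stackrel{\varepsilon}{\longrightarrow} G$ of Theorem \ref{rect6} through the functor $\hocolim_{\scI}\ F^{(-)}$ (invoking Proposition \ref{hocolim8}(2) for homotopy invariance) and then contract $\hocolim_{\scI}\ F^{\widehat{M(G)}}$ onto $M(G)$ via Proposition \ref{change4}(1). The paper instead transports the comparison diagram $\widehat{\hocolim_{\scI}\ F^{G}}\longleftarrow \hocolim_{\scI^\ast}\ V_\ast^{G}\longrightarrow G$ of Theorem \ref{recttop2} through the functor $M$ of Section \ref{simpleversion}, obtaining $M(G)\leftarrow M(\hocolim_{\scI^\ast}\ V_\ast^G)\rightarrow M(\widehat{\hocolim_{\scI}\ F^G})$, and then contracts $M(\widehat{\hocolim_{\scI}\ F^G})$ onto $\hocolim_{\scI}\ F^G$ via Proposition \ref{rect7}. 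Both chains have the same length and both are purely formal; the difference is which of the two ``rectify-then-evaluate'' equivalences ($\hocolim_{\scI}\ F^{\widehat{X}}\simeq X$ versus $M(\widehat{X})\simeq X$) closes the loop, and correspondingly which comparison theorem (Theorem \ref{rect6} versus Theorem \ref{recttop2}) supplies the zig-zag. A modest advantage of your version is that the homotopy invariance needed for the two transported arrows is exactly the statement of Proposition \ref{hocolim8}(2), whereas the paper's version tacitly uses that $M$ carries objectwise equivalences of $\hscO$-spaces to equivalences, a fact that is only implicit in the filtration argument of Section \ref{simpleversion}; the price you pay is that your middle arrow leans on the harder part of Theorem \ref{rect6}(2) rather than on Theorem \ref{recttop2}(2).
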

 \begin{proof}
 We apply the rectification of Section \ref{simpleversion} to the diagram in Theorem \ref{recttop2} to obtain a diagram of weakly equivalent $\scO$-algebras
 $$
  M(G)\leftarrow M(\hocolim_{\scI^\ast}\ V_\ast ^G)\rightarrow M(\widehat{\hocolim_{\scI}\ F^G}).
 $$
 By Proposition \ref{rect7} the $\scO$-algebras $ M(\widehat{\hocolim_{\scI}\ F^G})$ and $\hocolim_{\scI}\ F^G$  are weakly equivalent.
\end{proof}
\section{From simplicial $\scO$-algebras to $\scO$-algebras}\label{simplicial}

Given a simplicial category $\scC_\ast:\Delta^{op}\longrightarrow \Cat$ there are the Bousfield-Kan map and the Thomason map

\begin{leer}\label{simpl1}
 $$
 |B(\scC_\ast)|\xleftarrow{\rho} \hocolim_{\Delta^{op}}\ B(\scC_\ast) \xrightarrow{\eta} B(\Delta^{\op}\int \scC_\ast)
 $$
\end{leer}
which are natural maps known to be homotopy equivalences by \cite[XII.3.4]{BK} or \cite[Theorem 18.7.4]{Hirsch}
respectively \cite[Theorem 1.2]{T}. So Thomason's homotopy colimit construction
in $\Cat$ replaces a simplicial category by a category in a nice way: their realizations in $\Top$ via the classifying space functor
are homotopy equivalent.

In this section we want to lift this result to simplicial $\scO$-algebras over a $\Sigma$-free operad $\scO$ in $\Cat$.

We start with the right map in \ref{simpl1} for which there is a more general version. Let $\scL$ be a small
indexing category. Let $\scO$ be an operad in $\scS$, where $\scS$ is $\Cat$ or $\Top$, and let $\scO\mbox{-}\scS^{\scL}$ denote the category
of $\scL$-diagrams of $\scO$-algebras in $\scS$. We have functors
$$\begin{array}{rcll}
 H_{\top}:\scO\mbox{-}\Top^{\scL}& \to &   \scO\mbox{-}\Top & \quad \textrm{if $\scO$ is an operad in $\Top$,}\\
 H_{\cat}:\scO\mbox{-}\Cat^{\scL} & \to & \scO\mbox{-}\Cat  & \quad \textrm{if $\scO$ is an operad in $\Cat$,}
  \end{array}
  $$
defined by $H_{\top}(X)=\hocolim_{\scI} F^{\hocolim_{\scL} \widehat{X}}$ for an $\scL$-diagram $X:\scL\to \scO\mbox{-}\Top$
and $H_{\cat}(D)=\scI\int F^{\scL\int \widehat{D}}$ for an $\scL$-diagram $D:\scL\to \scO\mbox{-}\Cat$, where 
$\widehat{X}:\widehat{\scO}\to \Top^{\scL}$ and $\widehat{D}:\widehat{\scO}\to \Cat^{\scL}$ are induced by $X$ and $D$ respectively.

\begin{prop}\label{simpl2} Let $\scO$ be a $\Sigma$-free operad in $\Cat$. Then there is a natural
weak equivalence
 $$
\eta: H_{\top}\circ B^{\scL}\Rightarrow B\circ H_{\cat}.
$$
of functors $\scO\mbox{-}\Cat^{\scL}\to B\scO\mbox{-}\Top$.
\end{prop}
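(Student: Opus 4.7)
My plan is to construct $\eta$ as a three-step composition of natural weak equivalences of $B\scO$-algebras, each invoking one of the earlier results of the paper.

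First, I would identify the $\widehat{B\scO}$-diagram in $\Top$ obtained by applying $\widehat{(-)}$ after $B^{\scL}$. By Lemma \ref{operads6a}(3), for $D:\scL\to\scO\mbox{-}\Cat$ we have a natural isomorphism $\widehat{BD}\cong B\widehat{D}$, where $\widehat{D}:\hscO\to\Cat^{\scL}$. Consequently, for every $n\in\hscO$,
\[
\hocolim_{\scL}\widehat{BD}(n)\;=\;\hocolim_{\scL} B\widehat{D}(n).
\]
Next, I would invoke Thomason's theorem (the generalization of the right map in \ref{simpl1} to an arbitrary small indexing category $\scL$, which is the content of \cite{T}, Theorem~1.2) to obtain, for each $n$, a natural weak equivalence $\hocolim_{\scL} B\widehat{D}(n)\xrightarrow{\sim} B(\scL\int\widehat{D}(n))$. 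Naturality in $n$ upgrades this to a map of $\widehat{B\scO}$-diagrams in $\Top$
\[
\eta^{T}:\hocolim_{\scL}\widehat{BD}\;\xrightarrow{\sim}\;B(\scL\int\widehat{D})
\]
which is objectwise a weak equivalence.

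Second, since $\scO$ is $\Sigma$-free in $\Cat$ and the classifying space functor preserves products and free discrete actions (Lemma \ref{operads6}), $B\scO$ is $\Sigma$-free in $\Top$. Hence Proposition \ref{hocolim8}(2) applied to the $\widehat{B\scO}$-diagram map $\eta^{T}$ yields a weak equivalence of $B\scO$-algebras
\[
\hocolim_{\scI} F^{\hocolim_{\scL}\widehat{BD}}\;\xrightarrow{\sim}\;\hocolim_{\scI} F^{B(\scL\int\widehat{D})}.
\]

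Third, setting $G=\scL\int\widehat{D}:\hscO\to\Cat$ (which is an $\hscO$-category by functoriality of the Grothendieck construction in the diagram), Proposition \ref{hocolim6} identifies $\hocolim_{\scI} F^{BG}\cong \hocolim_{\scI} B(F^{G})$ and its combination with Proposition \ref{hocolim5} supplies a weak equivalence of $B\scO$-spaces
\[
\hocolim_{\scI} F^{B(\scL\int\widehat{D})}\;\cong\;\hocolim_{\scI} B(F^{\scL\int\widehat{D}})\;\xrightarrow{\sim}\;B\bigl(\scI\int F^{\scL\int\widehat{D}}\bigr)\;=\;B(H_{\cat}(D)).
\]
Composing the three maps defines $\eta$ and shows it is a weak equivalence, and each constituent is natural in $D\in\scO\mbox{-}\Cat^{\scL}$.

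The main obstacle I anticipate is bookkeeping: I must check that each map in the chain respects the $B\scO$-algebra structures (which for the last step is Proposition \ref{hocolim5}, and for the middle step is built into Proposition \ref{hocolim8}), and that $\eta^{T}$ really is a natural transformation of $\widehat{B\scO}$-diagrams. The latter reduces to naturality of Thomason's nerve-level formula in the diagram variable, together with the observation that the $\hscO$-structure maps on both $\hocolim_{\scL}\widehat{BD}$ and $B(\scL\int\widehat{D})$ are induced levelwise from the corresponding structure on $\widehat{D}$, so they commute with the Thomason comparison cell by cell.
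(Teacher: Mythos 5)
Your proposal is correct and follows essentially the same route as the paper's own proof: the same three-step chain $H_{\top}(BD)\to \hocolim_{\scI} F^{B(\scL\int\widehat{D})}\cong \hocolim_{\scI}B(F^{\scL\int\widehat{D}})\to B(H_{\cat}(D))$, justified by Thomason's Theorem 1.2 together with Proposition \ref{hocolim8}, Proposition \ref{hocolim6}, and Proposition \ref{hocolim5} respectively. The only difference is that you spell out the bookkeeping (the identification $\widehat{BD}\cong B\widehat{D}$ and the compatibility of Thomason's map with the $\hscO$-structure) more explicitly than the paper does.
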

 
\begin{proof} Let  $ D :\scL\to \scO\mbox{-}\Cat$ be an $\scL$-diagram of $\scO$-algebras. We have natural maps of $B\scO$-spaces
$$
\begin{array}{rcl}
H_{\top}(BD)=\hocolim_{\scI} F^{\hocolim_{\scL} \widehat{BD}} & \to &
\hocolim_{\scI} F^{B(\scL\int \widehat{D})}\cong \hocolim_{\scI}B(F^{\scL\int \widehat{D}})\\
 & \to & B(\scI\int F^{\scL\int \widehat{D}})= B(H_\cat(D)).
\end{array}
$$
By \cite[Theorem 1.2]{T} Thomason's map defines a pointwise weak equivalence 
$$\hocolim_{\scL} \widehat{BD}\to B(\scL\int \widehat{D}),$$
so the first map is a weak equivalence by Proposition \ref{hocolim8}. 
For the isomorphism see Proposition \ref{hocolim6},
and the second map is a weak equivalence by Proposition \ref{hocolim5}.
\end{proof}
In general we cannot say much about the homotopy type of $H_{\top}(X)$ and $H_{\cat}(D)$. This is different if $\scL=\Delta^{\op}$ and $X$ is proper.
Here we call a simplicial space \textit{proper} if the inclusions $sX_n\subset X_n$ of the subspaces $sX_n$ of the degenerate elements of $X_n$ are
closed cofibrations for all $n$, and we call a simplicial $\scO$-space proper if its underlying space is proper.

\begin{prop}\label{simpl3} Let $\scO$ be a $\Sigma$-free operad in $\Top$ and let $X_\ast$ be a proper simplicial $\scO$-space. Then there
 is a weak equivalence of $\scO$-spaces
 $$\rho: H_{\top}(X_\ast)\to |X_\ast|$$
 natural with respect to homomorphisms of proper simplicial $\scO$-spaces.
\end{prop}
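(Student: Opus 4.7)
The plan is to construct $\rho$ as the composite of two natural weak equivalences of $\scO$-spaces, passing through the intermediate $\scO$-space $\hocolim_{\scI} F^{\widehat{|X_\ast|}}$.

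First I would work at the level of $\hscO$-diagrams. The associated $\hscO$-diagram $\widehat{X_\ast}:\hscO\to\Top^{\Delta^{\op}}$ sends $n$ to the simplicial space $X_\ast^n$. Since $X_\ast$ is proper as a simplicial space and properness is preserved under finite products of simplicial spaces, each $X_\ast^n$ is proper, so the Bousfield--Kan map
$$\rho_{\mathrm{BK}}:\hocolim_{\Delta^{\op}}\widehat{X_\ast}\longrightarrow |\widehat{X_\ast}|$$
is pointwise a weak equivalence by \cite[XII.3.4]{BK}. Because the Bousfield--Kan map is natural in the simplicial space, it assembles into a map of $\hscO$-diagrams. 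Since geometric realization preserves finite products in $k$-spaces, there is a canonical isomorphism $|X_\ast^n|\cong|X_\ast|^n$, which is compatible with the $\hscO$-structure and thus identifies $|\widehat{X_\ast}|\cong \widehat{|X_\ast|}$ as $\hscO$-spaces. Composing $\rho_{\mathrm{BK}}$ with this identification yields a pointwise weak equivalence of $\hscO$-spaces
$$\bar\rho:\hocolim_{\Delta^{\op}}\widehat{X_\ast}\longrightarrow \widehat{|X_\ast|}.$$

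Next I apply Proposition~\ref{hocolim8}(2) to $\bar\rho$ to obtain a weak equivalence of $\scO$-spaces
$$H_{\top}(X_\ast)=\hocolim_{\scI} F^{\hocolim_{\Delta^{\op}}\widehat{X_\ast}}\longrightarrow \hocolim_{\scI} F^{\widehat{|X_\ast|}}.$$
Then Proposition~\ref{change4}(1), applied to the $\scO$-space $|X_\ast|$, supplies a natural weak equivalence of $\scO$-spaces
$$\varepsilon:\hocolim_{\scI} F^{\widehat{|X_\ast|}}\longrightarrow |X_\ast|.$$
Defining $\rho$ as the composite gives the required weak equivalence of $\scO$-spaces, and naturality in homomorphisms of proper simplicial $\scO$-spaces follows from naturality of each constituent.

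The main point requiring care is verifying that $\bar\rho$ truly lives in the category of $\hscO$-diagrams; this amounts to checking that, for each morphism $\phi\in\hscO(l,k)$, the square relating the Bousfield--Kan maps for $X_\ast^l$ and $X_\ast^k$ together with the isomorphism $|X_\ast^n|\cong|X_\ast|^n$ commutes strictly. This compatibility follows from naturality of the Bousfield--Kan construction in simplicial maps and the fact that the $\hscO$-action on $\widehat{X_\ast}$ and on $\widehat{|X_\ast|}$ are both induced functorially from the $\scO$-action on $X_\ast$ via realization. No further essentially new input is required.
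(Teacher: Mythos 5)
Your proposal is correct and follows essentially the same route as the paper: factor $\rho$ through $\hocolim_{\scI} F^{\widehat{|X_\ast|}}$, use properness to make the Bousfield--Kan map a pointwise weak equivalence of $\hscO$-diagrams together with the identification $|\widehat{X_\ast}|\cong\widehat{|X_\ast|}$, and then apply Proposition \ref{hocolim8} and Proposition \ref{change4}. The extra care you take with properness of $X_\ast^n$ and the $\hscO$-compatibility of the Bousfield--Kan maps is sound but not a departure from the paper's argument.
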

\begin{proof}
 We have natural maps of $\scO$-spaces
 $$
 H_{\top}(X_\ast)=\hocolim_{\scI} F^{\hocolim_{\Delta^{\op}}\widehat{X_\ast}}\to \hocolim_{\scI}F^{|\widehat{X_\ast}|}\to |X_\ast|.
 $$
 The Bousfield-Kan map $\hocolim_{\Delta^{\op}}\widehat{X_\ast}\to |\widehat{X_\ast}|$ is pointwise a weak equivalence provided $X_\ast$ is proper,
 so that the first map is a weak equivalence by Proposition \ref{hocolim8}. Since topological realization preserves colimits 
 and finite products, we have a natural isomorphism
$
|\widehat{X_\ast}|\cong \widehat{|X_\ast|}
$, and the second map is a weak equivalence by Proposition \ref{change4}.
\end{proof}

Combining these results we obtain the passage from simplicial $\scO$-algebras in $\Cat$ to $\scO$-algebras in $\Cat$.

\begin{theo}\label{simpl4} Let $\scO$ be a $\Sigma$-free $\Cat$-operad, and let $B\scO\mbox{-}p\Top^{\Delta^{\op}}$ be the full subcategory of $B\scO\mbox{-}\Top^{\Delta^{\op}}$
of proper simplicial $B\scO$-spaces. Then there is a diagram
 $$\xymatrix{
 \scO\mbox{-}\Cat^{\Delta^{\op}}\ar[dd]_{H_{\cat}}\ar[rr]^{B^{\Delta^{\op}}}&& B\scO\mbox{-}p\Top^{\Delta^{\op}}\ar[dd]_{H_{\top}}^{\qquad\Rightarrow}  &\subset & B\scO\mbox{-}\Top^{\Delta^{\op}}
\ar[lldd]^{|-|}\\
 &\Leftarrow & &  \\
 \scO\mbox{-}\Cat\ar[rr]^B && B\scO\mbox{-}\Top
 }
 $$
commuting up to natural weak equivalences. 
 \end{theo}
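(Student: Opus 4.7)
The plan is to assemble the theorem from the two natural weak equivalences already constructed in Propositions \ref{simpl2} and \ref{simpl3}, together with one preliminary observation: that the levelwise classifying space functor $B^{\Delta^{\op}}$ factors through the subcategory $B\scO\mbox{-}p\Top^{\Delta^{\op}}$ of proper simplicial $B\scO$-spaces. Once this factorization is established, the left square commutes up to the natural weak equivalence $\eta:H_\top\circ B^{\Delta^{\op}}\Rightarrow B\circ H_\cat$ of Proposition \ref{simpl2}, and the right triangle commutes up to the natural weak equivalence $\rho:H_\top(X_\ast)\to|X_\ast|$ of Proposition \ref{simpl3}, applied to proper simplicial $B\scO$-spaces of the form $X_\ast=B^{\Delta^{\op}}(\scC_\ast)$.

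To verify the factorization, let $\scC_\ast:\Delta^{\op}\to\scO\mbox{-}\Cat$ be a simplicial $\scO$-algebra in $\Cat$. I need to show that each inclusion $sB\scC_n\subset B\scC_n$ of the subspace of degenerate elements is a closed cofibration. The space $B\scC_n=|N_\ast\scC_n|$ is the realization of a simplicial set, hence a CW complex whose cells correspond to non-degenerate simplices of $N_\ast\scC_n$. Each degeneracy $Bs_i:B\scC_{n-1}\to B\scC_n$ of the simplicial space $B\scC_\ast$ arises by realizing the simplicial map $N_\ast s_i$ on nerves, whose image is a simplicial subset of $N_\ast\scC_n$. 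Taking the union of these images and passing to realizations (which preserve colimits) exhibits $sB\scC_n$ as a subcomplex of $B\scC_n$; inclusions of subcomplexes of CW complexes are closed cofibrations. Hence $B^{\Delta^{\op}}(\scC_\ast)$ is a proper simplicial $B\scO$-space.

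Granting the factorization, the left square of the diagram commutes up to the natural weak equivalence of Proposition \ref{simpl2}, and the right triangle commutes up to the natural weak equivalence of Proposition \ref{simpl3}. Naturality of both weak equivalences in $\scC_\ast$ (with respect to homomorphisms of simplicial $\scO$-algebras in $\Cat$) is inherited from the naturality statements in those propositions. In particular, composing the two weak equivalences along the outside of the diagram yields a natural weak equivalence $|B^{\Delta^{\op}}(\scC_\ast)|\simeq B\circ H_\cat(\scC_\ast)$ in $B\scO\mbox{-}\Top$, which is the ultimate categorical-to-topological passage we are after.

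The only obstacle, such as it is, is the properness check; but this is essentially standard once one observes that $B\scC_\ast$ is the levelwise realization of the bisimplicial set $[m],[n]\mapsto N_m\scC_n$. All substantive work has already been carried out in Propositions \ref{simpl2} and \ref{simpl3} and in the rectification and homotopy colimit machinery of Sections \ref{Ohat} through \ref{simplicial}, so the theorem itself is a short packaging statement.
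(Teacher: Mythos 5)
Your proof is correct and is essentially the paper's own argument: the theorem is stated there as an immediate combination of Propositions \ref{simpl2} and \ref{simpl3}, with no further proof given. You additionally supply the (correct, and tacitly assumed in the paper) verification that $B^{\Delta^{\op}}$ lands in proper simplicial $B\scO$-spaces, which only strengthens the write-up.
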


Let $\const: \Cat \to \Cat^{\Delta^{\op}}$ be the constant simplicial object functor and let $\scC$ be an $\scO$-algebra.
Since $|B(\const\ \scC)|\cong  B(\scC)$ the functor $H\circ \const$ preserves the homotopy type. But we can do better. The
diagram
$$
\xymatrix{
\widehat{\scO} \ar[rr]^{\widehat{\scC}}\ar[rrd]_{\widehat{\const\ \scC}} && \Cat \ar[rrd]^{\Delta^{\op}\times -}\ar[d]^{\const}\\
&& \Cat{\Delta^{\op}}\ar[rr]^{\Delta^{\op}\int -} && \Cat
}
$$
commutes. 

\begin{prop}\label{simpl5}
 Let $\scO$ be a $\Sigma$-free $\Cat$-operad and let $\scC$ be an $\scO$-algebra. Then there are weak equivalences of
 $\scO$-algebras
 $$
 H(\const\ \scC)=\scI\int F^{\Delta^{\op}\int \widehat{\const\ \scC}}=\scI\int F^{\Delta^{\op}\times \widehat{\scC}}
 \xrightarrow{\pi} \scI\int F^{\widehat{C}}\xrightarrow{\varepsilon} \scC
 $$
 where $\pi$ is induced by the projection $\Delta^{\op}\times \widehat{\scC}\to \widehat{\scC}$ and $\varepsilon$ is the
 homomorphism of Proposition \ref{change1}.
\end{prop}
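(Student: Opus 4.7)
The plan is to reduce the proposition to applications of Proposition \ref{change1} and Proposition \ref{hocolim8}(1), after identifying the Grothendieck construction on a constant diagram as a product.

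First, I would verify the two initial equalities. The first, $H_{\cat}(\const\ \scC) = \scI\int F^{\Delta^{\op}\int \widehat{\const\ \scC}}$, is simply the definition of $H_{\cat}$. For the second equality, I would invoke the commutative diagram displayed immediately before the statement: it asserts that as $\widehat{\scO}$-diagrams in $\Cat$, $\Delta^{\op}\int \widehat{\const\ \scC} \cong \Delta^{\op}\times \widehat{\scC}$. This rests on the elementary observation that the Grothendieck construction $\scL\int \const\ \scD$ on a constant diagram collapses to the Cartesian product $\scL\times \scD$, because all transition functors are identities and morphisms in $\scL\int \const\ \scD$ therefore reduce to pairs of morphisms in $\scL$ and $\scD$.

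Next, the map $\varepsilon:\scI\int F^{\widehat{\scC}}\to \scC$ is a weak equivalence of $\scO$-algebras directly by Proposition \ref{change1}. The substantive step is to show that $\pi$ is a weak equivalence. I would observe that the projection $p:\Delta^{\op}\times \widehat{\scC}\to \widehat{\scC}$, where $\widehat{\scO}$ is taken to act trivially on the $\Delta^{\op}$ factor and in the usual way on $\widehat{\scC}$, is a natural transformation of $\widehat{\scO}$-diagrams in $\Cat$ whose $n$-th component is the ordinary projection $p_n:\Delta^{\op}\times \scC^n \to \scC^n$. Since $[0]\in \Delta$ is terminal, it is initial in $\Delta^{\op}$, so $B(\Delta^{\op})$ is contractible (a category with an initial object deformation-retracts onto it via the unique natural transformation from the constant functor at the initial object to the identity). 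Because the classifying space functor preserves products, applying $B$ to $p_n$ yields the projection off a contractible factor, hence a homotopy equivalence. Thus $p$ is objectwise a weak equivalence of $\widehat{\scO}$-diagrams, and Proposition \ref{hocolim8}(1), applied to the $\Sigma$-free operad $\scO$, yields that $\pi = \scI\int F^p$ is a weak equivalence of $\scO$-algebras.

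There is no genuine obstacle in this argument; the proposition is a direct assembly of previously established results. The only minor points needing verification are the matching of the $\widehat{\scO}$-actions in the identification $\Delta^{\op}\int \widehat{\const\ \scC}\cong \Delta^{\op}\times \widehat{\scC}$ and the contractibility of $B(\Delta^{\op})$, both of which are routine.
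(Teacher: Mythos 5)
Your proposal is correct and follows essentially the same route as the paper: the paper's proof likewise cites Propositions \ref{change1} and \ref{hocolim8} and rests on the observation that the projection $\Delta^{\op}\times \widehat{\scC}\to \widehat{\scC}$ is objectwise a weak equivalence (which you justify, correctly, via contractibility of $B(\Delta^{\op})$), with the identification $\Delta^{\op}\int \widehat{\const\ \scC}=\Delta^{\op}\times\widehat{\scC}$ supplied by the commutative diagram preceding the statement.
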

\begin{proof} This follows from Propositions \ref{change1} and \ref{hocolim8}, because the projection $\Delta^{\op}\times \widehat{\scC}\to \widehat{\scC}$
 is objectwise a weak equivalence.
\end{proof}

\begin{rema}\label{simpl6}
Our passage from simplicial algebras to algebras translates verbatim to $\Top$, but, of course, topological realization is
the preferred passage: it is well known that the topological realization of a simplicial $\scO$-space is an $\scO$-space in a canonical way.
\end{rema}
\section{An application}\label{application}
\begin{defi}\label{appli1}
 Let $\scO$ be an operad in $\Cat$ and let $\scP$ be an operad in $\Top$. In this section 
 a homomorphism of $\scP$-spaces is called a \textit{weak equivalence} if its underlying map 
 of spaces is a  weak homotopy equivalences, and a homomorphism of $\scO$-algebras $f: \scA\to \scB$
 is called a \textit{weak equivalence} if $Bf$ is a weak equivalence of $B\scO$-spaces.
 Two $\scP$-spaces $X$ and $Y$ and two $\scO$-algebras $\scA$ and $\scB$ are called
\textit{weakly equivalent} if there is a chain of weak equivalences connecting $X$ and $Y$ respectively $\scA$ and $\scB$.
\end{defi}

Let $\scO$ be a $\Sigma$-free operad in $\Cat$. We want to compare the categories $\scO\mbox{-}\Cat$ and $B\scO\mbox{-}\Top$. The classifying
space functor maps an $\scO$-algebra $\scC$ to the $B\scO$-algebra $B\scC$. In \cite{FV} we showed that for each $B\scO$-space $X$ there
is a simplicial $\scO$-algebra $\scA_\ast$ and a sequence of natural weak equivalences of $B\scO$-spaces connecting
$X$ and $|B\scA_\ast|$.
By Theorem \ref{simpl4} there is an $\scO$-algebra $\scC$ such that $B\scC$ and 
$|B\scA_\ast|$ are weakly equivalent $B\scO$-spaces. So after localization with respect to the weak equivalences the categories
$\scO\mbox{-}\Cat$ and $B\scO\mbox{-}\Top$ are equivalent.

Since $B\scO\mbox{-}\Top$ carries a Quillen model structure with the weak equivalences of \ref{appli1} its localization
$B\scO\mbox{-}\Top[\we^{-1}]$ with respect to these weak equivalences exists \cite[Theorem B]{SV2}. We do not know whether or not
$\scO\mbox{-}\Cat$ carries a model structure, but combining our previous results with a result of
Schlichtkrull and Solberg \cite{SS} we obtain:

\begin{theo}\label{appl3} Let $\scO$ be a $\Sigma$-free operad in $\Cat$. Then
 the localization $\scO\mbox{-}\Cat[\we^{-1}]$ exists and the classifying space functor
 induces an equivalence of categories
 $$
 \scO\mbox{-}\Cat[\we^{-1}]\quad\simeq\quad B\scO\mbox{-}\Top[\we^{-1}].$$
\end{theo}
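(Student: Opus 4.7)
The plan is to produce a homotopy inverse to $B$ at the level of localized categories by combining the FV construction of \cite{FV} with the passage $H_{\cat}$ from simplicial $\scO$-algebras to $\scO$-algebras established in Theorem \ref{simpl4}, and to invoke the Schlichtkrull-Solberg criterion for existence of the source localization.

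First I would establish the existence of $\scO\mbox{-}\Cat[\we^{-1}]$. The category $B\scO\mbox{-}\Top$ carries a Quillen model structure by \cite[Thm.~B]{SV2}, so $B\scO\mbox{-}\Top[\we^{-1}]$ exists. Since $\we$ in $\scO\mbox{-}\Cat$ is by definition $B^{-1}(\we)$, the observation of Schlichtkrull and Solberg \cite[Prop.~A.1]{SS} then yields existence of $\scO\mbox{-}\Cat[\we^{-1}]$ together with a descended functor $\bar B:\scO\mbox{-}\Cat[\we^{-1}]\to B\scO\mbox{-}\Top[\we^{-1}]$.

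Next I would construct the candidate inverse. The functorial FV construction of \cite{FV} provides $\scA_\ast:B\scO\mbox{-}\Top\to\scO\mbox{-}\Cat^{\Delta^{\op}}$ together with a natural zigzag of weak equivalences of $B\scO$-spaces connecting $|B\scA_\ast(X)|$ and $X$. Composing with $H_{\cat}$ defines $\scA:=H_{\cat}\circ\scA_\ast$; Theorem \ref{simpl4} supplies a further natural zigzag $B\scA(X)\simeq|B\scA_\ast(X)|$, so altogether $B\scA(X)\simeq X$ naturally in $X$. In particular $\scA$ preserves weak equivalences (by definition of $\we$ in $\scO\mbox{-}\Cat$) and descends to $\bar\scA:B\scO\mbox{-}\Top[\we^{-1}]\to\scO\mbox{-}\Cat[\we^{-1}]$, and the natural zigzag becomes a natural isomorphism $\bar B\bar\scA\cong\Id$, making $\bar B$ essentially surjective.

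For the reverse composition, I would compare $\scA B\scC=H_{\cat}(\scA_\ast(B\scC))$ with $\scC$ by factoring through $H_{\cat}(\const\,\scC)$. Proposition \ref{simpl5} supplies a natural weak equivalence of $\scO$-algebras $H_{\cat}(\const\,\scC)\xrightarrow{\varepsilon\circ\pi}\scC$. Naturality of the FV construction applied to the canonical identification $B\scC=|B(\const\,\scC)|$ produces a natural comparison between $\scA_\ast(B\scC)$ and $\const\,\scC$ which, after $B\circ H_{\cat}$, becomes a natural zigzag of weak equivalences of $B\scO$-spaces. Since $\we$ in $\scO\mbox{-}\Cat$ is reflected by $B$, this descends to a natural isomorphism $\bar\scA\bar B\cong\Id$, completing the equivalence.

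The main obstacle is the last step: producing the natural zigzag between $\scA_\ast(B\scC)$ and $\const\,\scC$ and checking that it descends to a natural isomorphism on the localization. Objectwise these simplicial $\scO$-algebras differ substantially, but they share the same realization under $|B(-)|$ up to natural weak equivalence, and the content of Theorem \ref{simpl4} is precisely that this shared realization controls the homotopy type of $H_{\cat}$ applied to either. Alternatively, once $\bar B$ is known essentially surjective with $\we = B^{-1}(\we)$ and the one-sided natural isomorphism $\bar B\bar\scA\cong\Id$ has been produced, the Schlichtkrull-Solberg machinery should upgrade this directly to the full equivalence, bypassing the explicit comparison.
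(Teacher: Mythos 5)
Your overall strategy --- compose the construction of \cite{FV} with $H_{\cat}$, produce natural zigzags of weak equivalences relating both composites to the identity functors, and feed the result into \cite[Prop.~A.1]{SS} --- is the paper's strategy, but two of your steps do not go through as written. First, the existence of $\scO\mbox{-}\Cat[\we^{-1}]$ does not follow from the single observation that $\we=B^{-1}(\we)$: the Schlichtkrull--Solberg criterion takes as input a functor in the opposite direction together with \emph{both} natural zigzags $B\scA\simeq\Id$ and $\scA B\simeq\Id$, and only then delivers the existence of the source localization alongside the equivalence. So your ``first'' step can only be carried out last; and your fallback at the end --- that essential surjectivity of $\bar B$, the fact that $B$ reflects weak equivalences, and the one-sided isomorphism $\bar B\bar\scA\cong\Id$ already force the full equivalence --- is not valid: reflecting weak equivalences gives no control over fullness of $\bar B$ on localized hom-sets, so the isomorphism $\bar\scA\bar B\cong\Id$ cannot be bypassed.

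Second, the comparison between $\scA_\ast(B\scC)=\widehat{Q}_\bullet(B\scC)$ and $\const\,\scC$ is not a formal consequence of ``naturality of the FV construction applied to $B\scC=|B(\const\,\scC)|$'': the simplicial $\scO$-algebra $\widehat{Q}_\bullet(B\scC)$ is manufactured from the \emph{space} $B\scC$ and carries no a priori map to or from $\scC_\bullet$. This comparison is the substantive input \cite[Lemma~6.6]{FV}, quoted in the paper as a chain of natural weak equivalences in $\scO\mbox{-}\Cat^{\Delta^{\op}}$ joining $\scC_\bullet$ and $\widehat{Q}_\bullet(B\scC)$; applying $H_{\cat}$ (which preserves weak equivalences) and combining with Proposition~\ref{simpl5} then yields $\scA B\simeq\Id$. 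A smaller but real omission on the other side: the zigzag of \cite[Lemma~6.3]{FV} joining $B\widehat{Q}_\bullet(X)$ to the constant simplicial space on $X$ requires the underlying space of $X$ to be a CW complex (and one works in the proper simplicial $B\scO$-spaces, as in Theorem~\ref{simpl4}), which is why the paper inserts the CW-approximation $T=R\circ S$ and defines $F=H_{\cat}\circ\widehat{Q}_\bullet\circ T$; your $\scA=H_{\cat}\circ\scA_\ast$ omits $T$, and the zigzag $B\scA(X)\simeq X$ is then not available for arbitrary $X$.
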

\begin{proof}
 Let $T=R\circ S:\Top\to \Top$ be the standard CW-approximation functor, i.e. the composite of the singular functor $S$ and the topological 
 realization functor, which we denote by $R$ in this proof. 
 Let $X$ be a $B\scO$-space. Then $TX$ is a $B\scO$-space
 and the natural map $TX\to X$ is a weak equivalence of $B\scO$-spaces. To see recall that $B\scO=RN_\ast \scO$.  Consider the diagram
 $$\xymatrix{
 RN_\ast(\scO(n))\times RS(X)^n\ar[rr]^(.48){R\mu N_\ast\scO(n)\times \id}\ar[d]_{\id\times \varepsilon^n} && RSRN_\ast(\scO(n))\times RS(X)^n
 \ar[rr]^(.65){RS\alpha}\ar[dll]^{\varepsilon RN_\ast\scO(n)\times \varepsilon^n} && RS(X)\ar[d]^{\varepsilon} \\
  RN_\ast(\scO(n))\times X^n\ar[rrrr]^{\alpha} &&&& X
  }
  $$
  where $\alpha:RN_\ast(\scO(n))\times X^n\to X$ is a structure map, and $\mu$ and $\varepsilon$ are the unit and counit of the
  adjunction
  $$R:\SSets\leftrightarrows \Top:S.$$
 Since all functors are product preserving the right square commutes by naturality, and the triangle commutes because $\varepsilon R\circ R\mu=\id$.
 
 In \cite[Section 5]{FV} we constructed a functor $\widehat{Q}_\bullet: B\scO\mbox{-}\Top\to \scO\mbox{-}\Cat^{\Delta^{\op}}$ and showed that there is
 a sequence of natural weak equivalences in $\scO\mbox{-}\Cat^{\Delta^{\op}}$ joining $\scC_\bullet$ and $\widehat{Q}_\bullet(B\scC)$, where $\scC$ is
 an $\scO$-algebra and $\scC_\bullet$ is the constant simplicial $\scO$-algebra on $\scC$ \cite[Lemma 6.6]{FV}. Here we call a 
 map $f_\bullet:X_\bullet \to Y_\bullet$ of simplicial $B\scO$-spaces a weak equivalence if its realization $|f_\bullet|:|X_\bullet| \to |Y_\bullet|$
 is a weak equivalence, and a map $g_\bullet: \scA_\bullet  \to \scB_\bullet$ of simplicial $\scO$-algebras a weak equivalence, if
 $B(g_\bullet)$ is a weak equivalence in $B\scO-\Top^{\Delta^{\op}}$. If $X$ is a $B\scO$-space whose underlying space is a CW-complex, we also
 showed that there is a sequence of natural weak equivalences in $B\scO\mbox{-}p\Top^{\Delta^{\op}}$ joining $B\widehat{Q}_\bullet (X)$ 
and the constant simplicial
 $B\scO$-space $X_\bullet$ \cite[Lemma 6.3]{FV}.
 
 We define 
 $$
 F=H_{\cat}\circ \widehat{Q}_\bullet\circ T:B\scO\mbox{-}\Top \to \scO\mbox{-}\Cat.
 $$
 Let $X$ be a $B\scO$-space and $X_\bullet$ the constant simplicial $B\scO$-space on $X$.  By Theorem \ref{simpl4} we have a natural weak equivalence
 $$H_{\top}(TX_\bullet)\to |TX_\bullet|\cong TX\to X.$$
  By Theorem \ref{simpl4} $H_{\top}$ maps weak equivalences in $B\scO\mbox{-}p\Top^{\Delta^{\op}}$ to weak equivalences in $B\scO\mbox{-}\Top$. 
So if we apply $H_{\top}$
  to the second sequence of weak equivalences we obtain a sequence of weak equivalences
 joining $H_{\top}(TX_\bullet)$ and $H_{\top}(B\widehat{Q}_\bullet (TX))$, and, again by Theorem \ref{simpl4}, there is a weak equivalence
 $H_{\top}(B\widehat{Q}_\bullet (TX))\to B(H_{\cat}(\widehat{Q}_\bullet (TX))$. So there is a sequence of natural weak equivalences joining $B\circ F$ and $\Id$.
 
 Let $\scC$ be an $\scO$-algebra. By Proposition \ref{simpl5} there is a weak equivalence $H_{\cat}(\scC_\bullet)\to \scC$, and since $B\scC$ is a CW-complex, the
 natural map $TB\scC\to B\scC$ induces a weak equivalence $\widehat{Q}_\bullet(TB\scC)\to \widehat{Q}_\bullet(B\scC)$. By applying $H_{\cat}$ to the first sequence 
 of weak equivalences above we obtain a sequence of natural weak equivalences joining $H_{\cat}(\scC_\bullet)$ and $H_{\cat}(\widehat{Q}_\bullet(B\scC))$,
 because $H_{\cat}$ maps weak equivalences to weak equivalences. Altogether we obtain a sequence of natural weak equivalences in $\scO\mbox{-}\Cat$ joining
 $F\circ B$ and $\Id$.
 
 Then by \cite[Prop. A.1]{SS}, the existence of the localization $B\scO\mbox{-}\Top[\we^{-1}]$ implies
the existence of the localization $\scO\mbox{-}\Cat[\we^{-1}]$ and the equivalence
 $$
 \scO\mbox{-}\Cat[\we^{-1}]\quad\simeq\quad B\scO\mbox{-}\Top[\we^{-1}].$$

\end{proof}

From Theorem \ref{appl3} we obtain
 the results about iterated loop spaces of \cite[Section 8]{FSV} without referring to the fairly complicated homotopy colimit construction in categories of algebras over $\Sigma$-free
 operads in $\Cat$. We include a short summary of these applications, because we now have statements about 
genuine localizations rather than localizations up to equivalence. For further details, in particular the group completion
functors, see \cite{FSV}.
 
 \begin{leer}\label{appl5} \textbf{Notations:} $\mathcal{B}r$ denotes the operad codifying strict braided monoidal categories, 
 i.e. braided monoidal categories which are strictly associative and have a 
  strict 2-sided unit (recall that any braided monoidal category is equivalent to a strict one).
  
  $\mathcal{M}_n$ denotes the operad codifying $n$-fold monoidal categories, $1\leq n\leq \infty$, introduced in \cite{BFSV}.
  
  $\mathcal{P}erm$ denotes the operad codifying permutative categories.
  
  $\mathcal{C}_n$, $1\leq n\leq \infty$, denotes the little $n$-cubes operad.
  \end{leer}

 \begin{theo}\label{appl6} The composites of the classifying space functors and the change of operads functors induce equivalences of categories
  $$\begin{array}{rcccl}
   \mathcal{M}_n\mbox{-}\Cat[\we^{-1}] & \simeq & B\mathcal{M}_n\mbox{-}\Top[\we^{-1}]  & 
\simeq &  \mathcal{C}_n\mbox{-}\Top[\we^{-1}],\quad 1\leq n\leq \infty\\
   \mathcal{B}r\mbox{-}\Cat[\we^{-1}] & \simeq & B\mathcal{B}r\mbox{-}\Top[\we^{-1}]  & \simeq &  \mathcal{C}_2\mbox{-}\Top[\we^{-1}]\\
   \mathcal{P}erm\mbox{-}\Cat[\we^{-1}] & \simeq & B\mathcal{P}erm\mbox{-}\Top[\we^{-1}]  & \simeq &  \mathcal{C}_\infty\mbox{-}\Top[\we^{-1}]
    \end{array}
$$
 \end{theo}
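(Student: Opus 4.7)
The plan is to treat the three chains uniformly. In each case the first equivalence is a direct application of Theorem \ref{appl3} to the relevant $\Sigma$-free operad in $\Cat$. Since $\mathcal{M}_n$, $\mathcal{B}r$, and $\mathcal{P}erm$ are all $\Sigma$-free operads in $\Cat$ (as shown in \cite{BFSV} for $\mathcal{M}_n$, with the other two being standard), Theorem \ref{appl3} immediately gives
\[
\mathcal{M}_n\mbox{-}\Cat[\we^{-1}]\simeq B\mathcal{M}_n\mbox{-}\Top[\we^{-1}],\quad
\mathcal{B}r\mbox{-}\Cat[\we^{-1}]\simeq B\mathcal{B}r\mbox{-}\Top[\we^{-1}],\quad
\mathcal{P}erm\mbox{-}\Cat[\we^{-1}]\simeq B\mathcal{P}erm\mbox{-}\Top[\we^{-1}].
\]
So the real content lies in the second equivalence in each line, where one must transport algebras across a change of operads in $\Top$.

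For the second equivalence, the idea is to invoke the known zigzags of weak equivalences of $\Sigma$-free topological operads, namely $B\mathcal{M}_n\leftarrow\mathcal{D}_n\rightarrow\mathcal{C}_n$ from \cite{BFSV} and its analogs for $\mathcal{B}r\rightsquigarrow\mathcal{C}_2$ and $\mathcal{P}erm\rightsquigarrow\mathcal{C}_\infty$. For any such weak equivalence $\varphi\colon\scO\to\scP$ of $\Sigma$-free topological operads, I will combine the restriction functor $\varphi^\ast\colon\scP\mbox{-}\Top\to\scO\mbox{-}\Top$ (which evidently preserves weak equivalences) with the rectification-based change-of-operads functor $\Phi_\varphi(X)=\hocolim_{\scI}F^{\widehat{X}}_\varphi$ from Proposition \ref{change4}. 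That proposition supplies natural zigzags
\[
X\longleftarrow\hocolim_{\scI}F^{\widehat{X}}\longrightarrow\Phi_\varphi(X)
\]
of weak equivalences showing that $\varphi^\ast\Phi_\varphi\simeq\Id$ up to a natural zigzag, and a symmetric argument applied to $\Phi_\varphi\varphi^\ast$ yields the reverse homotopy. Together with the Schlichtkrull–Solberg principle \cite[Prop.~A.1]{SS}, which we already used to produce $\scO\mbox{-}\Cat[\we^{-1}]$ from $B\scO\mbox{-}\Top[\we^{-1}]$ in the proof of Theorem \ref{appl3}, this promotes each individual weak equivalence of operads to an equivalence of localized categories of algebras. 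Composing these across the two-stage zigzag then yields the desired $\mathcal{C}_n$-, $\mathcal{C}_2$-, and $\mathcal{C}_\infty$-equivalences.

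The main obstacle I anticipate is bookkeeping around the intermediate operads $\mathcal{D}_n$. One must verify (or cite from \cite{BFSV}) that they are $\Sigma$-free in $\Top$ so that Proposition \ref{change4} applies to each leg of the zigzag, and that the little-cubes operads $\mathcal{C}_n$ satisfy the $\Top$ version of $\Sigma$-freeness, i.e.\ that $\mathcal{C}_n(k)\to\mathcal{C}_n(k)/\Sigma_k$ is a numerable principal bundle; this is standard since the $\mathcal{C}_n(k)$ are free $\Sigma_k$-CW complexes. A secondary technical point is that Proposition \ref{change4} only produces natural weak equivalences rather than a genuine inverse, so one must carefully package these zigzags into data fulfilling the hypotheses of \cite[Prop.~A.1]{SS}; this is purely formal, paralleling the argument already given in Theorem \ref{appl3}. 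Once these are in place, the three statements follow by concatenating the two equivalences in each line.
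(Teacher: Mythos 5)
Your proposal matches the paper's intended argument: the first equivalence in each line is exactly Theorem \ref{appl3}, and the second comes from the change-of-operads construction of Section \ref{change} applied to the operad zigzags $B\scM_n\leftarrow\scD\rightarrow\scC_n$ (and the analogues for $\mathcal{B}r$ and $\mathcal{P}erm$), packaged via \cite[Prop.~A.1]{SS}; the paper gives no further detail here, deferring to \cite[Section 8]{FSV}. One step you should not wave through as ``symmetric'': Proposition \ref{change4}(2) produces a zigzag of weak equivalences of $\scO$-spaces whose middle term $\hocolim_{\scI}\ F^{\widehat{\varphi^\ast Y}}$ is only an $\scO$-algebra, so that zigzag cannot by itself witness $\Phi_\varphi\varphi^\ast\simeq\Id$ in $\scP\mbox{-}\Top[\we^{-1}]$. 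You need the additional observation that for a $\scP$-space $Y$ the evaluation $\varepsilon_\varphi:\hocolim_{\scI}\ F^{\widehat{\varphi^\ast Y}}_\varphi\to Y$ is a homomorphism of $\scP$-algebras, and that it is a weak equivalence of underlying spaces by two-out-of-three applied to the $\scO$-level zigzag; with that in place, together with the $\Sigma$-freeness checks you already flag for $\scD$ and $\scC_n$, the rest of your argument goes through.
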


 It is well known that the group completion of a $\mathcal{C}_n$-space is an $n$-fold loop space for $1\leq n\leq \infty$. 
Let $\Omega^n\mbox{-}\Top$ denote the category
 of $n$-fold loop spaces and $n$-fold loop maps. A \textit{weak equivalence} in  $\Omega^n\mbox{-}\Top$ is an $n$-fold
 loop map whose underlying map is a weak homotopy equivalence, or equivalently if the
May delooping \cite{May1} of the $n$-fold loop map is an equivalence.  Again by \cite[Prop. A.1]{SS}
the localization with respect to these weak equivalences exists. Let $\we_g$ denote the classes
 of morphisms in $\mathcal{B}r\mbox{-}\Cat,\ \mathcal{M}_n\mbox{-}\Cat$, and $\mathcal{P}erm\mbox{-}\Cat$ which are 
mapped to weak equivalences by the composites of the
 classifying space functors, the change of operads functors, and the group completion functors. The localizations with
 respect to these weak equivalences exist by the same argument and we have:
 
 \begin{theo}\label{appl7}
  The composites of the
 classifying space functors, the change of operads functors, and the group completion functors induce equivalences of categories
 $$\begin{array}{rcl}
  \mathcal{M}_n\mbox{-}\Cat[{\we_g}^{-1}] & \simeq &  {\Omega^n\Top}[\we^{-1}], \qquad 1\leq n\leq \infty\\
  \mathcal{B}r\mbox{-}\Cat[{\we_g}^{-1}] & \simeq &{\Omega^2\Top}[\we^{-1}]\\
  \mathcal{P}erm\mbox{-}\Cat[{\we_g}^{-1}] & \simeq & {\Omega^\infty\Top}[\we^{-1}].
   \end{array}
$$
 \end{theo}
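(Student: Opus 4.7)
The plan is to deduce Theorem \ref{appl7} from Theorem \ref{appl6} by further localizing at a larger class of morphisms corresponding to group completion, and then identifying the resulting localization of $\mathcal{C}_n$-spaces with $n$-fold loop spaces via the May recognition principle. The argument runs in three stages.

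First, I would record the effect of the group completion functor $\Gamma : \mathcal{C}_n\mbox{-}\Top \to \Omega^n\mbox{-}\Top$, which sends a $\mathcal{C}_n$-space $X$ to $\Omega^n B^n X$ (via the May delooping). By the very definition of $\we_g$, together with the standard fact that any map of $\mathcal{C}_n$-spaces $f$ is a $\we_g$-equivalence iff $\Omega^n B^n f$ is a weak homotopy equivalence, $\Gamma$ sends $\we_g$ to $\we$ and hence descends to $\bar\Gamma : \mathcal{C}_n\mbox{-}\Top[\we_g^{-1}] \to \Omega^n\mbox{-}\Top[\we^{-1}]$. In the opposite direction, any $n$-fold loop space is canonically a $\mathcal{C}_n$-space, giving a functor $\iota : \Omega^n\mbox{-}\Top \to \mathcal{C}_n\mbox{-}\Top$. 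For a grouplike $\mathcal{C}_n$-space $Y = \Omega^n Z$ the natural map $Y \to \Omega^n B^n Y$ is a weak equivalence (the recognition theorem \cite{May1}), while for a general $\mathcal{C}_n$-space $X$ the map $X \to \Omega^n B^n X$ is a $\we_g$-equivalence by definition. Hence $\bar\Gamma\circ\iota \simeq \Id$ and $\iota\circ\bar\Gamma \simeq \Id$ in the respective localizations, yielding an equivalence $\mathcal{C}_n\mbox{-}\Top[\we_g^{-1}] \simeq \Omega^n\mbox{-}\Top[\we^{-1}]$.

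Second, I would combine this with Theorem \ref{appl6}. By construction, the class $\we_g$ on $\mathcal{M}_n\mbox{-}\Cat$ is exactly the preimage under the composite (classifying space + change of operads + group completion) of $\we$ in $\Omega^n\mbox{-}\Top$; under the equivalence of Theorem \ref{appl6}, this class corresponds precisely to $\we_g$ on $\mathcal{C}_n\mbox{-}\Top$. Since $\we \subseteq \we_g$ on both sides, the equivalence passes to a further localization, giving $\mathcal{M}_n\mbox{-}\Cat[\we_g^{-1}] \simeq \mathcal{C}_n\mbox{-}\Top[\we_g^{-1}]$. Chaining with the equivalence from the first stage produces the desired equivalence for $\mathcal{M}_n$, and the cases of $\mathcal{B}r$ ($n=2$) and $\mathcal{P}erm$ ($n=\infty$) follow identically.

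Finally, one must justify the existence of the localization $\mathcal{M}_n\mbox{-}\Cat[\we_g^{-1}]$ (and its $\mathcal{B}r$, $\mathcal{P}erm$ counterparts). This is the main technical obstacle, and it is handled as in the proof of Theorem \ref{appl3}: one applies \cite[Prop.~A.1]{SS} to the composite functor $F_g = \Gamma \circ B \circ \text{(change of operads)} : \mathcal{M}_n\mbox{-}\Cat \to \Omega^n\mbox{-}\Top$, whose target has an existing localization by \cite[Theorem B]{SV2}. The chain of natural weak equivalences $F\circ B \simeq \Id$ and $B\circ F \simeq \Id$ constructed in Theorem \ref{appl3}—coming from $H_{\cat}$, the Bousfield–Kan and Thomason maps, and $\widehat{Q}_\bullet$—remains a chain of $\we_g$-equivalences upon composing with $\Gamma$, because $\Gamma$ preserves weak equivalences of $B\mathcal{M}_n$-spaces. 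This verifies the Schlichtkrull–Solberg criterion and simultaneously gives the equivalence of categories asserted in the theorem.
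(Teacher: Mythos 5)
Your proposal is correct and follows essentially the route the paper intends: the paper gives no explicit proof of Theorem \ref{appl7}, merely asserting in the preceding paragraph that the localizations $\mathcal{M}_n\mbox{-}\Cat[{\we_g}^{-1}]$ etc.\ exist by the same appeal to \cite[Prop.~A.1]{SS} and deferring the details (in particular the group completion functors) to \cite[Section 8]{FSV}. Your three stages --- the recognition-principle equivalence $\mathcal{C}_n\mbox{-}\Top[{\we_g}^{-1}]\simeq\Omega^n\mbox{-}\Top[\we^{-1}]$, the further localization of Theorem \ref{appl6}, and the Schlichtkrull--Solberg criterion for existence --- reconstruct exactly that intended argument.
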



%
%
%
%

\end{document}